\newtheorem{lemma}{Lemma}
\newtheorem{teo}[lemma]{Theorem}
\newtheorem{prop}[lemma]{Proposition}
\newtheorem{cor}[lemma]{Corollary}
\theoremstyle{definition}
\newtheorem{example}[lemma]{Example}
\theoremstyle{remark}
\newtheorem{rem}[lemma]{Remark}
\newcommand{\Iso}{{\rm Isom}}
\newcommand{\matX}{\ensuremath {\mathbb{X}}}
\newcommand{\matR} {\ensuremath {\mathbb{R}}}
\newcommand{\matZ} {\ensuremath {\mathbb{Z}}}
\newcommand{\matH} {\ensuremath {\mathbb{H}}}
\newcommand{\matS} {\ensuremath {\mathbb{S}}}
\newcommand{\Vol} {\ensuremath {{\rm Vol}}}
\newcommand{\Isom} {\ensuremath {{\rm Isom}}}
\newcommand{\lk} {\ensuremath {{\rm link}}}
\author{Giovanni Italiano}
\address{Scuola Normale Superiore, Piazza dei Cavalieri, 7, 56126 Pisa, Italy}
\email{giovanni dot italiano at sns dot it}
\author{Bruno Martelli}
\address{Dipartimento di Matematica, Largo Pontecorvo 5, 56127 Pisa, Italy}
\email{bruno dot martelli at unipi dot it}
\author{Matteo Migliorini}
\address{Scuola Normale Superiore, Piazza dei Cavalieri, 7, 56126 Pisa, Italy}
\email{matteo dot migliorini at sns dot it}
\title[Hyperbolic manifolds that fiber algebraically]{Hyperbolic manifolds that fiber algebraically \\ up to dimension 8}
\begin{document}

\begin{abstract}
We construct some cusped finite-volume hyperbolic $n$-manifolds $M^n$ that fiber algebraically in all the dimensions $5\leq n \leq 8$. That is, there is a surjective homomorphism $\pi_1(M^n) \to \matZ$ with finitely generated kernel. 

The kernel is also finitely presented in the dimensions $n=7, 8$, and this leads to the first examples of hyperbolic $n$-manifolds $\widetilde M^n$ whose fundamental group is finitely presented but not of finite type. These $n$-manifolds $\widetilde M^n$ have infinitely many cusps of maximal rank and hence infinite Betti number $b_{n-1}$. They cover the finite-volume manifold $M^n$.

We obtain these examples by assigning some appropriate \emph{colours} and \emph{states} to a family of right-angled hyperbolic polytopes $P^5, \ldots, P^8$, and then applying some arguments of Jankiewicz -- Norin -- Wise \cite{JNW} and Bestvina -- Brady \cite{BB}. We exploit in an essential way the remarkable properties of the Gosset polytopes dual to $P^n$, and the algebra of integral octonions for the crucial dimensions $n=7,8$.
\end{abstract}

\maketitle

\section*{Introduction} \label{introduction:section}

We prove here the following theorem. Every hyperbolic manifold in this paper is tacitly assumed to be connected, complete, and orientable. 

\begin{teo} \label{main:teo}
In every dimension $5 \leq n \leq 8$ there are a finite volume hyperbolic $n$-manifold $M^n$ and a map $f \colon M^n \to S^1$ such that $f_* \colon \pi_1(M^n) \to \matZ$ is surjective with finitely generated kernel. The cover $\widetilde M^n = \matH^n/{\ker f_*}$ has infinitely many cusps of maximal rank.
When $n=7,8$ the kernel is also finitely presented. 
\end{teo}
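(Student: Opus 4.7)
The plan is to construct each $M^n$ as a finite cover of the right-angled reflection orbifold on a suitable hyperbolic polytope $P^n$, and then to produce the circle-valued map $f$ via the Bestvina--Brady--Jankiewicz--Norin--Wise machinery. First I would fix the polytope $P^n$ for $5 \leq n \leq 8$: as hinted in the abstract, the natural choices are the right-angled hyperbolic polytopes dual to the Gosset polytopes, because these carry the large symmetry groups associated to the Weyl groups of type $D_5$, $E_6$, $E_7$, $E_8$, and this symmetry is essential to the combinatorial arguments that follow. A \emph{colouring} of its facets (an assignment of a colour to each facet so that adjacent facets receive distinct colours) then determines a finite manifold cover of the Coxeter orbifold, which is the candidate $M^n$.

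Next, to produce $f$, I would assign a \emph{state} (sign) to each colour and, following Jankiewicz--Norin--Wise, define a circle-valued combinatorial Morse function on the CAT(0) cube complex associated to the coloured polytope, on which $\pi_1(M^n)$ acts. At each vertex of this cube complex the choice of states carves out an \emph{ascending} and a \emph{descending} link inside the full link of the vertex; the latter is naturally identified with a subcomplex of the boundary of the Gosset polytope dual to $P^n$. Bestvina--Brady theory then asserts that if every ascending and descending link is non-empty and connected, $\ker f_*$ is finitely generated, and if they are moreover simply connected, $\ker f_*$ is finitely presented.

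The main obstacle is therefore the combinatorial problem of exhibiting colours and states whose ascending and descending links achieve the required connectivity at every vertex. For $n = 5, 6$ one should be able to attain mere connectivity by direct inspection of the relevant subcomplexes of the Gosset duals, exploiting the $D_5$- or $E_6$-symmetry of a well-chosen colouring. For $n = 7, 8$ the additional requirement of simple connectedness is substantially more delicate: the relevant subcomplexes of the $E_7$- and $E_8$-Gosset polytopes are far larger and combinatorially richer. Here I would exploit the identification of the $E_7$ and $E_8$ root systems with subsets of the integral octonions $\mathbb O$, which endows the Gosset polytopes with an additional multiplicative structure useful for moving loops around and certifying simple connectedness; one then searches for states that are compatible with this octonionic action at every vertex simultaneously.

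Finally, to see that $\widetilde M^n = \matH^n / \ker f_*$ has infinitely many cusps of maximal rank, I would impose the further constraint on the states that $f_*$ be trivial on every maximal parabolic subgroup $P \cong \matZ^{n-1}$ of $\pi_1(M^n)$. With this condition $P$ is contained in $\ker f_*$, and since $\ker f_*$ has infinite index each cusp of $M^n$ lifts to infinitely many cusps of $\widetilde M^n$, each of full rank $n-1$, thereby forcing $b_{n-1}(\widetilde M^n) = \infty$. These parabolic triviality conditions reduce to a finite set of linear constraints on the states at facets incident to each ideal vertex of $P^n$, to be satisfied jointly with the connectivity conditions above; verifying that such a compatible choice exists in all four dimensions is where the full force of the octonionic arithmetic is expected to be needed.
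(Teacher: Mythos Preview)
Your overall architecture---polytopes $P^n$ dual to Gosset polytopes, colourings to build $M^n$, states to build $f$, Bestvina--Brady to control $\ker f_*$---matches the paper exactly. But three concrete points in your plan are off and would derail the argument.

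\textbf{States are on facets, not on colours.} You write that you would ``assign a state (sign) to each colour''. In the Jankiewicz--Norin--Wise setup a state is an I/O assignment to each \emph{facet}; the colours only determine the $\matZ_2^c$-action that generates the orbit of states. If facets sharing a colour are forced to share a status, the paper's Proposition~\ref{dichotomy:prop} shows the resulting diagonal map is null-homotopic and $f_*$ is trivial. The entire point is that facets of the same colour receive \emph{different} statuses (the paper's states for $P^3,P^4,P^5,P^7$ are ``balanced'': within each colour class, half the facets are I and half are O). One then has to verify connectivity for \emph{every} state in the $\matZ_2^c$-orbit, not just the initial one.

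\textbf{The cusp conclusion is automatic, not an extra constraint.} You propose to impose that $f_*$ vanish on \emph{every} maximal parabolic. The paper neither needs nor achieves this: one cusp on which $f$ is null-homotopic suffices, and Proposition~\ref{null:prop} shows that for $5\le n\le 8$ such a cusp exists for \emph{any} state, because some ideal vertex of $P^n$ has all incident facets of distinct colours (so Corollary~\ref{trivial:cor} applies to that cusp's cube). In the paper's actual construction, $f_*$ is nontrivial on other cusps, which lift to cusps of rank $n-2$; demanding triviality on all parabolics is an unnecessary and possibly incompatible constraint.

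\textbf{Octonions organise the construction; they do not certify simple connectivity.} For $n=7,8$ the integral octonions are used to \emph{define} highly symmetric colourings (15 hextets for $P^8$, 14 quartets for $P^7$) and a symmetric initial state via left-multiplication by the quaternion subgroup $\{\pm 1,\pm e_1,\pm e_2,\pm e_4\}$. This symmetry collapses the $2^{15}$ or $2^{14}$ states in the orbit to $185$ (resp.\ $106$) isomorphism classes of links, which are then checked to be simply connected by computer. The octonionic structure is not used to ``move loops around'' in the links themselves.
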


We deduce in particular the following.

\begin{cor} \label{main:cor}
In dimension $n=7,8$ there is a hyperbolic $n$-manifold with finitely presented fundamental group and infinitely many cusps of maximal rank. The manifold covers a finite-volume hyperbolic manifold.
\end{cor}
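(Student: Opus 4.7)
The plan is to read the corollary as a direct specialization of Theorem \ref{main:teo} in the top two dimensions. First I would apply the theorem for $n=7,8$ to produce a finite-volume hyperbolic manifold $M^n$ and a map $f \colon M^n \to S^1$ whose induced homomorphism $f_* \colon \pi_1(M^n) \to \matZ$ is surjective with finitely generated kernel that, crucially in these two dimensions, is also finitely presented. The candidate manifold for the corollary is $\widetilde M^n = \matH^n / \ker f_*$.

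Next I would verify the three properties required by the statement. Its fundamental group is $\ker f_*$, which is finitely presented by the last sentence of the theorem. The existence of infinitely many cusps of maximal rank is asserted directly by the theorem. The covering $\widetilde M^n \to M^n$ follows at once from the inclusion $\ker f_* \subset \pi_1(M^n)$ of torsion-free lattices in $\Isom(\matH^n)$, and $M^n$ is of finite volume by construction. These observations together give exactly the manifold claimed by the corollary.

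There is no real obstacle in this deduction: the corollary is essentially a restatement of Theorem \ref{main:teo} under the restriction $n \in \{7,8\}$, packaged so as to highlight the novelty of having a finitely presented fundamental group coexisting with infinitely many maximal-rank cusps. All the substantive work — constructing $M^n$, assigning colours and states to the right-angled polytope $P^n$, and invoking the Bestvina--Brady \cite{BB} and Jankiewicz--Norin--Wise \cite{JNW} machinery to upgrade the finite generation of $\ker f_*$ to finite presentation precisely when $n=7,8$ — is contained in the proof of the theorem itself, which occupies the remainder of the paper.
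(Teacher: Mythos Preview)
Your proposal is correct and matches the paper's approach exactly: the paper gives no separate proof for this corollary, treating it as an immediate consequence of Theorem~\ref{main:teo} by taking $\widetilde M^n = \matH^n/\ker f_*$ for $n=7,8$. Your identification of the three required properties (finitely presented $\pi_1$, infinitely many maximal-rank cusps, covering a finite-volume manifold) and how each follows from the theorem is precisely the intended reading.
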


The same assertion holds in the dimensions $n=5,6$ with ``finitely generated'' replacing ``finitely presented''.

For every $5\leq n \leq 8$ the group $\pi_1(M^n)$ is a finite-index subgroup of the arithmetic lattice ${\rm O}(n,1,\matZ)$.  
Recall that a group $\Gamma$ is \emph{of finite type}, denoted ${\rm F}$, if it has a finite classifying space, and \emph{of type ${\rm F}_m$} if it has a classifying space with finite $m$-skeleton. When $m=1$ or 2, being of type ${\rm F}_m$ is equivalent to $\Gamma$ being finitely generated or finitely presented, respectively.

\begin{cor} \label{alg:cor}
In dimension $n=7,8$ the lattice ${\rm O}(n,1,\matZ)$ contains a finitely presented subgroup $\Gamma$ without torsion and with infinite Betti number $b_{n-1}(\Gamma)$. In particular $\Gamma$ is ${\rm F}_2$ but not ${\rm F}_{n-1}$.
\end{cor}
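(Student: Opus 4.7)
The plan is to take $\Gamma = \ker f_*$ provided by Theorem \ref{main:teo} in the cases $n=7,8$ and verify each property in turn. Since $\pi_1(M^n)$ is a finite-index subgroup of $\mathrm{O}(n,1,\matZ)$ as recalled above, $\Gamma$ sits inside $\mathrm{O}(n,1,\matZ)$. Torsion-freeness is immediate: $\widetilde M^n = \matH^n/\Gamma$ is a manifold, so $\Gamma$ acts freely on $\matH^n$. Theorem \ref{main:teo} guarantees that $\Gamma$ is finitely presented when $n=7,8$, which is precisely the condition $\mathrm{F}_2$.

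The substantive point is to establish $b_{n-1}(\Gamma)=\infty$. Since $\matH^n$ is contractible, $\widetilde M^n$ is a $K(\Gamma,1)$ and $H_*(\Gamma)=H_*(\widetilde M^n)$. I would truncate each of the infinitely many maximal-rank cusps of $\widetilde M^n$ by removing an open horoball neighborhood, obtaining a manifold $N$ homotopy equivalent to $\widetilde M^n$ whose boundary is a disjoint union $\partial N=\bigsqcup_{i\in\matN}\Sigma_i$ of closed, connected, orientable flat $(n-1)$-manifolds — orientability inherited from $\widetilde M^n$, and closedness together with connectedness from the maximality of the cusp rank.

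The key observation is that $N$ is connected and non-compact. By Lefschetz duality, $H_n(N,\partial N)\cong H^0_c(\interior{N})=0$, where the latter vanishes because a locally constant, compactly supported function on a connected non-compact space must be zero. The long exact sequence of the pair $(N,\partial N)$ then yields an injection
\[
\bigoplus_{i\in\matN}\matZ \;=\; \bigoplus_{i\in\matN} H_{n-1}(\Sigma_i) \;=\; H_{n-1}(\partial N)\;\hookrightarrow\; H_{n-1}(N)\;=\;H_{n-1}(\Gamma),
\]
so $H_{n-1}(\Gamma)$ contains an infinitely generated free abelian subgroup, and in particular $b_{n-1}(\Gamma)=\infty$.

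Finally, if $\Gamma$ were of type $\mathrm{F}_{n-1}$, it would admit a classifying space with finite $(n-1)$-skeleton, forcing $H_i(\Gamma)$ to be finitely generated for every $i\leq n-1$; this is incompatible with $b_{n-1}(\Gamma)=\infty$. Hence $\Gamma$ is $\mathrm{F}_2$ but not $\mathrm{F}_{n-1}$, as claimed. I anticipate no serious obstacle: the only point requiring mild care is verifying that Lefschetz duality applies to the non-compact manifold $N$ with non-compact boundary $\partial N$, but once that is in hand the vanishing $H^0_c(\interior{N})=0$ is elementary.
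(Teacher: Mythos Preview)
Your proof is correct and takes essentially the same approach as the paper: set $\Gamma=\pi_1(\widetilde M^n)=\ker f_*$, note that it sits torsion-free inside $\mathrm{O}(n,1,\matZ)$ and is finitely presented by Theorem~\ref{main:teo}, and deduce $b_{n-1}(\Gamma)=\infty$ from the infinitely many maximal-rank cusps. The paper compresses this last step into a single line (``homeomorphic to the interior of a manifold with infinitely many compact boundary components and hence has infinite Betti number $b_{n-1}$''), whereas you supply the underlying long-exact-sequence justification; one small quibble is that the standard non-compact Poincar\'e--Lefschetz duality reads $H_n(N,\partial N)\cong H^0_c(N)$ rather than $H^0_c(\interior{N})$, but both groups vanish here since $N$ is connected and non-compact, so your conclusion is unaffected.
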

\begin{proof}
Pick $\Gamma = \pi_1(\widetilde M^n) < \pi_1(M^n) < {\rm O}(n,1,\matZ)$. Since $\widetilde M^n$ has infinitely many cusps of maximal rank, it is homeomorphic to the interior of a manifold with infinitely many compact boundary components and hence has infinite Betti number $b_{n-1}(\widetilde M^n) = b_{n-1}(\Gamma)$.
\end{proof}

For every $5\leq n \leq 8$, all the finitely many cusps of $M^n$ are \emph{toric}, that is diffeomorphic to $T^{n-1}\times [0,+\infty)$, where we use $T^m$ to denote the $m$-torus. The cover $\widetilde M^n$ has infinitely many toric cusps, and finitely many cusps of rank $n-2$, each diffeomorphic to $T^{n-2} \times \matR \times [0, +\infty)$.

The manifolds $M^n$ and the maps $f$ are constructed explicitly and combinatorially, so some topological invariants may be calculated. The Euler characteristic, Betti numbers, and number of cusps of $M^n$ are listed in Table \ref{Mn:intro:table}. 

\begin{table}
\begin{center}
\begin{tabular}{c||c|ccccccc|c}
        & Euler & $b_1$ & $b_2$ & $b_3$ & $b_4$ & $b_5$ & $b_6$ & $b_7$ & Cusps\\
 \hline \hline
$M^5$ & 0 & 24 & 120 & 136 & 39 & 0 & 0 & 0 & 40  \\
$M^6$ & $-64$ & 18 & 183 & 411 & 207 & 26
 & 0 & 0 & 27 \\
$M^7$ & 0 & 
$182$ & $6321$ & $41300$ & $55139$ & $24010$ & $4031$ & 0 & 4032 \\
$M^8$ & $278528$ & 
\!365\! & \!33670\! & \!583290\! & \!1783226\! & \!1346030\! & \!456595\! & \!65279\! & \!65280\!
\end{tabular}
\vspace{.5 cm}
\caption{The Euler characteristic, Betti numbers, and the number of cusps of each hyperbolic $n$-manifold $M^n$. Each cusp of $M^n$ is toric, that is diffeomorphic to $T^{n-1} \times [0, +\infty)$.}
\label{Mn:intro:table}
\end{center}
\end{table}

\subsection*{Outline of the construction}
We use as building blocks a remarkable sequence of finite-volume right-angled polytopes $P^n \subset \matH^n$, defined for $3\leq n \leq 8$. The reflection group of $P^n$ is a finite-index subgroup of the integral lattice ${\rm O}(n, 1, \matZ)$. The polytope $P^n$ has both ideal and real vertices.

These polytopes made their first appearance in a paper of Agol, Long, and Reid \cite{ALR}. Their combinatorics was then described by Potyagailo and Vinberg \cite{PV}, and more information was later collected by Everitt, Ratcliffe, and Tschantz \cite{ERT}, who noticed in particular that $P^3, \ldots, P^8$ are combinatorially dual to the Euclidean \emph{Gosset polytopes} \cite{G} discovered by Gosset in 1900 and usually indicated with the symbols $-1_{21}, 0_{21}, \ldots, 4_{21}$. 

The Gosset polytopes form indeed a remarkable family of semi-regular polytopes. The 1-skeleton of $2_{21}$ is the configuration graph of the 27 lines in a generic cubic \cite{C}, while $3_{21}, 4_{21}$ are intimately connected with the integral octonions and the $E_8$ lattice. It has been a great pleasure to study these beautiful objects for this project.

The hyperbolic manifold $M^n$ is constructed by assembling some copies of $P^n$ by means of a suitable \emph{colouring} of its facets. This is a standard procedure that works with any right-angled polytope and was used (with a different language) by L\"obell in 1930 with the right-angled dodecahedron to build the first compact hyperbolic 3-manifold, see \cite{V}. For our purposes here it is important to find a colouring with few colours and many symmetries. Given the remarkable properties of the dual Gosset polytopes, it is natural to guess that some nice symmetric colourings for $P^n$ should exist, and we show here that this is indeed the case. In dimension $n=7,8$ we derive a natural colouring from the algebraic properties of the integral octonions.

Having constructed $M^n$, we build a map $f\colon M^n \to S^1$ by choosing an appropriate \emph{state} for $P^n$, that is a partition of its facets into two sets In and Out. A state defines a \emph{diagonal map} $f\colon M^n \to S^1$, as explained by Jankiewicz, Norin, and Wise \cite{JNW}. The homomorphism $f_*\colon \pi_1(M^n) \to \matZ$ is often non-trivial, and its kernel may be studied through Bestvina -- Brady theory of combinatorial Morse functions \cite{BB}. This fundamental paper furnishes in particular some conditions that, when satisfied, guarantee that $\ker f_*$ is finitely generated or, even better, finitely presented. The conditions are the following: if some simplicial complexes called \emph{ascending} and \emph{descending links} are all connected (respectively, simply connected), then the kernel is finitely generated (respectively, finitely presented).

The choice of an appropriate state for $P^n$ is crucial here, and we have used again the exceptional properties of the dual Gosset polytope, and of the integral octonions for $n=7,8$, to select a particularly symmetric state for which the above-mentioned conditions are satisfied. We took inspiration from a quaternions-generated state for the 24-cell that worked very well in \cite{BM} to design a similar octonions-generated state for $P^7$ and $P^8$ here.

After choosing the states, the conditions on the ascending and descending links have been verified by hand in the lower dimensions $n=3,4,5$ and with a computer code written in Sage in the higher dimensions $n=6,7,8$. The code may be downloaded from \cite{code}. 
The symmetries of the polytopes, of the colourings, and of the states have reduced considerably the computations involved, to keep them within few hours of CPU time. Without all these exceptional symmetries, not only the proof of Theorem \ref{main:teo}, but even the more straightforward calculation of the Betti numbers of $M^n$ would have been problematic, especially in the higher dimensions $n=7,8$ where the combinatorics is highly not trivial, as one can guess by looking at the size of the numbers in Table \ref{Mn:intro:table}. To the best of our knowledge the manifolds $M^7$ and $M^8$ are the first finite-volume hyperbolic manifolds in dimension $n\geq 7$ for which the Betti numbers have been computed. Some notable examples exist in the literature in dimension 5 and 6, see \cite{RT5, ERT}. 

The cover $\widetilde M^n = \matH^n/{\ker f_*}$ has a finitely generated fundamental group, and also a finitely presented one for $n=7,8$. It has infinitely many cusps for all $5\leq n \leq 8$ because $f$ is homotopically trivial on some cusp of $M^n$, which therefore lifts to infinitely many copies of itself in $\widetilde M^n$.

\subsection*{Related work}
We briefly discuss some works related to the present paper.

\subsubsection*{Coherence} The fundamental group of a hyperbolic $3$-manifold $M$ satisfies a number of nice properties, see \cite{AFW} for a widely comprehensive discussion. In particular, Scott proved \cite{Sc} that $\pi_1(M)$ is \emph{coherent}: every finitely generated subgroup is also finitely presented. 

This is not the case in higher dimensions, as first experienced by Kapovich and Potyagailo who constructed \cite{KP} in 1991 a geometrically finite hyperbolic 4-manifold with non-coherent fundamental group, see also \cite{P, KP2}. A compact example was then built by Bowditch and Mess \cite{BoMe} in 1994. Later on, Kapovich, Potyagailo, and Vinberg \cite{KPV} proved non-coherence for every non-uniform arithmetic lattice in $\Iso(\matH^n)$ with $n\geq 6$, and then Kapovich \cite{Kn} for every arithmetic hyperbolic lattice in dimension $n\geq 5, n\neq 7$. He also conjectured in \cite{Kn} that every hyperbolic lattice in dimension $n\geq 4$ should be non-coherent.

Corollaries \ref{main:cor} and \ref{alg:cor} describe an even wilder situation: in dimension $n=7,8$ there are finite-volume hyperbolic $n$-manifolds whose fundamental group contains subgroups that are ${\rm F}_2$ but not ${\rm F}_{n-1}$. The first example of a group that is ${\rm F}_2$ but not ${\rm F}_m$ for some $m\geq 3$ was provided by Stallings \cite{St}. It would be interesting to know if such subgroups may also occur in the intermediate dimensions $n=4,5,6$.

\subsubsection*{Algebraic fibrations}
Theorem \ref{main:teo} furnishes some explicit examples of algebraically fibering fundamental groups of hyperbolic manifolds.
We recall that a group $G$ \emph{fibers algebraically} if there is a surjective homomorphism $G \to \matZ$ with finitely generated kernel. 

When $G= \pi_1(M)$ is the fundamental group of a 3-manifold, by a well-known theorem of Stallings \cite{St} this condition is equivalent to the existence of a fibration $M\to S^1$. In higher dimensions this is false in general, and the first examples of algebraic fibrations on hyperbolic $n$-manifolds have appeared recently in dimension $n=4$ in \cite{AS, JNW}. The paper \cite{JNW} is the main inspiration for our work. In \cite{AS} the algebraic fibration is obtained by constructing a RFRS tower and then applying a recent general theorem of Kielak \cite{Kie} that transforms the RFRS property into an algebraic fibration (under some hypothesis). 

\subsubsection*{Perfect circle-valued Morse functions}
In dimension 4 the algebraic fibration can sometimes be promoted to a perfect circle-valued Morse function \cite{BM}. The algebraic fibrations constructed here in dimension $5\leq n \leq 8$ cannot be promoted to perfect circle-valued Morse functions because they are homotopically trivial on some cusps, see Section \ref{f:cusps:subsection}.
After writing a first draft of this paper, we could modify the construction in dimension $n=5$ to build a fibration \cite{IMM2}.

\subsubsection*{Infinitely many cusps}
Theorem \ref{main:teo} produces some hyperbolic manifolds with finitely presented fundamental group and infinitely many cusps. 

In dimension 3, every hyperbolic manifold with finitely generated fundamental group has only finitely many cusps. This is yet another nice property of 3-manifolds that fails in higher dimensions: we already know from \cite{Kf, KP} that there are some hyperbolic 4-manifolds with finitely generated fundamental group and infinitely many rank-1 cusps. With Theorem \ref{main:teo} we upgrade these examples by substituting ``rank-1'' with ``maximal rank'', and ``finitely generated'' with ``finitely presented''. The reader may consult \cite{K} for a comprehensive survey on 3-dimensional theorems that are not valid in higher dimension (the paper also contains a lot of interesting material). 

It is conjectured in \cite{Kf} that there is no hyperbolic $n$-manifold with finitely generated fundamental group and infinitely many cusps, all of maximal rank. We note that Theorem \ref{main:teo} does not disprove this conjecture, since $\widetilde M^n$ also contains finitely many cusps of rank $n-2$.

\subsection*{Structure of the paper}
We introduce the polytopes $P^n$ and construct the manifolds $M^n$ in Section \ref{M:section} by means of some appropriate colourings. Then in Section \ref{f:section} we introduce the techniques of \cite{JNW} and build the diagonal maps $f \colon M^n \to S^1$ via some carefully chosen states. By analysing the behaviour of $f$ we finally prove Theorem \ref{main:teo}.

\section{The manifolds $M^n$} \label{M:section}
We recall a general procedure to construct a manifold from a right-angled polytope $P$ by colouring its facets. This method was first introduced by Vesnin \cite{V} in 1987, inspired by the 1931 construction of L\"obell of the first known compact hyperbolic 3-manifold \cite{L} and by a paper of Al Jubouri \cite{A}. The method was then applied in dimension 4 by Bowditch and Mess \cite{BoMe}, and more recently by various authors, including Kolpakov -- Martelli \cite{KM} and Kolpakov -- Slavich \cite{KS}.

After recalling some general facts, we turn to the
polytopes $P^3,\ldots, P^8$ and choose some nice colouring to generate the manifolds $M^3, \ldots, M^8$. We will use the algebraic properties of the octonions to build $M^7$ and $M^8$.

\subsection{Colours} \label{colours:subsection}
Let $P\subset \matX^n$ be a right-angled finite polytope in some space $\matX^n = \matH^n, \matR^n$ or $\matS^n$. We always suppose that $P$ has finite volume. When $\matX^n = \matH^n$, the polytope $P$ may have both finite and ideal vertices. 
We can interpret $P$ as an orbifold $P=\matX^n /\Gamma$, where $\Gamma$ is the right-angled Coxeter group generated by the reflections $r_F$ along the facets $F$ of $P$. A presentation for $\Gamma$ is
$$\langle\ r_F\ |\ r_F^2, [r_F, r_{F'}]\ \rangle$$
where $F$ varies among the facets of $P$ and $F,F'$ among the pairs of adjacent facets.

 A \emph{$c$-colouring} of $P$ is the assignment of a colour (taken from some fixed set of $c$ elements) to each facet of $P$, such that incident facets have distinct colours. We generally use $\{1,\ldots, c\}$ as a palette of colours and suppose that every colour is painted on at least one facet.

Let $e_1,\ldots, e_c$ be the canonical basis of the $\matZ_2$-vector space $\matZ_2^c$. A colouring on $P$ induces a homomorphism $\Gamma \to \matZ_2^c$ that sends $r_F$ of to $e_j$, where $j$ is the colour of $F$. 
One verifies that the kernel $\Gamma' \triangleleft \Gamma$ acts freely on $\matX^n$, and hence we get a manifold $M=\matX^n/{\Gamma'}$ that orbifold-covers $P= \matX^n/\Gamma$ with degree $2^c$. 

\begin{rem} \label{general:rem}
A more general notion of colouring consists of assigning a vector $\lambda_F \in \matZ_2^c$ to each facet $F$ of $P$, that is not necessarily a member of the canonical basis. We require that facets with non-empty intersection are sent to independent vectors, see for instance \cite{KS}. We do not need this more general definition here.
\end{rem}

The manifold $M= \matX^n/\Gamma'$ is hyperbolic, flat, or elliptic, according to the model $\matX^n$, and is tessellated into $2^c$ copies of $P$. Geometrically, we may see $M$ as constructed by mirroring $P$ iteratively along facets sharing the same colours $1,\ldots, c$. 

More precisely, we can describe the tessellation of $M$ into $2^c$ copies of $P$ as follows. For every vector $v\in \matZ_2^c$ we denote by $P_v$ an identical copy of $P$. We identify each facet $F$ of $P_v$ via the identity map with the same facet of $P_{v+e_j}$, where $j$ is the colour of $F$. This gives the tessellation of $M$.

We say that two colourings on $P$ are \emph{isomorphic} if they induce the same partition of facets, possibly after acting by some isometry of $P$. Isomorphic colourings yield isometric manifolds $M$.

As an example, we can always colour $P$ by assigning distinct colours to distinct facets. In this case $c$ equals the number of facets of $P$ and $\Gamma \to \matZ_2^c$ is just the abelianization homomorphism. With this choice, the resulting manifold $M$ can be quite big and often intractable (especially in higher dimension $n>3$), so it is often preferable to work with a small number of colours. Another fundamental reason for rejecting this inefficient colouring will be given below in Corollary \ref{trivial:cor}. 

Here are some more interesting examples:

\begin{figure}
 \begin{center}
  \includegraphics[width = 12.5 cm]{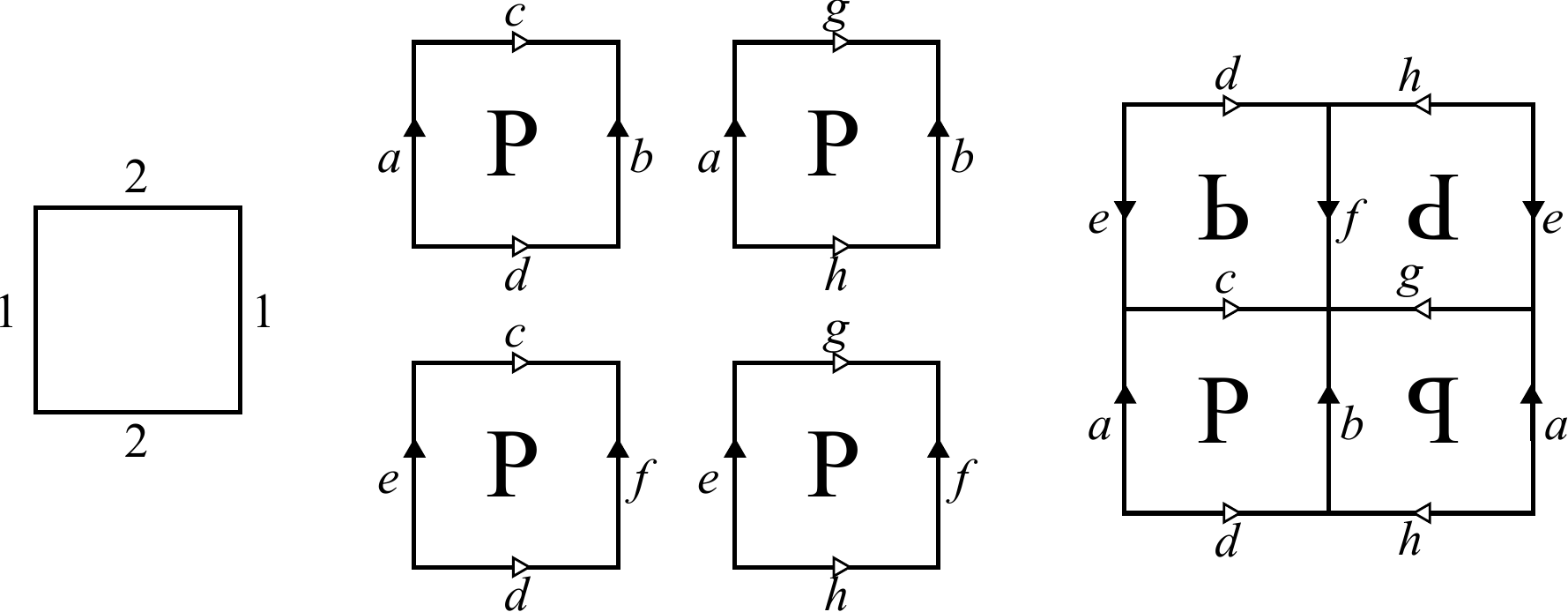}
 \end{center}
 \caption{A square $P$ with two colours (left). The flat manifold $M$ is constructed by taking four copies of $P$ and identifying the edges as shown (centre). We get a flat square torus (right).}
 \label{toro:fig}
\end{figure}

\begin{figure}
 \begin{center}
  \includegraphics[width = 3.2 cm]{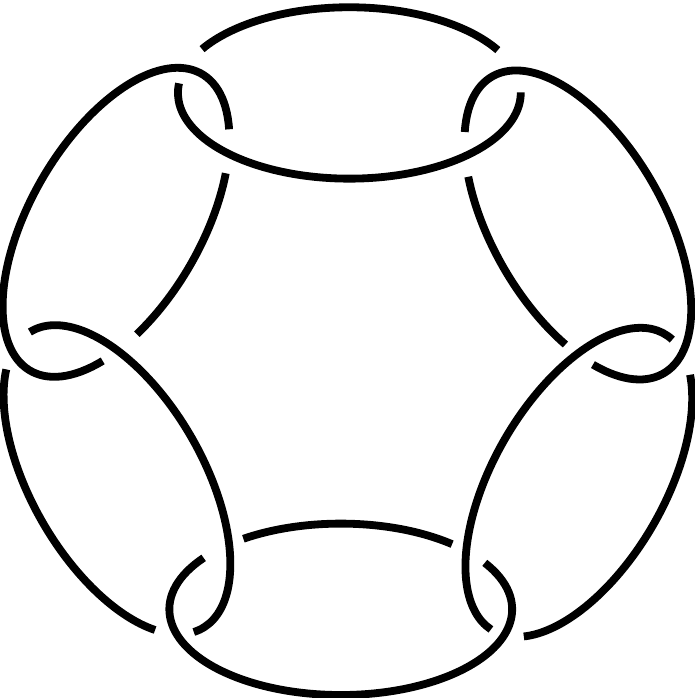}
 \end{center}
 \caption{The minimally twisted chain link with 6 components. }
 \label{chainlink:fig}
\end{figure}

\begin{itemize}
\item The Euclidean $n$-cube has a unique $n$-colouring up to isomorphisms, where opposite facets are coloured with the same colour. This colouring produces a flat torus tessellated into $2^n$ cubes. The case $n=2$ is shown in Figure \ref{toro:fig} and is easily generalised to any $n$. More generally, we will prove below that any colouring on the $n$-cube produces a flat torus.
\item The right-angled spherical $n$-simplex has a unique colouring up to isomorphisms: it has $n+1$ colours and it produces the spherical manifold $S^n$ with its standard tessellation into $2^{n+1}$ right-angled simplexes.
\item Every ideal hyperbolic polygon is right-angled in a vacuous sense (it has no finite vertices) and can be 1-coloured! Indeed the edges are pairwise disjoint and hence can all be given the same colour. The construction produces the double of the polygon, a hyperbolic punctured sphere. 
\item Every right-angled hyperbolic hexagon can be 2-coloured, and the result is the double of a geodesic pair-of-pants, that is a genus-2 hyperbolic surface, tessellated into four hexagons.
\item The ideal octahedron in $\matH^3$ has a unique 2-colouring up to isomorphisms. The colouring produces a cusped hyperbolic 3-manifold which is the complement of the minimally twisted chain link with 6 components shown in Figure \ref{chainlink:fig}, see \cite{KM} for more details.
\item The ideal 24-cell in $\matH^4$ has a unique 3-colouring, that produces a hyperbolic 4-manifold with 24 cusps with 3-torus sections, see \cite{KM}.
\end{itemize}

\begin{rem}
When $P$ is compact, it has some finite vertex incident to $n$ pairwise incident facets. These facets must have distinct colours and hence we necessarily have $c\geq n$. When $c=n$ the covering $M\to P$ has the minimum possible degree and is called a \emph{small cover}. These were studied in \cite{DJ}. Our examples will not be small covers because the polytopes that we consider have some ideal vertices, and moreover we will often have $c>n$.
\end{rem}

\begin{rem}
The manifold $M$ is always orientable: it suffices to orient $P_v$ like $P$ if and only if $v_1+\cdots +v_c$ is even (see an example in Figure \ref{toro:fig}). We note that $M$ is not guaranteed to be orientable if one uses the more general notion of colouring of Remark \ref{general:rem}. The crucial fact here is that all the vectors $e_j \in \matZ_2^c$ colouring the facets have an odd number of 1's in their entries. 
\end{rem}

\subsection{Cusp sections}
When $P\subset \matH^n$ has some ideal vertex, the resulting manifold $M$ has some cusps, and there is a simple and straightforward procedure to derive its shape directly from the combinatorics of $P$ and its colouring, that we now explain.

Let $v$ be an ideal vertex of $P\subset \matH^n$. The \emph{link} of $v$ in $P$ is by definition the intersection of $P$ with a small horosphere centered at $v$. It is a right-angled Euclidean $(n-1)$-parallelepiped $C$. We use the letter $C$ because a parallelepiped is combinatorially a cube, and in fact it will also be isometric to a cube in all the cases that are of interest here. 

The parallelepiped $C$ inherits a colouring from that of $P$: it suffices to assign to every $(n-2)$-facet of $C$ the colour of the $(n-1)$-facet of $P$ that contains it. 
The induced colouring on $C$ generates an abstract compact flat $(n-1)$-manifold $N$ that orbifold-covers $C$ by the procedure explained above. The manifold $N$ is tessellated into $2^{c'}$ copies of $C$, where $c'\leq c$ is the number of colours of $C$.

By construction, the preimage of $C$ in $M$ consists of some copies of $N$. The number of copies is equal to $2^h$, where $h=c-c'$ is the number of colours in $\{1,\ldots , c\}$ that are \emph{not} assigned to any facet incident to $v$, that is that are not assigned to any facet of $C$. The preimage of $C$ in $M$ consists of $2^c = 2^h \cdot 2^{c'}$ copies of $C$ in total.

Summing up: there are $2^h$ cusps in $M$ lying above $v$, each with section $N$ derived directly from $C$ and its induced colouring. Here are some examples:

\begin{itemize}
\item If $P\subset \matH^2$ is a 1-coloured ideal polygon, the link $C$ at each ideal vertex $v$ is a 1-coloured 1-cube (that is, a segment). Here $h=0$, the preimage of $C$ is a circle, and there is one cusp above each $v$. The punctured sphere $M$ has one cusp above each ideal vertex of $P$.
\item If $P\subset \matH^2$ is a 2-coloured ideal triangle, there are two types of ideal vertices. Two ideal vertices have a 2-coloured 1-cube as a link $C$, while the third ideal vertex has a 1-coloured 1-cube $C$. We have $h=0$ for the first two ideal vertices, and $h=1$ for the third. Therefore the counterimage of $C$ consists of one circle for each of the first two ideal vertices and two circles for the third. The manifold $M$ has 4 cusps overall, two above the first two vertices and two above the third. It is a four punctured sphere tessellated into four copies of $P$. See Figure \ref{triangles:fig}.
\item If $P$ is a 2-coloured ideal octahedron, it has 6 ideal vertices, and the link of each is a 2-coloured square $C$. We have $h=0$ on each ideal vertex, so the counterimage of $C$ in $M$ is a unique torus. The hyperbolic 3-manifold $M$ has 6 cusps overall, one above each ideal vertex of $P$. As already stated, $M$ is the complement of the link in Figure \ref{chainlink:fig}.
\item If $P$ is a 3-coloured ideal 24-cell in $\matH^4$, it has 24 ideal vertices, and the section of each is a 3-coloured 3-cube $C$, see \cite{KM}. We have $h=0$ on each ideal vertex, so the counterimage of $C$ is a single 3-torus. The hyperbolic 4-manifold $M$ has 24 toric cusps, one above each ideal vertex of $P$.
\end{itemize}

\begin{figure}
 \begin{center}
  \includegraphics[width = 7 cm]{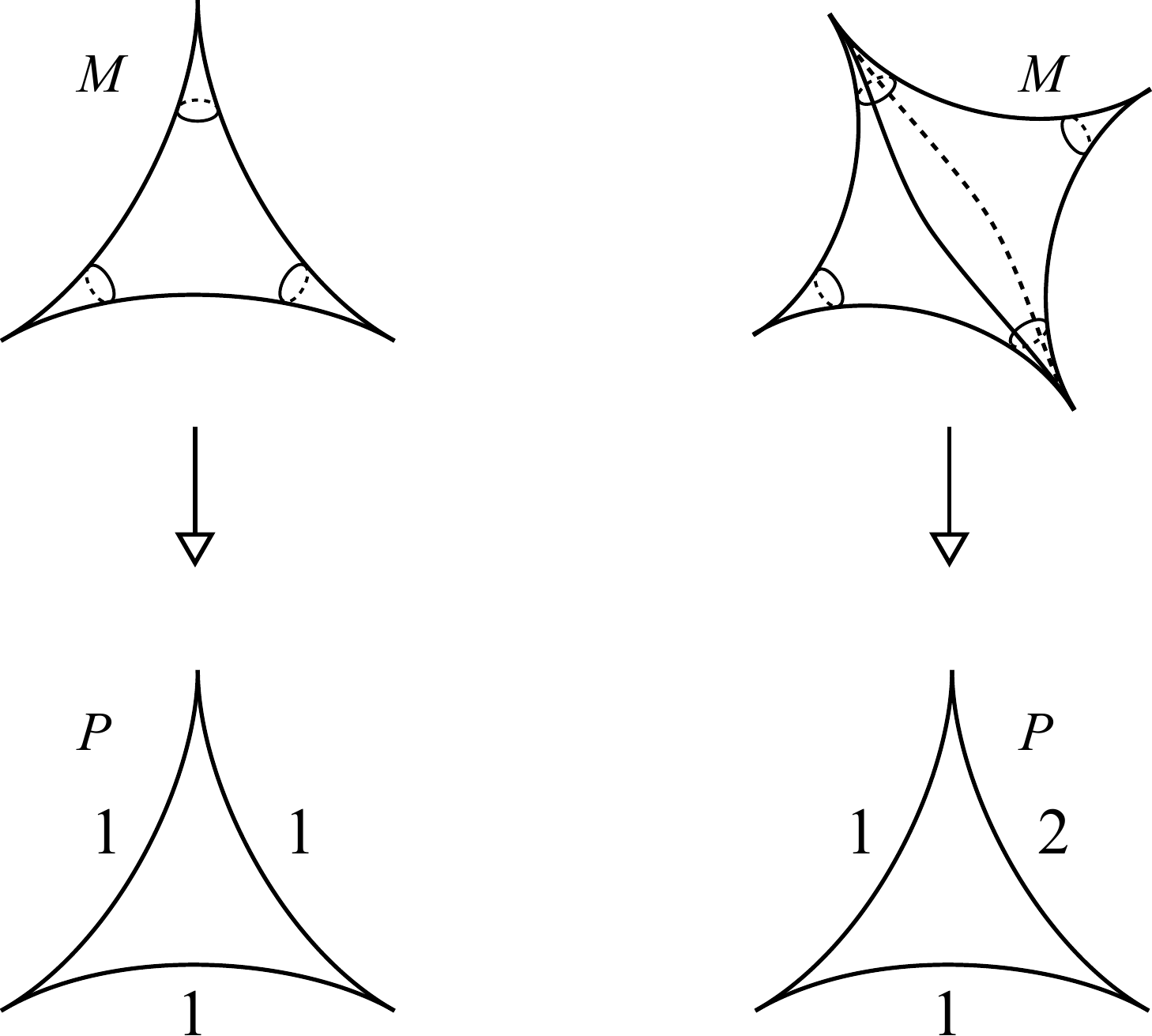}
 \end{center}
 \caption{When $P$ is an ideal triangle with 1 or 2 colours, the manifold $M$ is a sphere with 3 or 4 punctures, respectively.}
 \label{triangles:fig}
\end{figure}

\subsection{The Euclidean parallelepiped}
One basic example is the Euclidean right-angled $n$-parallelepiped
$$C = [0,l_1] \times \cdots \times [0,l_n] \subset \matR^n.$$

Fix a $c$-colouring of $C$. Only opposite facets are disjoint and hence may share the same colour. Therefore we have $n \leq c \leq 2n$, there are $2n-c$ pairs of opposite facets with the same colour and the remaining $2(c-n)$ facets with distinct colours. Let $M$ be the flat manifold produced by the $c$-colouring of $C$.

\begin{prop} \label{cubes:prop}
The resulting flat manifold $M$ is a $n$-torus isometric to a product of circles of lengths $a_1l_1, \ldots, a_nl_n$. Here $a_i$ equals 2 or 4 depending on whether the $i$-th pair of opposite facets share the same colour or not.
\end{prop}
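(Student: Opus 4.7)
The plan is to exploit the product structure of the parallelepiped in the most straightforward way. Since $C = [0,l_1]\times\cdots\times[0,l_n]$ is a Riemannian product, the Coxeter reflection group $\Gamma$ of $C$ decomposes as a direct product $\Gamma_1\times\cdots\times\Gamma_n$, where each $\Gamma_i\cong D_\infty$ is generated by the two reflections $s_i^0, s_i^1$ across the opposite facets $\{x_i=0\}$ and $\{x_i=l_i\}$; reflections in different directions commute because the corresponding facets are perpendicular. Consequently the tessellation of $\matR^n$ by copies of $C$ itself splits as a product of tessellations of $\matR$ by intervals of length $l_i$.

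Next I would analyse how the colouring homomorphism $\rho\colon \Gamma\to\matZ_2^c$ respects this product. For each $i$, let $V_i\subset \matZ_2^c$ be the span of the two basis vectors corresponding to the colours of the two facets perpendicular to the $i$-th axis. Since any two facets in different coordinate directions are incident, their colours are distinct, so the sets of basis vectors appearing in the $V_i$ are pairwise disjoint; as every colour is used on some facet, we get the direct sum decomposition $\matZ_2^c=V_1\oplus\cdots\oplus V_n$. Under this decomposition $\rho=\rho_1\times\cdots\times\rho_n$, where $\rho_i\colon \Gamma_i\to V_i$ sends $s_i^0,s_i^1$ to the respective colour vectors. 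Therefore $\ker\rho=\ker\rho_1\times\cdots\times\ker\rho_n$, and hence
\[
M=\matR^n/\ker\rho \;\cong\; \prod_{i=1}^n \matR/\ker\rho_i,
\]
which already shows that $M$ is a product of $n$ flat $1$-manifolds, i.e.\ circles.

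Finally I need only compute each circle length. In direction $i$ the group $\Gamma_i=D_\infty$ acts on $\matR$ by reflections across $0$ and $l_i$, with translation subgroup $2l_i\matZ$ generated by $s_i^0 s_i^1$. If the two opposite facets share a single colour, then $\rho_i(s_i^0)=\rho_i(s_i^1)$, so $s_i^0s_i^1\in\ker\rho_i$ and $\ker\rho_i$ has index $2$; it is precisely the translation subgroup $2l_i\matZ$, giving a circle of length $2l_i$. If instead the two colours are distinct, then $\rho_i(s_i^0)$ and $\rho_i(s_i^1)$ are linearly independent in $V_i\cong\matZ_2^2$, so $\ker\rho_i$ has index $4$ in $\Gamma_i$, and $(s_i^0s_i^1)^2$ is the smallest nontrivial power lying in $\ker\rho_i$, yielding the translation lattice $4l_i\matZ$ and a circle of length $4l_i$.

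There is no real obstacle here beyond being careful with the bookkeeping of the direct sum $\matZ_2^c=\bigoplus V_i$, which is the key algebraic fact making the whole argument factor. A quick sanity check is that $\prod_i a_i = 2^{2n-c}\cdot 4^{\,c-n}=2^c$ equals the degree of the covering $M\to C$, consistent with $M$ containing exactly $2^c$ copies of $C$.
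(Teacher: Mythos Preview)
Your proof is correct and follows essentially the same idea as the paper's: both identify the translations $s_i^0 s_i^1$ (of length $2l_i$) and check which power lies in the kernel according to whether the two colours coincide, then conclude by an index count. The only organisational difference is that you make the product decomposition $\rho=\rho_1\times\cdots\times\rho_n$ explicit and compute each $\ker\rho_i$ separately, whereas the paper simply exhibits the subgroup $a_1l_1\matZ\times\cdots\times a_nl_n\matZ$ inside $\Gamma'$ and observes that both have index $2^c$ in $\Gamma$; your ``sanity check'' $\prod_i a_i = 2^c$ is exactly the paper's index computation.
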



\begin{proof}
Recall that $M = \matR^n/\Gamma'$ where $\Gamma$ is the reflection group of $C$ and $\Gamma'\triangleleft \Gamma$ is the kernel of the map $\Gamma \to \matZ_2^c$ induced by the colouring.

Let $r_{i,1}$ and $r_{i,2}$ be the reflections along the opposite facets of $C$ that are orthogonal to the $i$-th axis, for $i=1,\ldots, n$. The composition $r_{i,1}r_{i,2}$ is a translation along the axis of distance $2l_i$. If the facets share the same colour we have $r_{i,1}r_{i,2} \in \Gamma'$, while if they do not we have $r_{i,1}r_{i,2}r_{i,1}r_{i,2} \in \Gamma'$. This shows that
$$a_1l_1 \matZ \times \cdots \times a_nl_n \matZ < \Gamma'$$
where $a_i$ equals 2 or 4 depending on whether the $i$-th pair of opposite facets share the same colour. These two subgroups have the same index in $\Gamma$ since
$$2^{2n-c} \cdot 4^{c-n} = 2^c.$$ 
Therefore $\Gamma' = a_1l_1 \matZ \times \cdots \times a_nl_n \matZ$
and $M$ is as stated.
\end{proof}

The proof also shows that $M$ is tessellated into $2^{2n-c} \cdot 4^{c-n} = 2^c$ copies of $C$. The two extreme cases are the following: if $c=n$ then $M$ is tessellated into $2^c$ copies of $C$, while if $c=2n$ then $M$ is tessellated into $4^c$ copies. 

A cusp in a hyperbolic $n$-manifold is \emph{toric} if its section is a flat $(n-1)$-torus. We summarize our discussion as follows.

\begin{cor} \label{cusps:cor}
If $P\subset \matH^n$ is right-angled with some ideal vertices, every colouring on $P$ produces some hyperbolic $n$-manifold $M$ whose cusps are all toric.

If $P$ has $c$ colours and $v$ is an ideal vertex, there are $2^{c-c'}$ toric cusps in $M$ above $v$, where $c'$ is the number of distinctly coloured facets incident to $v$.
\end{cor}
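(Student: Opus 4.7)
The plan is to assemble the corollary directly from the cusp-section discussion of the previous subsection together with Proposition \ref{cubes:prop}, so the argument is essentially a bookkeeping exercise. I would organize it as follows.

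First, I would recall that since $P$ is right-angled, the link of any ideal vertex $v$ is a Euclidean right-angled $(n-1)$-parallelepiped $C \subset \matR^{n-1}$, obtained as the intersection of $P$ with a small horosphere centered at $v$. The induced colouring on $C$ is defined by assigning to each $(n-2)$-facet of $C$ the colour of the unique $(n-1)$-facet of $P$ that contains it. By definition, the number of colours appearing on $C$ is exactly $c'$, the number of distinctly coloured facets of $P$ incident to $v$.

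Next, I would apply Proposition \ref{cubes:prop} to the right-angled parallelepiped $C$ equipped with this induced $c'$-colouring. The proposition guarantees that the resulting flat manifold $N$ is isometric to a product of $n-1$ circles, hence a flat $(n-1)$-torus. Since $N$ is precisely the cusp section of each cusp of $M$ sitting above $v$ (by the construction recalled in the previous paragraphs of the paper), every such cusp of $M$ is toric. Repeating this for every ideal vertex of $P$ shows that all cusps of $M$ are toric.

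Finally, for the count: by the general discussion preceding the statement, the preimage in $M$ of the link $C$ consists of $2^h$ copies of $N$, where $h = c - c'$ is the number of colours from $\{1,\ldots,c\}$ that do not appear on any facet of $P$ incident to $v$. Each such copy of $N$ is the section of a distinct cusp of $M$ above $v$, giving the stated $2^{c-c'}$ cusps above $v$. I do not expect any genuine obstacle here, since all the nontrivial content (namely, identifying the flat manifold produced by a coloured right-angled parallelepiped) is already in Proposition \ref{cubes:prop}; the only mild care needed is to verify that the bookkeeping of $h$ and $c'$ matches the hypotheses of that proposition, in particular that the $c'$ colours on $C$ satisfy $n-1 \le c' \le 2(n-1)$, which is immediate from the right-angled structure at $v$.
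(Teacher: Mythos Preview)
Your proposal is correct and matches the paper's approach exactly: the paper presents this corollary explicitly as a summary of the preceding discussion (``We summarize our discussion as follows''), relying on the cusp-section analysis together with Proposition~\ref{cubes:prop} precisely as you outline. There is no separate proof in the paper beyond this.
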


\begin{rem}
If we use the more general notion of colouring of Remark \ref{general:rem}, non-toric cusps may also appear: see for instance \cite{FKS}. 
\end{rem}

\subsection{A program in Sage}
We have written a general program in Sage, available from \cite{code}, that may be used to study a coloured right-angled polytope $P$ and the resulting manifold $M$. The program takes as an input the incidence graph of the facets of $P$ and their colouring, and produces as an output some information on $P$ and more importantly on $M$. It calculates in particular the Betti numbers of $M$ via the formula stated in \cite[Theorem 1.1] {CP}, also explained in \cite[Section 2.2]{FKS}, and the number of cusps of $M$ using Corollary \ref{cusps:cor}.

\subsection{The right-angled hyperbolic polytopes}
We refer to the excellent papers \cite{ERT} and \cite{PV} for an introduction to the sequence of right-angled hyperbolic polytopes $P^3,\ldots, P^8$. These have many beautiful properties that we now briefly summarize.

\begin{table}
\begin{center}
\begin{tabular}{c||ccccccc}
        & Facets & Ideal & Finite & $\Isom(P^n)$ & Order & Dual\\
 \hline \hline
$P^3$ & 6 & 3 & 2 & $A_1 \times A_2$ & 12 & Triangular prism \\
$P^4$ & 10 & 5 & 5 & $A_4$ & 120 & Gosset $0_{21}$ \\
$P^5$ & 16 & 10 & 16 & $D_5$ & 1920 & Gosset $1_{21}$ \\
$P^6$ & 27 & 27 & 72 & $E_6$ & 51840 & Gosset $2_{21}$ \\
$P^7$ & 56 & 126 & 576 & $E_7$ & 2903040 & Gosset $3_{21}$ \\
$P^8$ & 240 & 2160 & 17280 & $E_8$ & 696729600 & Gosset $4_{21}$
\end{tabular}
\vspace{.5 cm}
\caption{The number of facets, ideal vertices, and finite vertices of $P^n$, the isometry group $\Isom(P^n)$ expressed as a Weyl group and its order $|\Isom(P^n)|$, and the dual Euclidean polytope.}
\label{Pn:table}
\end{center}
\end{table}

Each $P^n \subset \matH^n$ is a finite volume right-angled polytope with both finite and ideal vertices. The link of a finite or ideal vertex is respectively a right-angled spherical $(n-1)$-simplex and a Euclidean $(n-1)$-cube.
The numbers of facets, ideal vertices, and finite vertices of $P^n$ are listed in Table \ref{Pn:table}, together with the isometry group of $P^n$ and its order. 
The isometry group acts transitively on the facets, so in particular these are all isometric: in fact, every facet of $P^n$ is isometric to $P^{n-1}$ when $n\geq 4$. The quotient of $P^n$ by its isometry group is a simplex.

\subsection{Euler characteristic}
Recall that the orbifold Euler characteristic of a hyperbolic right-angled polyhedron $P$ is zero in odd dimension while in even dimension it can be calculated via the simple formula
$$\chi(P) = \sum_{i=0}^n (-1)^i \frac{f_i}{2^{n-i}}$$
where $f_i$ is the number of $i$-dimensional faces of $P$, including $P$ itself (so $f_n=1$). Only real vertices (not the ideal ones) contribute to $f_0$. 
From this formula we deduce the well-known \cite{ERT} values 
$$\chi(P^4) = 1/16, \qquad \chi(P^6) = -1/8, \qquad \chi(P^8) = 17/2.$$

In even dimension the Euler characteristic and the volume are roughly the same thing, up to a constant that will be recalled below.

\subsection{The dual Gosset polytopes}
Combinatorially, the polytopes $P^n$ are dual to the \emph{Gosset polytopes} listed in the last column of Table \ref{Pn:table} and discovered by Gosset \cite{G} in 1900, see \cite{ERT}. Every Gosset polytope is a Euclidean polytope with regular facets, whose isometry group (which is the same as $\Isom(P^n)$) acts transitively on the vertices. The regular facets of the Gosset polytope are of two types: some $(n-1)$-simplexes (dual to the real vertices of $P^n$) and some $(n-1)$-octahedra (dual to the ideal vertices of $P^n$). A \emph{$k$-octahedron} here is the regular polytope dual to the $k$-cube (sometimes also called $k$-orthoplex).

We will describe a colouring of $P^n$ as a colouring of the vertices of the dual Gosset polytope, where we require of course that two vertices adjacent connected by an edge must have distinct colours (so only the 1-skeleton of the dual Gosset polytope is important at this stage). We would like to find some colouring with a reasonably small number of colours, and possibly a high degree of symmetry: we are confident that some natural choices should arise from the exceptional properties of $P^n$ and their dual Gosset polytopes, and this is indeed the case as we will see.

We now analyse the polyhedra $P^3, \ldots, P^8$ individually. For each $P^n$ we define a colouring and study the resulting hyperbolic manifold $M^n$.

\subsection{The manifold $M^3$} The hyperbolic polyhedron $P^3\subset \matH^3$ is the right-angled double pyramid with triangular base shown in Figure \ref{bipyramid:fig}. The three vertices of the triangular base are ideal, while the two remaining vertices are real. Each of the 6 faces $F$ of $P^3$ is a triangle with a right-angled real vertex and two ideal vertices.

\begin{figure}
 \begin{center}
  \includegraphics[width = 5 cm]{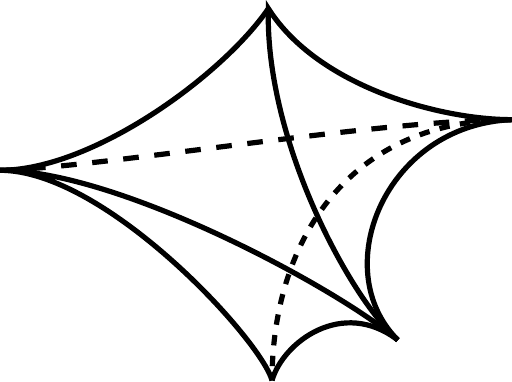}
 \end{center}
 \caption{The polyhedron $P^3$ is a right-angled bipyramid with three ideal vertices along the horizontal plane and two real ones (top and bottom in the figure).}
 \label{bipyramid:fig}
\end{figure}

\begin{figure}
 \begin{center}
  \includegraphics[width = 4.5 cm]{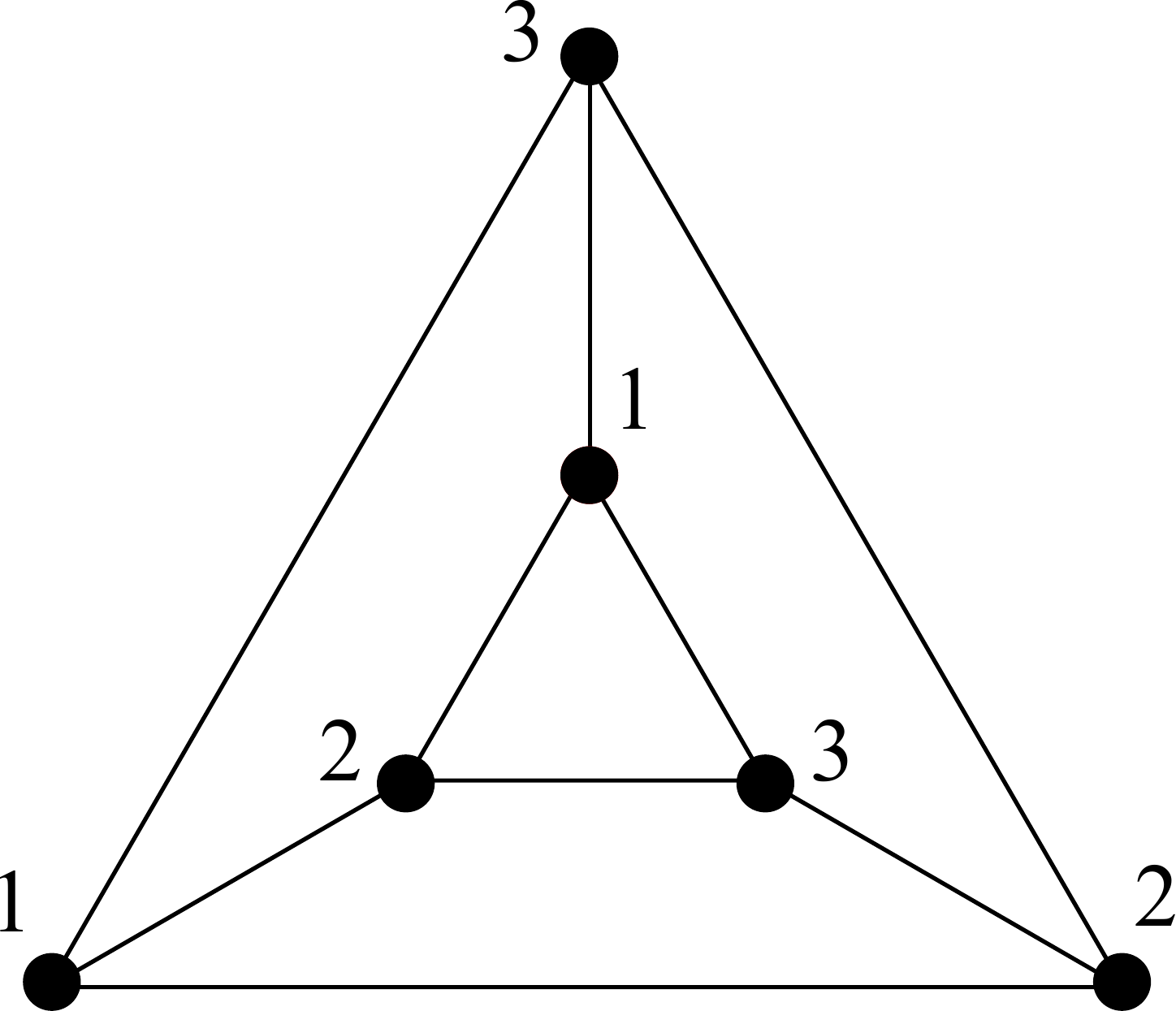}
 \end{center}
 \caption{The 1-skeleton of the triangular prism has a unique 3-colouring up to isomorphism, shown here.}
 \label{P3:fig}
\end{figure}

The dual Gosset polytope is a triangular prism, whose faces are two base equilateral triangles and three lateral squares. Its 1-skeleton is shown schematically in Figure \ref{P3:fig}. It can be coloured with 3 colours in a unique way (shown in the figure) up to isomorphism. Therefore $P^3$ has a unique 3-colouring up to isomorphism. The polyhedron cannot be coloured with less than 3 colours.

We equip $P^3$ with this 3-colouring. This produces a hyperbolic 3-manifold $M^3$, tessellated into $2^3=8$ copies of $P^3$. 

The link of each ideal vertex $v$ of $P^3$ is a square $C$, that is dual to a square face of the Gosset prism. We see from Figure \ref{P3:fig} that $C$ is 3-coloured: two opposite edges of $C$ have distinct colours, and the other two opposite edges have the same colour. By Corollary \ref{cusps:cor} the counterimage of $C$ consists of a single (because $2^{3-3}=1$) torus cusp section in $M^3$. The hyperbolic manifold $M^3$ has therefore 3 cusps, one above each vertex $v$ of $P$. 

Using Sage we have calculated the Betti numbers of $M^3$:
$$b_0 = 1, \qquad b_1 = 3, \qquad b_2 = 2.$$

We get of course $\chi(M^3)=0$. 

\subsection{The manifold $M^4$}
The hyperbolic polytope $P^4\subset \matH^4$ is fully described in \cite{PV, RT4} and we refer to these sources for more details. It has 10 facets, each isometric to $P^3$. It has also 5 real vertices and 5 ideal vertices.

\begin{figure}
 \begin{center}
  \includegraphics[width = 12 cm]{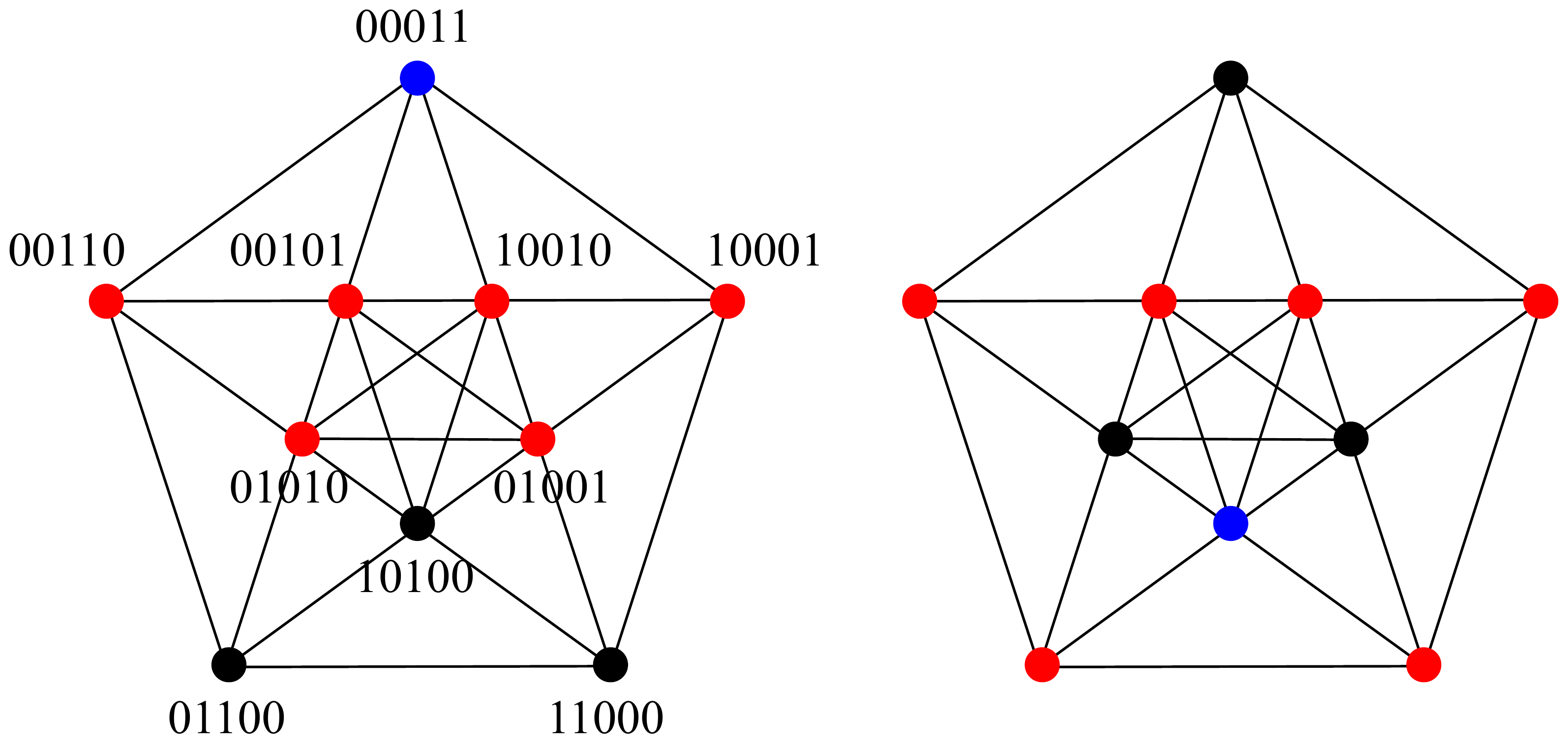}
 \end{center}
 \caption{The orthogonal projection of the 1-skeleton of the rectified simplex $0_{21}$ on the plane in $\matR^5$ generated by $(1, \epsilon, -\epsilon, -1, 0)$ and its cyclic permutations, where $\epsilon =(\sqrt 5 -1)/2 = 2\cos (2\pi/5)$ is the positive root of $\epsilon^2 + \epsilon -1$. The image of the vertex $(0,0,0,1,1)$ is indicated as $00011$, and so on. Some edges are superposed along the projection, so two vertices that are connected by an edge on the plane projection may not be so in $0_{21}$. To clarify this ambiguity we have chosen a blue vertex and painted in red the 6 vertices adjacent to it, in two cases (all the other cases are obtained by rotation).}
 \label{P4:fig}
\end{figure}

\begin{figure}
 \begin{center}
  \includegraphics[width = 5 cm]{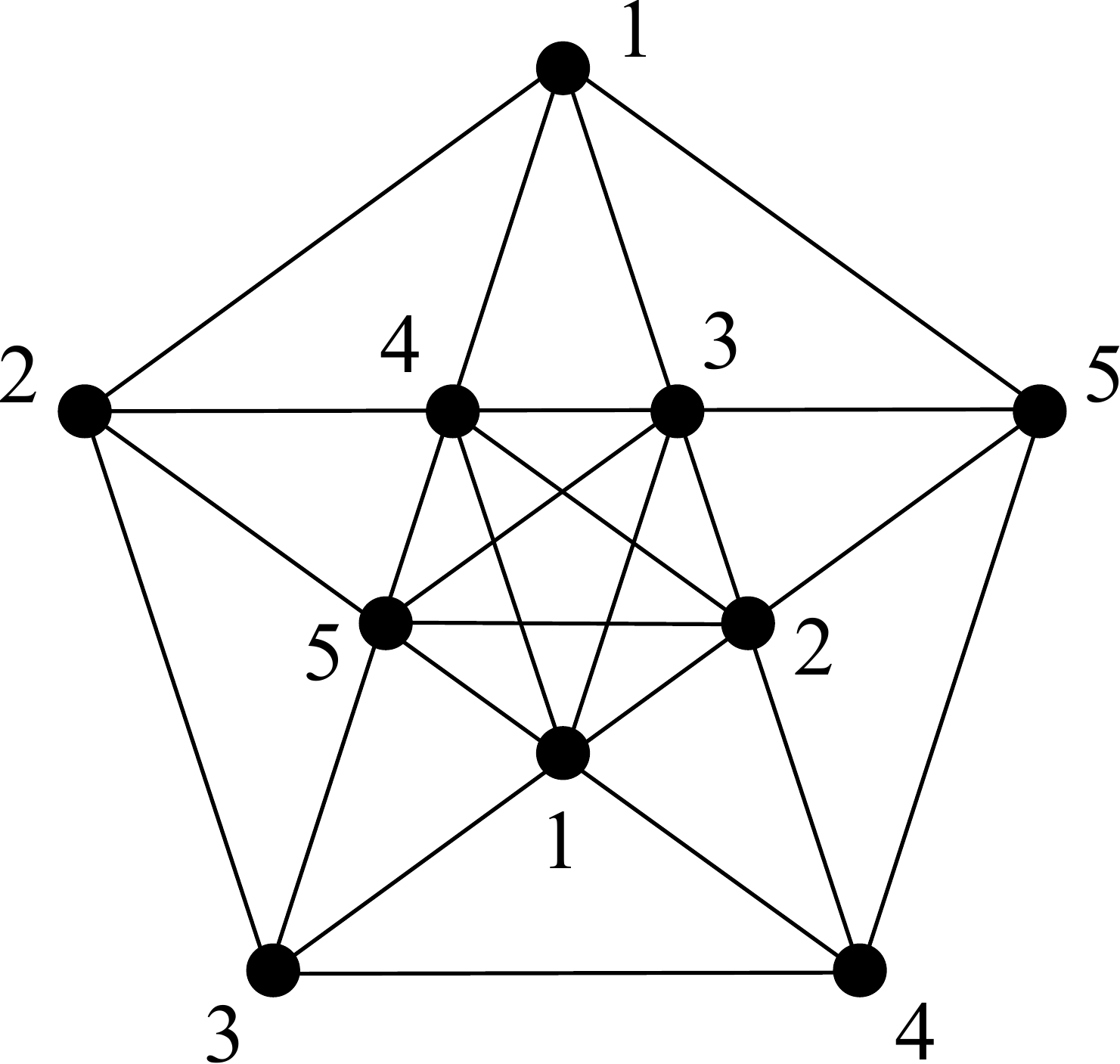}
 \end{center}
 \caption{A 5-colouring of the 1-skeleton of $0_{21}$ and hence of $P^4$.}
 \label{P4_colour:fig}
\end{figure}

The dual Gosset polytope $0_{21}$ is the 4-dimensional rectified simplex. That is, it is the convex hull of the midpoints of the 10 edges of a regular 4-dimensional simplex. Its 10 vertices may be seen in $\matR^5$ as the points obtained by permuting the coordinates of $(0,0,0,1,1)$. Two such vertices are adjacent if they differ only in two coordinates. The Gosset polytope $0_{21}$ has 10 facets; of these, 5 are regular tetrahedra (created by the rectification) dual to the finite vertices of $P^4$, and 5 are regular octahedra (the rectified facets of the original regular 4-simplex) dual to the ideal vertices of $P^4$.

A convenient orthogonal plane projection of the 1-skeleton of $0_{21}$ is shown in Figure \ref{P4:fig}. We assign to $0_{21}$, and hence to $P^4$, the 5-colouring depicted in Figure \ref{P4_colour:fig}. This produces a hyperbolic 4-manifold $M^4$, tessellated into $2^5 = 32$ copies of $P^4$. We have $\chi(M^4) = 32/16 = 2$.

\begin{figure}
 \begin{center}
  \includegraphics[width = 3.5 cm]{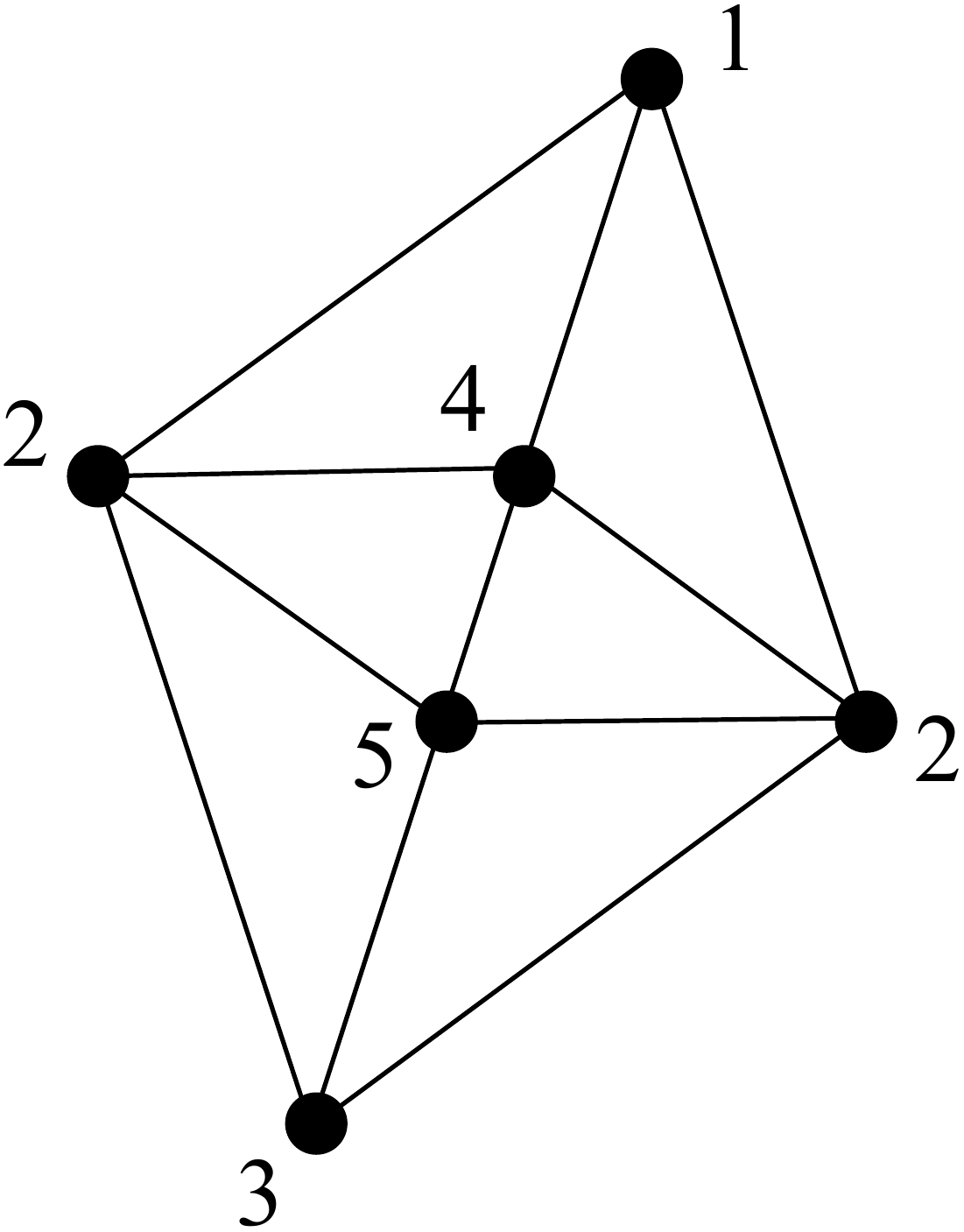}
 \end{center}
 \caption{An octahedral facet of $P^4$. This is a subgraph of the 1-skeleton in Figure \ref{P4_colour:fig}. Some edges are superposed.}
 \label{P4_facet:fig}
\end{figure}

The polytope $P^4$ has 5 ideal vertices $v_1,\ldots, v_5$. Each $v_i$ is dual to the octahedral facet of $0_{21}$ contained in the coordinate hyperplane $x_i=0$, whose 6 vertices in Figure \ref{P4:fig} are precisely those with $x_i=0$. The case $i=1$ is shown in Figure \ref{P4_facet:fig}. We can see on the figure that the octahedron is 5-coloured. The other four octahedra are obtained from this one by rotating the plane projection diagram and they are also 5-coloured.

We have discovered that the link of each ideal vertex of $P^4$ is a 5-coloured cube $C$. By Corollary \ref{cusps:cor} the counterimage of $C$ consists of a single (since $2^{5-5}=1$) toric cusp section in $M^4$. Therefore the hyperbolic manifold $M^4$ has 5 cusps overall, one above each ideal vertex of $P^4$.

Using Sage we have calculated the Betti numbers of $M^4$:
$$b_0 = 1, \qquad b_1 = 5, \qquad b_2 = 10, \qquad b_3 = 4.$$

We get $\chi(M^4)=2$ again.

\subsection{The manifold $M^5$} 
The hyperbolic polytope $P^5\subset \matH^5$ is fully described in \cite{PV, RT5} and we refer to these sources for more details. It has 16 facets, each isometric to $P^4$. It also has 16 real vertices and 10 ideal vertices. Every real vertex is opposed to a facet.

\begin{figure}
\labellist
\small\hair 2pt
\pinlabel $+++++$ at 260 535
\pinlabel $----+$ at 260 -10
\pinlabel $---+-$ at 65 60
\pinlabel $--+--$ at 465 60
\pinlabel $-+---$ at 160 170
\pinlabel $+----$ at 370 170
\pinlabel $--+++$ at 255 140

\pinlabel $++-+-$ at 65 463
\pinlabel $+++--$ at 465 463
\pinlabel $-+++-$ at 160 350
\pinlabel $+-++-$ at 370 350
\pinlabel $++--+$ at 255 390

\pinlabel $+-+-+$ at 505 240
\pinlabel $-++-+$ at 405 280
\pinlabel $+--++$ at 155 280
\pinlabel $-+-++$ at 5 240

\endlabellist
 \begin{center}
  \includegraphics[width = 12.5 cm]{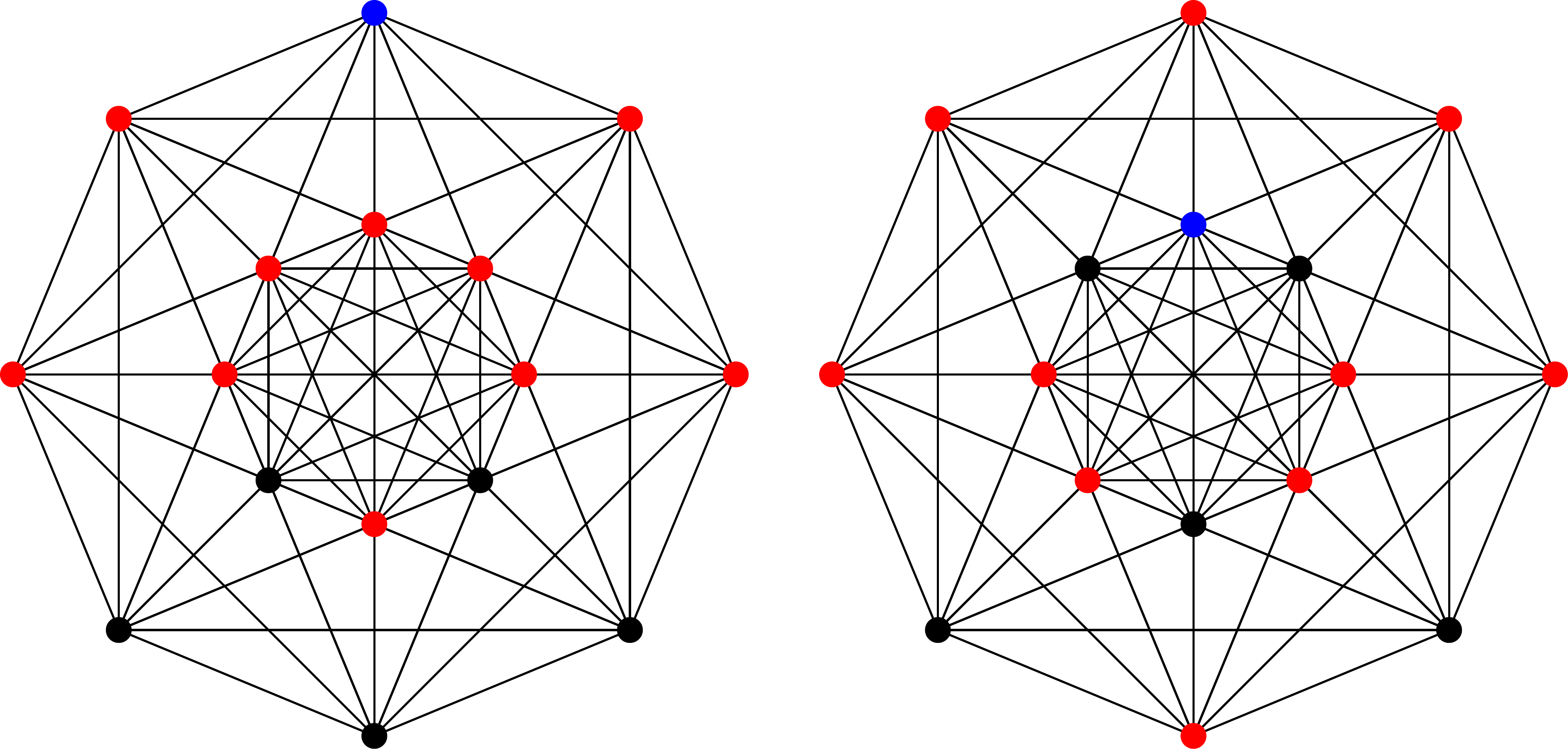}
 \end{center}
 \caption{The orthogonal projection of the 1-skeleton of $1_{21}$ on the plane spanned by the vectors $(\sqrt 2, \sqrt 2, 2-\sqrt 2, 2-\sqrt 2, 0)$ and $(2-\sqrt 2, \sqrt 2-2, \sqrt 2, -\sqrt 2, 0)$. The string $\pm \pm \pm \pm \pm$ indicates the projection of the vertex $(\pm 1, \pm 1, \pm 1, \pm 1, \pm 1)$. Some edges are superposed along the projection, so two vertices that are connected by an edge on the plane projection may not be so in $1_{21}$. To clarify visually this ambiguity we have chosen a blue vertex and painted in red the 10 vertices adjacent to it, in two cases (all the other cases are obtained by rotation).}
 \label{P5:fig}
\end{figure}

The dual Gosset polytope $1_{21}$ has 16 vertices. We can represent these in $\matR^5$ as the vertices $(\pm 1, \pm 1, \pm 1, \pm 1, \pm 1)$ with an odd number of minus signs. Two vertices are connected by an edge if they differ only in two coordinates. The Gosset polytope $1_{21}$ has 26 facets; of these, 16 are regular 4-simplexes dual to the finite vertices of $P^5$, and 10 are regular 4-octahedra dual to the ideal vertices of $P^5$. 

\begin{figure}
 \begin{center}
  \includegraphics[width = 7 cm]{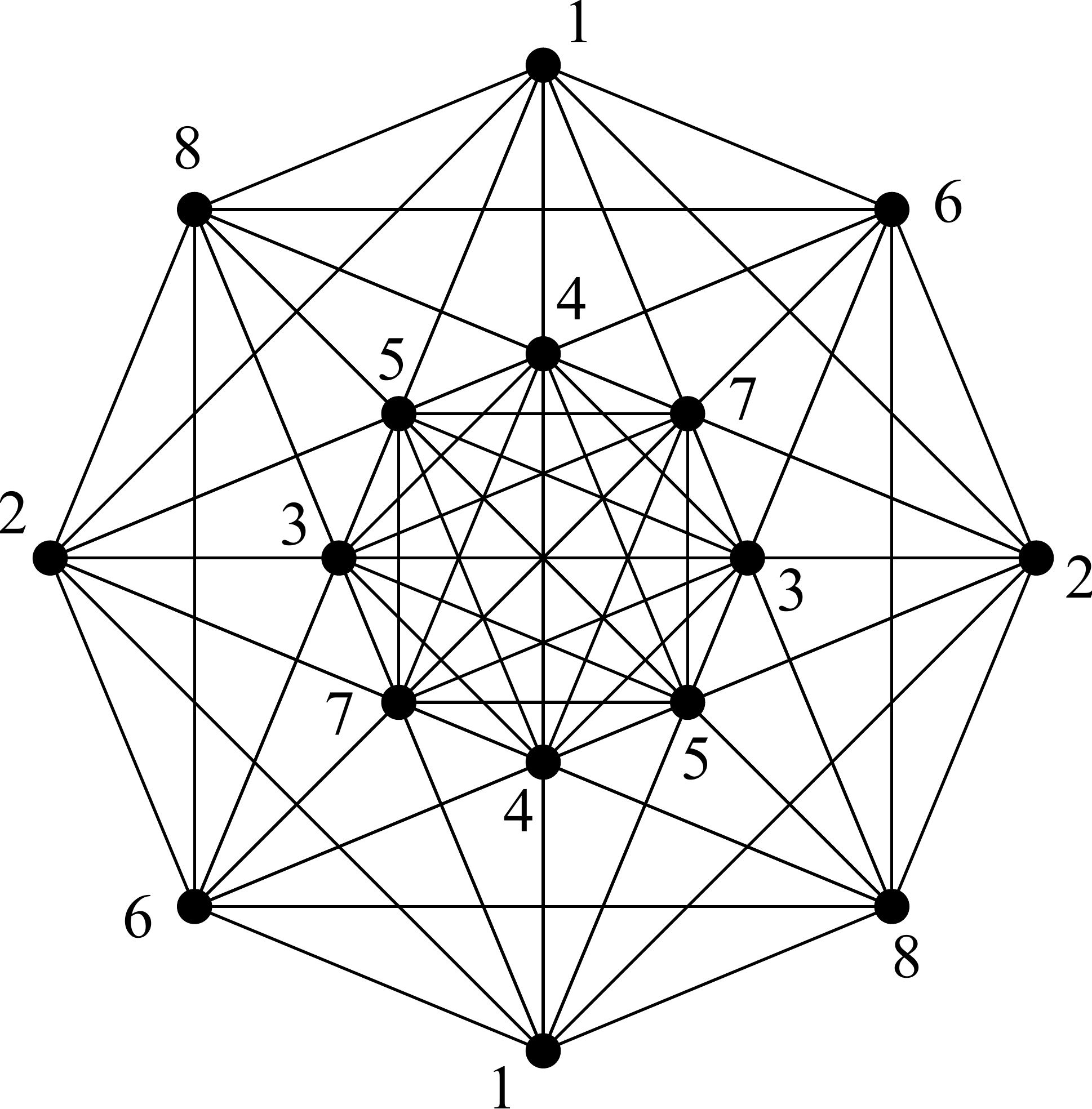}
 \end{center}
 \caption{The chosen colouring for $P^5$.}
 \label{P5_colouring:fig}
\end{figure}

A convenient planar projection of its 1-skeleton is shown in Figure \ref{P5:fig}. We assign to $1_{21}$, and hence to $P^5$, the 8-colouring depicted in Figure \ref{P5_colouring:fig}. This produces a hyperbolic 5-manifold $M^5$ tessellated into $2^8 = 256$ copies of $P^5$.

\begin{figure}
 \begin{center}
  \includegraphics[width = 12.5 cm]{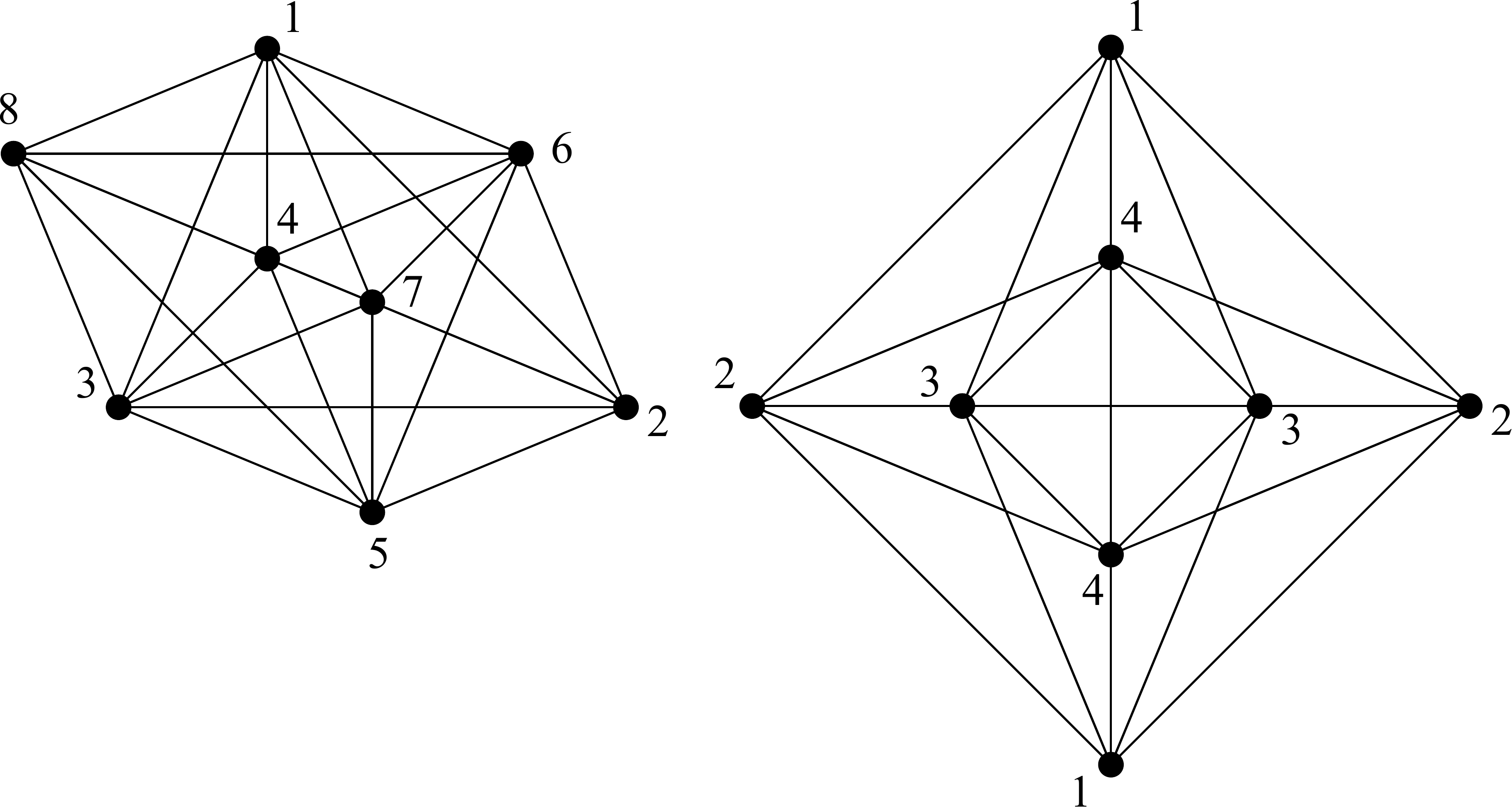}
 \end{center}
 \caption{The ten 4-octahedral facets of $1_{21}$ are of two types. Eight are obtained by rotating the type shown on the left, and two by rotating the type shown on the right. These are subgraphs of the 1-skeleton in Figure \ref{P5_colouring:fig}. Some edges are superposed.}
 \label{P5_facets:fig}
\end{figure}

The polytope $P^5$ has 10 ideal vertices. Each ideal vertex is dual to a 4-octahedral facet of $1_{21}$ contained in a hyperplane $x_i = \pm 1$. We deduce then that there are two types of 4-octahedral facets, depicted in Figure \ref{P5_facets:fig}. Eight facets are of the left type, and two of the right type (all obtained by rotating the graphs shown in the figure). The vertices of the facets of the first type inherit a 8-colouring, while those of the facets of the second type inherit a 4-colouring. 

We have discovered that there are 8 ideal vertices of the first type and 2 ideal vertices of the second type in $P^5$. The link of an ideal vertex of the first type of $P^5$ is a 8-coloured 4-cube $C$, while the link of an ideal vertex of the second type is a 4-coloured 4-cube $C$. Note that 4 and 8 are precisely the minimum and maximum number of colours on a 4-cube. By Corollary \ref{cusps:cor}, the counterimage of $C$ consists of a single (since $2^{8-8}=1$) toric cusp section in the first case, and of $2^{8-4} = 2^4 = 16$ toric cusp sections in the second case. Therefore the hyperbolic manifold $M^4$ has $8\cdot 1 + 2\cdot 16  = 40$ cusps overall. The first $8$ cusps lie above the 8 vertices of the first type, and the remaining 32 cusps lie above the 2 vertices of the second type, distributed as 16 above each.

Using Sage we have calculated the Betti numbers of $M^5$:
$$b_0 = 1, \qquad b_1 = 24, \qquad b_2 = 120, \qquad b_3 = 136, \qquad b_4 = 39.$$

We get of course $\chi(M^5)=0$.

\subsection{The manifold $M^6$} \label{M6:subsection}
The hyperbolic polytope $P^6\subset \matH^6$ is fully described in \cite{ERT, PV}, and we refer to these sources for more details. It has 27 facets, each isometric to $P^5$. It also has 72 finite vertices and 27 ideal vertices. Every ideal vertex is opposed to a facet.

The dual Gosset polytope $2_{21}$ has 27 vertices. We can represent them in the affine hyperspace of $\matR^7$ of equation $x_1 + \cdots + x_6 - 3x_7 = -1$, as the vertices 
$$(-1,0,0,0,0,0,0), \quad (1,1,0,0,0,0,1), \quad (0,1,1,1,1,1,2)$$ 
and all the other vertices obtained from these by permuting the first 6 coordinates, so we get $6+15+6=27$ vertices in total, see \cite[Table 2]{ERT}.
Two vertices are connected by an edge if their Lorentzian product in $\matR^7$ with signature $(++++++-)$ is zero. The Gosset polytope $2_{21}$ has 99 facets; of these, 72 are regular 5-simplexes dual to the finite vertices of $P^6$, and 27 are regular 5-octahedra dual to the ideal vertices of $P^6$.

\begin{figure}
 \begin{center}
  \includegraphics[width = 10 cm]{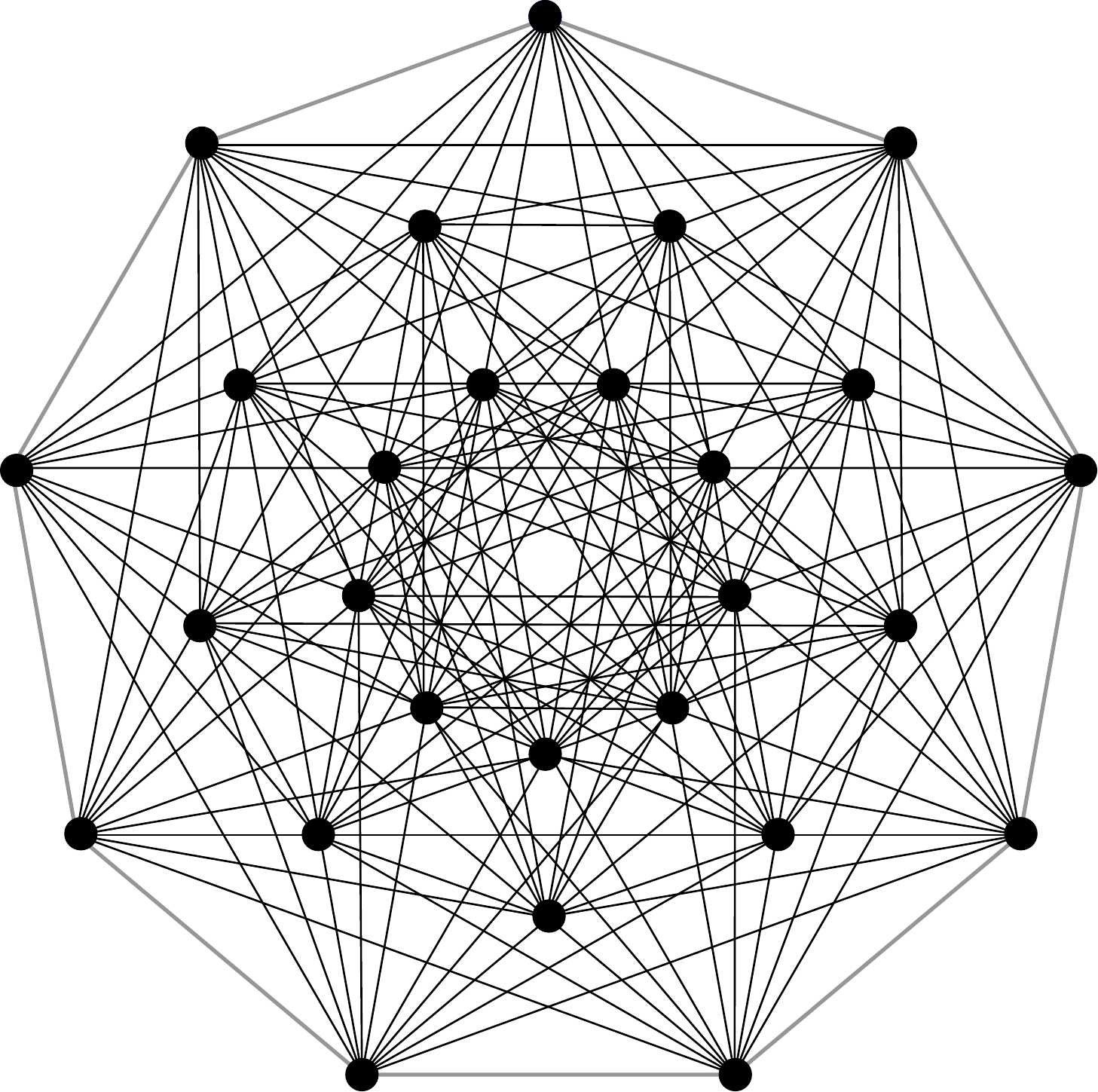}
 \end{center}
 \caption{An orthogonal projection of the 1-skeleton of $2_{21}$ on the plane. Some edges are superposed. There are 9 lines intersecting in the centre of the figure, each line containing three vertices that are mutually non incident.}
 \label{P6:fig}
\end{figure}

Both $P^6$ and $2_{21}$ have many remarkable properties. To mention one, the 1-skeleton of $2_{21}$ is the configuration graph of the 27 lines in a general cubic surface, see \cite{C}. A planar projection of the 1-skeleton of $2_{21}$ taken from \cite{C} is shown in Figure \ref{P6:fig}. In the figure we see that there are 9 lines that intersect in the centre, containing each 3 mutually non-adjacent vertices. This suggests that the polytope may have a nice 9-colouring. 

Inspired by the figure, we describe a 9-colouring for $2_{21}$. The three vertices
$$(-1,0,0,0,0,0,0), \quad (1,1,0,0,0,0,1), \quad (1,0,1,1,1,1,2)$$
are mutually non connected by any edge since their Lorentzian products are not zero. We assign them the colour 1. If we permute cyclically the first 6 entries of these three vertices we get 5 more triplets of mutually non connected vertices, and we assign them the colours $2,\ldots, 6$. Finally, we assign the colours $7,8,9$ to the following remaining triplets of mutually disjoint vertices:
\begin{gather*}
(1,0,1,0,0,0,1), \quad (0,1,0,0,1,0,1), \quad (0,0,0,1,0,1,1); \\
(0,1,0,1,0,0,1), \quad (0,0,1,0,0,1,1), \quad (1,0,0,0,1,0,1); \\
(0,0,1,0,1,0,1), \quad (1,0,0,1,0,0,1), \quad (0,1,0,0,0,1,1).
\end{gather*}

We equip $P^6$ with this 9-colouring. Each triple of facets with the same colour is called a \emph{triplet}. The colouring produces a hyperbolic 6-manifold $M^6$, tessellated into $2^9 = 512$ copies of $P^6$. We have $\chi(M^6) = -512/8 = -64$.

The polytope $P^6$ has 27 ideal vertices. Using our program in Sage \cite{code} we discover that the link of each of the 27 ideal vertices of $P^6$ is a 9-coloured 5-cube $C$. We show one explicit example. Every facet $F$ of $P^6$ is opposite to an ideal vertex, which is incident precisely to those facets that are not incident to $F$. Correspondingly, every vertex $v$ in $2_{21}$ is opposite to a 5-octahedral facet, whose vertices are precisely those that are not connected to $v$. The 5-octahedral facet opposite to the vertex $v=(-1,0,0,0,0,0,0)$ has the following vertices: 
\begin{gather*}
(1,1,0,0,0,0,1),\! (1,0,1,0,0,0,1),\! (1,0,0,1,0,0,1),\! (1,0,0,0,1,0,1),\! (1,0,0,0,0,1,1), \\
(1,0,1,1,1,1,2),\! (1,1,0,1,1,1,2),\! (1,1,1,0,1,1,2),\! (1,1,1,1,0,1,2),\! (1,1,1,1,1,0,2).
\end{gather*}
The two vertices that lie in the same column are not connected. Their colours are
\begin{gather*}
1,\ 7,\ 9,\ 8,\ 6, \\
1,\ 2,\ 3,\ 4,\ 5.
\end{gather*}
All the 9 colours are present. As we said above, using our Sage program we discover that a similar configuration holds at every vertex $v$. Therefore by Corollary \ref{cusps:cor} the counterimage of $C$ consists of a single (since $2^{9-9}=1$) toric cusp section. We deduce finally that the hyperbolic manifold $M^6$ has 27 cusps, one above each vertex of $P^6$. The Betti numbers of $M^6$, calculated by our program, are:
$$b_0 = 1, \qquad b_1 = 18, \qquad b_2 = 183, \qquad b_3 = 411, \qquad b_4 = 207, \qquad b_5 = 26.$$

We get $\chi(M^6)=-64$ again.

\subsection{The manifold $M^7$}
The hyperbolic polytope $P^7 \subset \matH^7$ is described in \cite{ERT, PV}. It
has 56 facets, each isometric to $P^6$. It also has 576 finite vertices and 126 ideal vertices. 

The dual Gosset polytope $3_{21}$ has 56 vertices. We will discover below that the 56 vertices can be partitioned into 14 sets of 4 mutually disjoint vertices, called \emph{quartets}. This partition will be induced from a colouring of $P^8$, that is in turn easily described using octonions. The precise description of the partition is given below in Section \ref{M7:again:subsection}.

We equip $P^7$ with the 14-colouring induced by the partition into 14 quartets. This produces a hyperbolic 7-manifold $M^7$, tessellated into $2^{14}=16384$ copies of $P^7$.

The polytope $P^7$ has 126 ideal vertices. Using our Sage program we discover that, similarly as with $P^5$, there are two types of ideal vertices with respect to the chosen 14-colouring of $P^7$. The first type consists of 112 vertices, and the second type of only 14. The link of an ideal vertex of the first type is a 12-coloured 6-cube $C$, while the link of an ideal vertex of the second type is a 6-coloured 6-cube. Note that, as with $P_5$, the numbers 6 and 12 are the minimum and maximum number of colours in a 6-cube. From Corollary \ref{cusps:cor} we deduce that $M^7$ has $14 \cdot 2^{14-6} + 112 \cdot 2^{14-12} = 4032$ cusps overall.

Using Sage we have calculated the Betti numbers of $M^7$:
$$b_0 = 1, \ b_1 = 182, \ b_2 = 6321, \ b_3 = 41300, 
\ b_4 = 55139, \ b_5 = 24010, \ b_6 = 4031.$$

We get of course $\chi(M^7)=0$.

\subsection{The manifold $M^8$}
The hyperbolic polytope $P^8\subset \matH^8$ is described in \cite{ERT, PV}. It has 240 facets, each isometric to $P^7$. It also has 17280 finite vertices and 2160 ideal vertices. 

The dual Gosset polytope $4_{21}$ has 240 vertices. This beautiful albeit complicated polytope can be described elegantly using octonions, much in the same way as the 4-dimensional 24-cell may be defined using quaternions. This viewpoint is crucial in this paper, so we introduce it carefully. 

\subsubsection*{A 3-colouring for the 24-cell} To warm up, we start by recalling that the 24 vertices of the 24-cell are the quaternions 
$$\pm 1, \ \pm i,\ \pm j,\ \pm k,\ \tfrac 12 (\pm 1 \pm i \pm j \pm k).$$

Two such vertices are adjacent along an edge if and only if their Euclidean scalar product is $\frac 12$ (we identify the quaternions space with the Euclidean $\matR^4$ as usual). Every vertex is adjacent to 8 other vertices. 

We can assign 3 colours to the 24 vertices, by subdividing them into 3 sets of 8 vertices each, that we call \emph{octets}.
These are:
\begin{enumerate}
\item $ \pm 1, \pm i, \pm j, \pm k$;
\item the elements $\tfrac 12 (\pm 1 \pm i \pm j \pm k) $ with an even number of minus signs;
\item the elements $\tfrac 12 (\pm 1 \pm i \pm j \pm k) $ with an odd number of minus signs.
\end{enumerate}

The scalar product of two vertices lying in the same octet is an integer, so it is never $\frac 12$. Therefore this indeed defines a 3-colouring of the vertices of the 24-cell. Since the dual of a 24-cell is another 24-cell, we also get a 3-colouring of the facets of the dual 24-cell. This colouring was heavily employed in \cite{KM}.

Here is an algebraic description of this 3-colouring that will be useful below. The 24 vertices of the 24-cell described above form a group called the \emph{binary tetrahedral group}. The 8 elements $ \pm 1, \pm i, \pm j, \pm k$ form a normal subgroup of index 3, called the \emph{quaternion group} and indicated with the symbol $Q_8$. The octets are just the three lateral classes of $Q_8$.

\begin{figure}
 \begin{center}
  \includegraphics[width = 6 cm]{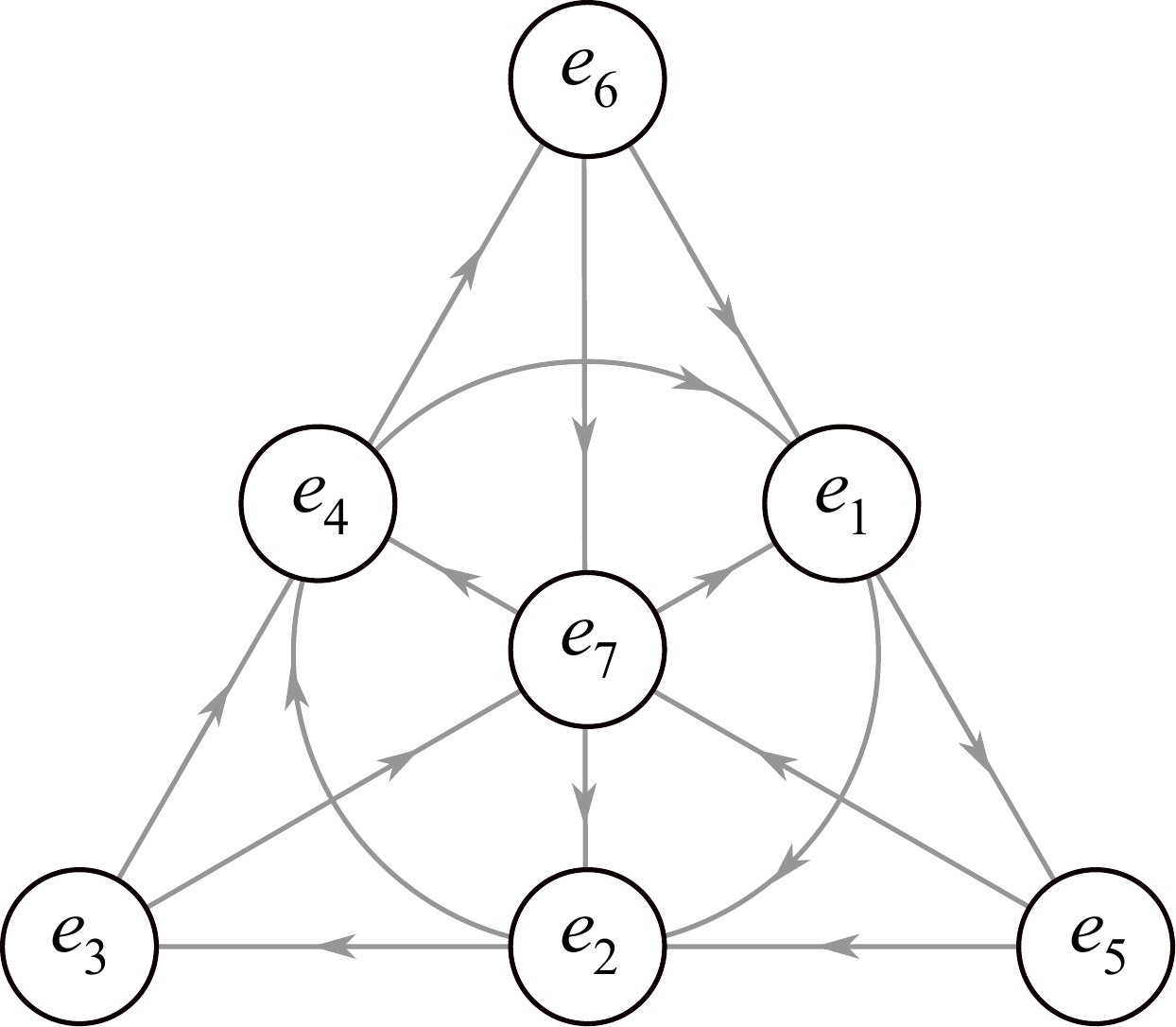}
 \end{center}
 \caption{The Fano plane. The circle should be interpreted as a line.}
 \label{Fano:fig}
\end{figure}

\subsubsection*{Octonions}
We now turn to the Gosset polytope $4_{21}$ and the octonions. For a nice introduction to the subject we recommend \cite{B}.
We describe an octonion as a linear combination of $1, e_1, e_2, \ldots, e_7$. We have $e_i^2 = -1$, and the multiplication of two distinct elements $e_i$ and $e_j$ is beautifully described by the Fano plane shown in Figure \ref{Fano:fig}. The Fano plane is the projective plane over $\matZ_2$ and it contains 7 points and 7 oriented lines: every line is a cyclically ordered triple of points as in the figure. For every $i\neq j$ we have $e_ie_j = \pm e_k$, where $e_k$ is the third vertex in the unique line containing $e_i$ and $e_j$, and the sign is positive if and only if the line is cyclically oriented like $e_i\to e_j \to e_k$. So for instance $e_1e_2 = e_4$ and $e_1e_6 = -e_5$. In general, we get
$$e_{n} e_{n+1} = e_{n+3}$$
where the subscripts run modulo 7. The product is neither commutative nor associative: for every $i,j,k$ we have
$$(e_ie_j)e_k = \pm e_i(e_je_k)$$
where the sign is $+1$ if and only if $e_i,e_j,e_k$ belong to the same line in the Fano plane (which is always the case if $i,j,k$ are not distinct).

\subsubsection*{A 15-colouring for the Gosset polytope $4_{21}$}
The 240 vertices of the Gosset polytope $4_{21}$ are the octonions
\begin{gather*}
 \pm 1,\ \pm e_1,\ \pm e_2, \ \pm e_3, \ \pm e_4, \ \pm e_5, \ \pm e_6, \ \pm e_7, \\
\tfrac 12(\pm 1 \pm e_{n} \pm e_{n+1} \pm e_{n+3}), \quad
\tfrac 12(\pm e_{n+2} \pm e_{n+4} \pm e_{n+5} \pm e_{n+6})
\end{gather*}
where $n$ runs modulo 7. Although we will not need this information, we mention that these are (up to rescaling) precisely the 240 non-trivial elements of smallest norm in the $E_8$ lattice.

We have 16 elements of type $\pm 1$ or $\pm e_i$. Each line $l$ in the Fano plane contains three vertices $e_n, e_{n+1}, e_{n+3}$ and determines 16 elements of type $\tfrac 12(\pm 1 \pm e_{n} \pm e_{n+1} \pm e_{n+3})$ and 16 elements of type $\tfrac 12(\pm e_{n+2} \pm e_{n+4} \pm e_{n+5} \pm e_{n+6})$, so we indeed get $16 + 7 \cdot 16 + 7 \cdot 16 = 15 \cdot 16 = 240$ vertices overall.

Two vertices of $4_{21}$ are connected by an edge if and only if their Euclidean scalar product is $\frac 12$. One can check that every vertex is adjacent to 56 other vertices (its link is dual to $P^7$ that has 56 facets).

Similarly to what we did with the 24-cell, we can assign a 15-colouring to $4_{21}$ by subdividing the 240 vertices into 15 sets of 16 elements each; we call each such set a \emph{hextet}. The hextets are:
\begin{enumerate}
\item $\pm 1,\pm e_1, \pm e_2, \pm e_3, \pm e_4, \pm e_5, \pm e_6, \pm e_7$;
\item the elements $\tfrac 12(\pm 1 \pm e_{n} \pm e_{n+1} \pm e_{n+3})$ and
$\tfrac 12(\pm e_{n+2} \pm e_{n+4} \pm e_{n+5} \pm e_{n+6})$ with an even number of minus signs;
\item the elements $\tfrac 12(\pm 1 \pm e_{n} \pm e_{n+1} \pm e_{n+3})$ and
$\tfrac 12(\pm e_{n+2} \pm e_{n+4} \pm e_{n+5} \pm e_{n+6})$ with an odd number of minus signs.
\end{enumerate}
The hextets of type (2) and (3) depend on the choice of $n$ modulo 7. So we get $1+2\cdot 7 = 15$ hextets overall. One can check that the scalar product of two vertices lying in the same hextet is always an integer, so it is never $\frac 12$. Therefore we can assign the same colour to all the 16 members of a given hextet, and hence obtain a 15-colouring for $4_{21}$ as promised.

\subsubsection*{Algebraic description}
There is an algebraic interpretation for the 15-colouring of $4_{21}$ analogous to that for the 3-colouring of the 24-cell. We warn the reader that some caution is needed when passing from quaternions to octonions: first, the product of octonions is notoriously nonassociative; second, contrary to a common mistake (see \cite[Chapter 9]{CS} for a discussion), and as proved by Coxeter \cite{C2}, the 240 vertices of $4_{21}$ are \emph{not} closed under multiplication! Indeed the product of the two vertices
$$\frac 12 (1+e_1+e_3+e_7) \cdot \frac 12(1+e_1+e_2+e_4) = \frac 12(e_1 + e_3+ e_4 + e_6)$$
is not a vertex. We could fix this via a single reflection that transforms the 240 vertices into a multiplicatively closed set (this is explained in \cite[Section 9.2]{CS}), thus obtaining another isometric description of $4_{21}$, but we do not really need this here, so we just keep them as they are. 

The only thing that we need here is that the 240 octonions are closed under left multiplication by each of the 16 elements in the hextet $S=\{\pm 1, \pm e_i\}$, a fact that can be verified easily. The set $S$ is closed under multiplication, but it is not a group since it is not associative.
One can also verify that the left multiplication by each element of $S$ preserves each hextet, and that this ``action'' of $S$ is free and transitive, in the sense that for very pair of distinct elements in a hextet there is a unique element of $S$ that sends the first to the second by left-multiplication.

Summing up, the 15 hextets that we have constructed are the orbits of the action of $S$ by left-multiplication on the set of 240 vertices of $4_{21}$. This is analogous to the 3-colouring of the 24-cell, where the 3 octets are the orbits of the action of the quaternion group $Q_8$ by left multiplication.

\subsubsection*{The manifold $M^8$}
We equip $P^8$ with the 15-colouring just defined. This produces a hyperbolic 8-manifold $M^8$, tessellated into $2^{15} = 32768$ copies of $P^8$. We have $\chi(M^8) = 2^{15}\cdot 17/2 = 278528$. 

The polytope $P^8$ has 2160 ideal vertices. Using our Sage program we discover a phenomenon that was already present with $P^5$ and $P^7$. The ideal vertices are of two types: the first type contains 1920 of them and the second type 240. The link of a vertex of the first type is a 14-coloured 7-cube, while the link of a vertex of the second type is a 7-coloured 7-cube. As with $P^5$ and $P^7$, we note that 7 and 14 are the minimum and maximum possible number of colours in a 7-cube. From Corollary \ref{cusps:cor} we deduce that $M^8$ has $240\cdot 2^{15-7} + 1920 \cdot 2^{15-14} = 65280$ cusps.

Using Sage we have calculated the Betti numbers of $M^8$:
\begin{gather*}
b_0 = 1,\quad b_1 = 365, \quad b_2 =  33670, \quad b_3 = 583290, \\
b_4 = 1783226, \quad b_5 = 1346030, \quad b_6 = 456595, \quad b_7 = 65279.
\end{gather*}

We get $\chi(M^8)=278528$ again.

\subsection{Back to the polytope $P^7$} \label{M7:again:subsection}
The polytope $P^7$ is a facet of $P^8$. We think of $P^7$ as the facet dual to the vertex 1 of $4_{21}$. As we already said, we equip $P^7$ with the colouring induced by the 15-colouring of $P^8$ just introduced.

We study this inherited colouring of $P^7$. We think of $3_{21}$ as the link figure of the vertex $1$ of $4_{21}$. The vertices of $4_{21}$ adjacent to $1$ are precisely those of the form
$$\tfrac 12(1 \pm e_{n} \pm e_{n+1} \pm e_{n+3})$$
where $n$ runs modulo 7. So we get $7\cdot 8 = 56$ vertices, as required. These vertices are contained in the hyperplane $x_0 = \frac 12$ and their convex hull is $3_{21}$. Two such vertices are connected by an edge in $3_{21}$ if and only if their scalar product is $\frac 12$.

The 15-colouring of $4_{21}$ induces a 14-colouring of $3_{21}$ that partitions the 56 vertices into 14 sets of four vertices each, that we call \emph{quartets}. Each quartet consists of the vertices $\frac 12(1 \pm e_n \pm e_{n+1} \pm e_{n+3})$ that share the same $n$ and the same parity of the minus signs.

\subsection{Volumes}
We have constructed a colouring on each polytope $P^3, \ldots, P^8$, and hence obtained a list of manifolds $M^3,\ldots, M^8$. Table \ref{colourings:table} summarizes the colouring type of each polytope. 

\begin{table}
\begin{center}
\begin{tabular}{c|c|c|c|c|c}
$P^3$ & $P^4$ & $P^5$ & $P^6$ & $P^7$ & $P^8$ \\
\hline \hline
3 pairs & 5 pairs & 8 pairs & 9 triplets & 14 quartets & 15 hextets
\end{tabular}
\vspace{.5 cm}
\caption{The colouring type of each $P^3,\ldots, P^8$.}
\label{colourings:table}
\end{center}
\end{table}

The volumes of the hyperbolic manifolds $M^3, \ldots, M^8$ are listed in Table \ref{Mn:table}. In even dimension $n=2m$ we have used the Gauss-Bonnet formula
$$\Vol(P) = (-2\pi)^m / (n-1)!! \cdot \chi(P).$$
In odd dimension, we have
$$\Vol(P^3) = L(2)\sim 0.91, \quad \Vol(P^5) = 7 \zeta(3) / 8 \sim 1.05, \quad
\Vol(P^7) = 8L(4) \sim 7.92.$$
The symbols $\zeta$ and $L$ indicate the Riemann and Dirichlet functions, see \cite{RT, ERT}.

\begin{table}
\begin{center}
\begin{tabular}{c||cccc}
        & Volume & $\chi$ & Cusps \\
 \hline \hline
$M^3$ & $8L(2) \sim 7.28$ & 0 & 3 \\
$M^4$ & $8\pi^2/3 \sim 26.3$ & $2$ & 5 \\
$M^5$ & $224\zeta(3) \sim 269$ & 0 & 40  \\
$M^6$ & $512\pi^3/ 15\sim 1.06 \cdot 10^3$ & $-64$ & 27 \\
$M^7$ & $131072L(4) \sim 1.30 \cdot 10^5$ & 0 & 4032 \\
$M^8$ & $4456448\pi^4\!/105 \sim 4.13\cdot 10^6$  & $278528$ & 65280
\end{tabular}
\vspace{.5 cm}
\caption{The volume, the Euler characteristic, and the number of cusps of each hyperbolic $n$-manifold $M^n$.}
\label{Mn:table}
\end{center}
\end{table}

\subsection{The chosen colourings are all minimal}
Although we will not need it, we mention the following fact.

\begin{prop}
The colourings for $P^3,\ldots, P^8$ defined in the previous sections have the smallest possible number of colours for each polytope.
\end{prop}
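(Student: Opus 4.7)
The plan is to show that the number of colours in our construction matches the chromatic number of the $1$-skeleton of each dual Gosset polytope, which I will denote $G^n$ (so $G^3 = -1_{21}$, $G^4 = 0_{21}$, and so on). Our colourings partition $V(G^n)$ into equal-size classes of size $s$, with $s = 2$ for $n = 3, 4, 5$, $s = 3$ for $n = 6$, $s = 4$ for $n = 7$, and $s = 16$ for $n = 8$, giving the upper bound $\chi(G^n) \leq |V(G^n)|/s$ (namely $3, 5, 8, 9, 14, 15$ respectively). For the matching lower bound I would invoke the trivial inequality $\chi(G) \cdot \alpha(G) \geq |V(G)|$ valid for any graph $G$, since a proper colouring partitions the vertices into $\chi$ independent sets of size at most $\alpha$. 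The problem thus reduces to showing $\alpha(G^n) = s$ for each $n$; the inequality $\alpha(G^n) \geq s$ is automatic since each colour class is itself an independent set of size $s$.

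Establishing $\alpha(G^n) \leq s$ is the heart of the argument, and I would proceed case by case. For $n = 3, 4, 5$, a short combinatorial inspection suffices. For instance, the $10$ vertices of $0_{21}$ are the permutations of $(0, 0, 0, 1, 1)$, and two are non-adjacent iff their sets of $1$-coordinates are disjoint, so three mutually non-adjacent vertices would require $6$ pairwise-distinct $1$-positions in $\matR^5$, an immediate contradiction. The cases $-1_{21}$ (the triangular prism) and $1_{21}$ (vertices of $(\pm 1)^5$ with an odd number of minus signs, adjacency meaning differing in exactly two coordinates) are handled by analogous short arguments.

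For $n = 6$, the $1$-skeleton of $2_{21}$ is the complement of the Schl\"afli graph on the $27$ lines of a smooth cubic surface (a check via vertex degrees, $16 + 10 = 26$), so an independent set in $G^6$ corresponds to a collection of pairwise meeting lines; a classical fact is that any such collection has at most $3$ elements, realised by the three lines of a tritangent plane, yielding $\alpha(G^6) = 3$. For $n = 7, 8$, each quartet (respectively hextet) is readily verified to be an independent set by a short calculation in the octonions: two distinct vertices in a quartet have scalar product $0$, while two distinct vertices in a hextet have scalar product $0$ or $-1$; in both cases the value is integer, hence never $\tfrac{1}{2}$. The matching upper bound $\alpha(G^n) \leq s$ can then be obtained from Hoffman's eigenvalue ratio bound applied to the distance-regular graph $G^n$, whose spectrum is computable from the description in terms of the $E_7$ and $E_8$ root systems; alternatively, the Sage code \cite{code} performs the enumeration directly, exploiting the vertex-transitivity of $\Isom(P^n)$ on $V(G^n)$ to fix one element of any hypothetical larger independent set.

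The main obstacle is the computation of $\alpha(G^7)$ and $\alpha(G^8)$: no short combinatorial argument is evident here, and one must appeal either to Hoffman's bound or to a symmetric computer search, both of which rely in an essential way on the exceptional symmetries of the $E_7$ and $E_8$ root systems.
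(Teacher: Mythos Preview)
Your strategy coincides with the paper's: both compute the independence number $\alpha(G^n)$ and invoke $\chi(G^n)\cdot\alpha(G^n)\geq|V(G^n)|$. The paper simply records $\alpha(G^n)=2,2,2,3,4,16$ for $n=3,\ldots,8$ (by hand for $n\leq5$, by the Sage program for $n\geq6$), so your Sage alternative for $n=7,8$ is exactly what the paper does, and the proposal is correct on that reading.

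Be aware, though, that your Hoffman alternative fails for $n=7$. The Gosset graph $G^7$ is the strongly regular graph with parameters $(56,27,10,12)$ and least eigenvalue $-5$, so the ratio bound gives only
\[
\alpha(G^7)\ \leq\ 56\cdot\frac{5}{27+5}\ =\ \frac{35}{4}\ <\ 9,
\]
not $4$. (Hoffman \emph{is} tight for $n=6$ and $n=8$, yielding $3$ and $16$ respectively, so the method is not unreasonable --- it just happens to miss at $n=7$.) A clean computer-free substitute for $n=7$: the $56$ vertices are unit vectors in $\matR^8$ with first coordinate $\tfrac12$, and non-adjacency means inner product in $\{0,-\tfrac12\}$; after projecting away the first coordinate and rescaling to unit length, an independent set becomes a family of unit vectors in $\matR^7$ with pairwise inner products $\leq-\tfrac13$, and then $0\leq\bigl|\sum_i v_i\bigr|^2\leq m-\tfrac13\,m(m-1)$ forces $m\leq4$.

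A small terminological slip: under the usual convention (two lines adjacent when skew), $G^6$ \emph{is} the Schl\"afli graph rather than its complement --- both are $16$-regular on $27$ vertices. Your conclusion that independent sets in $G^6$ correspond to pairwise-meeting lines is correct regardless.
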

\begin{proof}
We can verify by hand when $n\leq 5$ and using our Sage program when $6\leq n \leq 8$ that the maximum number of pairwise disjoint facets in $P^n$ is equal to 2, 2, 2, 3, 4, 16 when $n=3,4,5,6,7,8$. These are precisely the cardinalities of the facets sharing the same colour for all $n$, see Table \ref{colourings:table}. Therefore we cannot find a more efficient colouring than the one listed in the table. 
\end{proof}

\subsection{The last non-zero Betti number}
The Betti numbers $b_i$ and the number $c$ of cusps of each $M^n$ were listed in Table \ref{Mn:intro:table}. In all the cases we have $b_{n-1} = c-1$. Since $M^n$ is the interior of a compact manifold with $c$ boundary components, in general we must have $b_{n-1} \geq c-1$. Therefore here the Betti number $b_{n-1}$ is as small as possible, given the number $c$ of cusps.

\section{The algebraic fibrations} \label{f:section}
We have constructed some hyperbolic manifolds $M^3,\ldots, M^8$, and our aim is now to build some nice maps $f\colon M^n \to S^1$ for all $n=3,\ldots, 8$. We produce these maps by assigning to each $P^n$ an appropriate \emph{state}, as prescribed by  \cite{JNW}. We then study the maps by applying some fundamental results of \cite{BB}. 

\subsection{States} \label{states:subsection}
Let $P \subset \matX^n$ be a right-angled polytope in some space $\matX^n = \matH^n, \matR^n$, or $S^n$. 
Following \cite{JNW}, a \emph{state} is a partition of the facets of $P$ into two subsets, that we denote as I (in) and O (out). Every facet thus inherits a \emph{status} I or O.

Let $P$ be equipped with a colouring with $c$ colours. This induces a free action of $\matZ_2^c$ on the set of all the states of $P$, in the following way. For every $j\in \{1,\ldots, c\}$, the basis element $e_j$ acts by reversing the I/O status of every facet of $P$ coloured by $j$, while leaving the status of the other facets unaffected. The action is free, hence each orbit consists of $2^c$ distinct states.

\subsection{Diagonal maps} \label{diagonal:subsection}
As discovered in \cite{JNW}, the choice of a colouring and a state for a right-angled polytope $P$ induce both a manifold $M$ and a \emph{diagonal map} $M \to S^1$. 
(The construction of \cite{JNW} is actually more general than this, but this interpretation is enough here.)
Shortly:

\begin{tcolorbox}
\begin{center}
colouring + state on $P$ 
$\Longrightarrow$ manifold $M$ + diagonal map $f\colon M \to S^1$.
\end{center}
\end{tcolorbox}

We explain how this works. We already know how a colouring on $P$ produces a manifold $M$, so it remains to explain how a state induces a map $f\colon M\to S^1$. 

The manifold $M$ is tessellated into the $2^c$ polytopes $P_v$ with varying $v\in \matZ_2^c$. Since these are right-angled, the tessellation is dual to a cube complex $C$ with $2^c$ vertices. We work in the piecewise-linear category (see \cite{RS} for an introduction) and think of $C$ as piecewise-linearly embedded inside $M$. If $P$ has some ideal vertices (as it will be the case with all the polytopes $P^n\subset\matH^n$ that we consider here), the complement $M \setminus C$ consists of open cusps, so there is a deformation retraction $r\colon M \to C$. The cube complex $C$ is a spine of $M$.

We indicate the vertex of $C$ dual to $P_v$ simply as $v$, so 
the vertices of $C$ are identified with $\matZ_2^c$. Here $v$ stands both for a $v$ector of $\matZ_2^c$ and a $v$ertex of $C$.

\begin{figure}
 \begin{center}
  \includegraphics[width = 5.5 cm]{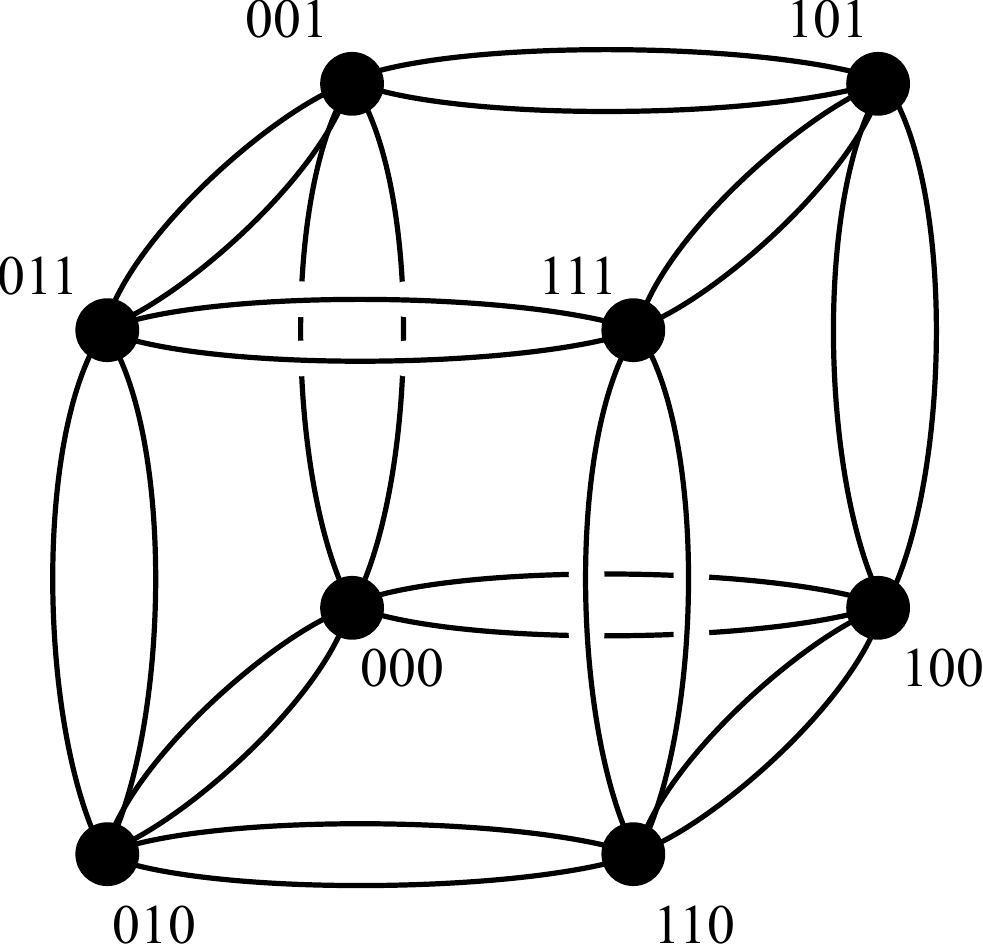}
 \end{center}
 \caption{The 1-skeleton of the dual cubulation $C$ for $M^3$, tessellated into 8 polyhedra $P_v^3$. The polyhedron $P^3$ is 3-coloured, and each colour is painted on two faces. The vertices of $C$ are identified with $\matZ_2^3$. There are two edges connecting $v$ and $v+e_j$ corresponding to the two faces in $P_v$ with the same colour $j$, for each $j=1,2,3$. }
 \label{C:fig}
\end{figure}

The edges of $C$ are dual to the facets of the tessellation: an edge of $C$ connects $v$ and $v+e_j$ if the dual facet $F$ is coloured as $j$. So in particular there are $k$ distinct edges connecting $v$ to $v+e_j$, where $k$ is the number of facets in $P$ coloured with $j$. In all the colourings that we have chosen for the polytopes $P^n$ the number $k$ does not depend on the colour $j$. The 1-skeleton of $C$ for $P=P^3$ is shown in Figure \ref{C:fig}.

\begin{example} If we consider $P = P^8$ with its 15-colouring, there are $2^{15}$ vertices in $C$, and 16 edges connecting $v$ to $v+e_j$ for every $v$ and every $j$.
\end{example}

Let now $s$ be a fixed state for $P$. The state $s$ induces an orientation on all the edges of $C$, in the simplest possible way: consider an edge connecting $v$ and $v+e_j$, where the $j$-th component of $v$ is zero, that is $v_j=0$. The edge is dual to some facet of the tessellation that is a precise identical copy of a facet $F$ of $P$. If the status of $F$ is O, we orient the edge \emph{outward}, that is from $v$ to $v+e_j$, while if it is I we orient it \emph{inward}, from $v+e_j$ to $v$.

\begin{figure}
 \begin{center}
  \includegraphics[width = 7 cm]{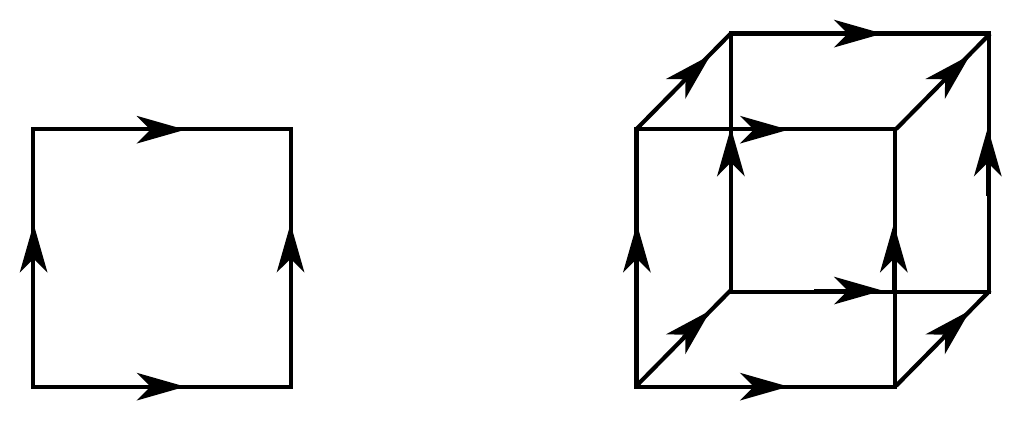}
 \end{center}
 \caption{Every square (and hence every $k$-cube) of the cubulation has its opposite edges oriented coherently as shown here. }
 \label{arrows:fig}
\end{figure}

By construction, this orientation is \emph{coherent}, that is on every square of $C$ (and hence on any $k$-cube) the orientations of two opposite edges match as in Figure \ref{arrows:fig}. This crucial fact allows us to apply Bestvina -- Brady theory \cite{BB}. We identify every $k$-cube of $C$ with the standard $k$-cube $[0,1]^k \subset \matR^k$, so that the orientations on the edges of $C$ match with the orientations of the axis in $\matR^k$. The diagonal map on the standard $k$-cube is
$$[0,1]^k \longrightarrow S^1 = \matR/{\matZ}, \qquad x \longmapsto x_1+ \cdots + x_k.$$
The diagonal maps on the $k$-cubes of $C$ match to give a well-defined continuous piecewise-linear map $C \to S^1$. By pre-composing it with the deformation retraction $r\colon M \to C$, we finally get a \emph{diagonal map} 
$$f\colon M \to S^1.$$
This is the main protagonist of our construction. The diagonal map induces a homomorphism $f_* \colon \pi_1(M) \to \pi_1(S^1) = \matZ$. A dichotomy arises here:

\begin{prop} \label{dichotomy:prop}
Precisely one of the following holds:
\begin{enumerate}
\item The facets of $P$ with the same colour also have the same status. In this case $f$ is homotopic to a constant.
\item There are at least two facets in $P$ with equal colour and opposite status. In this case the homomorphism $f_*\colon \pi_1(M) \to \pi_1(S^1) = \matZ$ is non-trivial with image $2\matZ$.
\end{enumerate}
\end{prop}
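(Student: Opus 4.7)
The plan is to compute $f_*[\gamma]\in\matZ$ for a closed edge-loop $\gamma$ in $C$ by summing signed contributions from each edge and extracting a parity constraint from the closedness of $\gamma$. Since $M$ deformation retracts onto $C$ via $r$ and $f$ factors as the diagonal map composed with $r$, it suffices to work entirely in $C$. The diagonal map on a single $1$-cube $[0,1]\to S^1=\matR/\matZ$ is the quotient map, which represents $+1\in\pi_1(S^1)=\matZ$ when the interval is parameterised in agreement with the prescribed orientation of the edge. Hence for any edge-loop $\gamma=e_1\cdots e_m$ we have $f_*[\gamma]=\sum_i\epsilon_i$, where $\epsilon_i=\pm 1$ according as $e_i$ is traversed with or against its orientation.

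Fix a colour $j$ and partition the colour-$j$ edges of $\gamma$ according to whether the dual facet in $P$ has state O or I and whether the traversal increases or decreases the $j$-th coordinate (lifted to $\matZ$). Let $a_j,b_j,c_j,d_j$ be the counts of traversals of type (positive, O), (positive, I), (negative, O), (negative, I) respectively. An O-edge is oriented from lower to higher $j$-th coordinate and an I-edge the opposite way, so the sign contributions of these four types are $+,-,-,+$ and the total colour-$j$ contribution to $f_*[\gamma]$ is $(a_j+d_j)-(b_j+c_j)$. The closedness of $\gamma$ gives $a_j+b_j=c_j+d_j$ (equal numbers of positive and negative $j$-traversals), so this collapses to $2(a_j-c_j)$, an even integer. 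Summing over $j$ yields $f_*[\gamma]\in 2\matZ$ unconditionally, hence $\mathrm{Im}(f_*)\subseteq 2\matZ$.

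The dichotomy now follows. In case (1), for every $j$ either all colour-$j$ facets are O, forcing $b_j=d_j=0$ and then $a_j=c_j$ by closedness, or they are all I, forcing $a_j=c_j=0$; in either event each colour-$j$ contribution vanishes, so $f_*$ is the zero homomorphism and $f$ is null-homotopic because $S^1$ is a $K(\matZ,1)$. In case (2), pick a colour $j$ admitting two facets $F_{\mathrm O},F_{\mathrm I}$ of opposite statuses and any vertex $v$ of $C$ with $v_j=0$; the edge-loop that runs from $v$ to $v+e_j$ along the edge dual to $F_{\mathrm O}$ and returns along the edge dual to $F_{\mathrm I}$ traverses both edges in the direction of their orientations, contributing $+2$ to $f_*$. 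Combined with the inclusion $\mathrm{Im}(f_*)\subseteq 2\matZ$ we conclude $\mathrm{Im}(f_*)=2\matZ$. The only genuine subtlety is the sign bookkeeping in the colour-$j$ contribution; no further geometric input is required beyond the definition of the diagonal map and the orientation rule of the state.
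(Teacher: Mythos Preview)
Your proof is correct and complete. It differs from the paper's argument in two respects. First, to show $\mathrm{Im}(f_*)\subseteq 2\matZ$, the paper observes that the $1$-skeleton of $C$ is bipartite (by the parity of $v_1+\cdots+v_c$), so every edge-loop has even length and a sum of an even number of $\pm 1$'s is even; you instead carry out the colour-by-colour bookkeeping and obtain $2(a_j-c_j)$ for each colour. Second, in case~(1) the paper builds an explicit lift $\tilde f\colon M\to\matR$ on vertices and extends it diagonally, whereas you deduce $f_*=0$ from the same bookkeeping and then invoke that $S^1$ is a $K(\matZ,1)$. The paper's bipartiteness observation is slicker for the inclusion into $2\matZ$, but your single computation handles both that inclusion and the vanishing in case~(1) at once, which is economical; conversely the paper's explicit lift is more constructive than the homotopy-theoretic appeal to $K(\matZ,1)$. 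For the loop realising $+2$ in case~(2) the two arguments coincide.
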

\begin{proof}
If (1) holds, all the edges joining two given vertices of $C$ are oriented in the same way, and we may lift the map $f\colon M \to S^1$ to a map $\tilde f \colon M \to \matR$ as follows: send every vertex $v\in \matZ_2^c$ of $C$ to the maximum number of edges entering in $v$ and pointing inward from distinct vertices, then extend $\tilde f$ diagonally to cubes. Since $f$ can be lifted, it is homotopic to a constant.

If (2) holds, there are two edges joining the same pair of vertices with opposite orientation, that form a loop that is sent to $\pm 2$ along $f_*$. Moreover $1 \not \in {\rm Im}(f_*)$ because the 1-skeleton of $C$ is naturally bipartited into even and odd vertices, according to the parity of $v_1+\cdots + v_c$.
\end{proof}

The case (1) is not so interesting: all the examples that we construct here on the hyperbolic manifolds $M^n$ will be of type (2). In (2), since ${\rm Im} f_* = 2 \matZ$, one may decide to replace $f$ with a lift along a degree-2 covering of $S^1$ to get a surjective $f_*$.

\begin{cor} \label{trivial:cor}
If all the facets of $P$ have distinct colours, the diagonal map $f$ is always homotopically trivial, for every choice of a state.
\end{cor}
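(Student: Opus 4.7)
The plan is to observe that this corollary is an immediate consequence of the dichotomy established in Proposition \ref{dichotomy:prop}. If all facets of $P$ carry distinct colours, then for each colour $j \in \{1,\ldots,c\}$ there is exactly one facet painted with that colour. In particular, the condition of case (2) of Proposition \ref{dichotomy:prop} — the existence of two facets with equal colour and opposite status — simply cannot hold, since no two distinct facets share a colour in the first place, and this is true regardless of how the state is chosen.

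Therefore case (1) of the dichotomy is the only possibility, and it holds vacuously: the statement ``facets of $P$ with the same colour also have the same status'' is true because the only facet sharing a colour with a given facet $F$ is $F$ itself. The conclusion of case (1) then gives that $f$ is homotopic to a constant.

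There is no real obstacle here — the result is essentially a tautological specialisation of Proposition \ref{dichotomy:prop}. The only substantive point worth emphasising (and which was already verified in the proof of the dichotomy) is that when colours are all distinct, the lift $\tilde f \colon M \to \matR$ of the diagonal map can be constructed by sending each vertex $v \in \matZ_2^c$ of the dual cube complex $C$ to the number of inward-pointing edges at $v$, with this assignment being unambiguous precisely because for each direction $e_j$ there is a unique edge out of $v$ in that direction. This confirms that $f$ factors through $\matR$ and is hence null-homotopic, completing the proof.
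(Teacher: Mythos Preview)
Your proof is correct and matches the paper's approach: the corollary is stated there without proof precisely because it follows immediately from the dichotomy in Proposition \ref{dichotomy:prop}, and your argument spells out exactly this. The additional paragraph about the explicit lift $\tilde f$ is not needed once you have invoked case (1), but it is accurate and does no harm.
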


This inefficient colouring is therefore of no use here.

\begin{figure}
 \begin{center}
  \includegraphics[width = 5.5 cm]{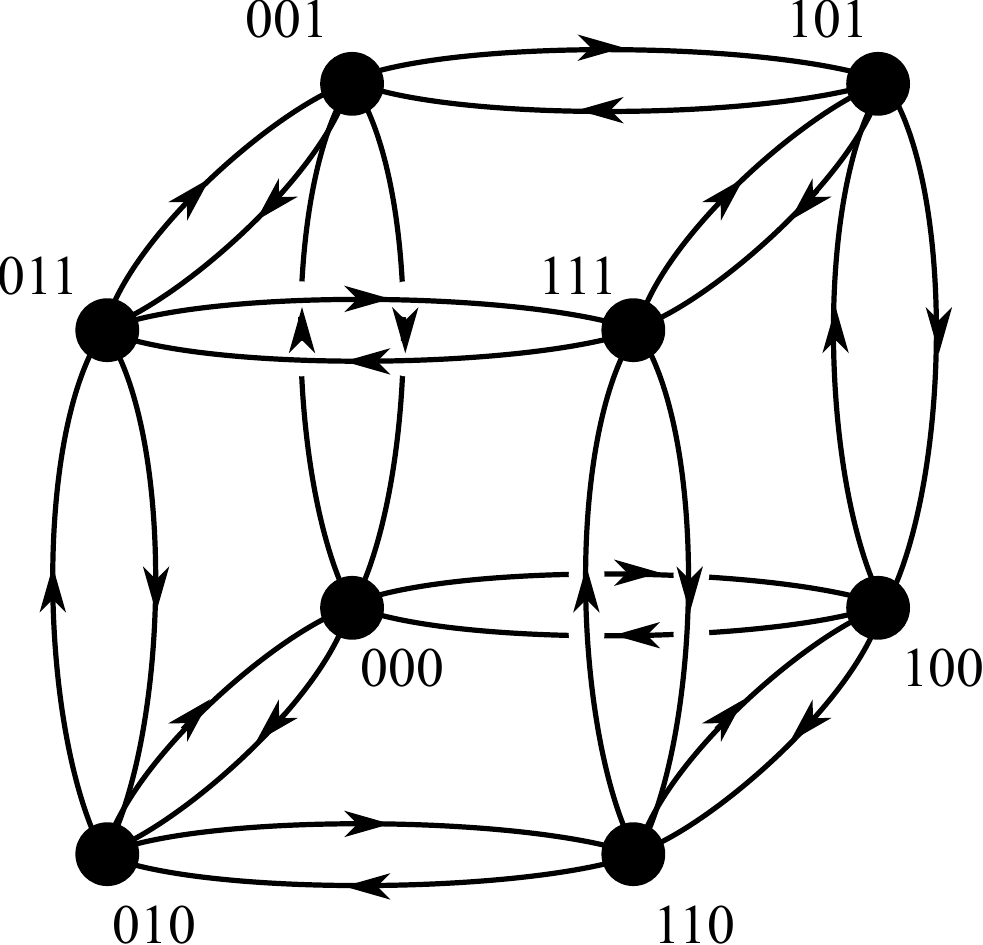}
 \end{center}
 \caption{We assign a state to $P^3$ where faces with the same colours have opposite status. We get the orientation of the 1-skeleton of $C$ shown here.}
 \label{C_arrows:fig}
\end{figure}

\begin{example}
For the 3-coloured $P^3$ we will choose the following state: for every pair of faces with the same colour, assign I to one face and O to the other (the choice of which face gets I and which face gets O will not affect much the result, as we will see). The resulting 1-skeleton of $C$ is then oriented as in Figure \ref{C_arrows:fig}. By Proposition \ref{dichotomy:prop} the homomorphism $f_*$ is not trivial.
\end{example}

\subsection{States and orbits}
Let a right-angled $P\subset \matX^n$ be equipped with a colouring and a state $s$. These determine a diagonal map $f\colon M \to S^1$ as explained above. We now would like to study $f$ and how it depends on $s$. A powerful machinery is already available for this task and is described by Bestvina and Brady in \cite{BB}.

We call $s$ the \emph{initial state}. Recall that $s$ induces a coherent orientation of the edges of the dual cubulation $C$. It also induces a state on every polytope $P_v$ of the tessellation, as follows: every facet $F$ of $P_v$ is dual to an edge $e$ of $C$, and hence inherits a transverse orientation from that of $e$. We assign the status O or I to $F$ according to whether the transverse orientation points outward or inward with respect to $P_v$. It is easy to see that the state induced on $P_v$ is precisely $v(s)$, the result of the action of $v$ on the initial state $s$, as described in Section \ref{states:subsection}.

Summing up, the polyhedron $P_0$ has the initial state $s$, while $P_v$ inherits the state $v(s)$ for each $v\in \matZ_2^c$. The following proposition says that the states that lie in the same orbits produce equivalent diagonal maps.

\begin{prop}
Two states $s,s'$ that lie in the same orbit with respect to the $\matZ_2^c$ action produce two diagonal maps $f,f'\colon M \to S^1$ that are equivalent up to some isometry of $M$, that is there is an isometry $\psi\colon M \to M$ with $f = f'\circ \psi$.
\end{prop}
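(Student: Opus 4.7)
The plan is to take $\psi$ to be the deck transformation of the normal orbifold cover $M \to P$ corresponding to the unique element $w \in \matZ_2^c$ with $s' = w(s)$. Call this isometry $\psi_w \colon M \to M$: it is characterised by sending each tessellating copy $P_v$ to $P_{v+w}$ via the identity map once both are identified with $P$, and it is an isometry because $\matZ_2^c = \Gamma/\Gamma'$ acts on $M$ by deck isometries (as $\Gamma' \triangleleft \Gamma$).

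To verify $f = f' \circ \psi_w$, I would work on the dual cube complex $C$, since both diagonal maps factor through the retraction $r \colon M \to C$ and $\psi_w$ preserves $C$. The key observation is that every edge of $C$ joining $v$ to $v+e_j$ is canonically associated with a single facet $F \subset P$ of colour $j$, and its $\sigma$-orientation (for any state $\sigma$ on $P$) is determined by just two data: the parity $v_j$ of the chosen endpoint and the I/O status of $F$ in $\sigma$. Under $\psi_w$, this edge maps to the edge from $v+w$ to $v+w+e_j$, still canonically associated with the same facet $F$.

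The identity $f = f' \circ \psi_w$ then follows from a two-case check, coordinate by coordinate. If $w_j = 0$, the parity of the $j$-th endpoint coordinate is unchanged under $\psi_w$ and the status of $F$ is unchanged in $s' = w(s)$, so the $s'$-orientation of the image edge matches the $s$-orientation of the source edge. If $w_j = 1$, both the parity and the status of $F$ flip simultaneously, and the two flips cancel, once again yielding matching orientations. Consequently $\psi_w$ restricts on each $k$-cube of $C$ to an isometry that intertwines the $s$- and $s'$-adapted identifications with $[0,1]^k$, and the intrinsic formula $(x_1,\ldots,x_k) \mapsto x_1 + \cdots + x_k \pmod 1$ yields $f = f' \circ \psi_w$ on $C$, and hence on $M$ after pre-composing with $r$. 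The only delicate point is the bookkeeping of the two simultaneous flips in the second case, but it resolves immediately.
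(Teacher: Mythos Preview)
Your proposal is correct and follows exactly the same approach as the paper: the isometry $\psi$ is the deck transformation $P_v \mapsto P_{v+w}$ via the identity. The paper's proof consists of only two sentences and simply asserts $f = f' \circ \psi$, whereas you have supplied the edge-by-edge verification via the two-case parity check that the paper leaves implicit.
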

\begin{proof}
If $s' = w(s)$ for some $w\in \matZ_2^c$, we pick the isometry $\psi\colon M \to M$ that sends $P_v$ to $P_{v+w}$ via the identity map. We get $f=f'\circ \psi$.
\end{proof}

\subsection{Ascending and descending links}
Let a right-angled $P\subset \matX^n$ be equipped with a state $s$. Let $Q$ be a Euclidean polytope combinatorially dual to $P$. When $P=P^3, \ldots, P^8$ of course $Q$ is a Gosset polytope. The state $s$ induces a dual \emph{state} on $Q$, that is the assignment of a \emph{status} I or O to each vertex of $Q$.

If we remove the interiors of the $(n-1)$-octahedral facets from $\partial Q$ (that correspond to the ideal vertices of $P$) we are left with a flag simplicial complex $\dot Q$. This holds because $P$ is right-angled, and hence simple; as a consequence, every face of $Q$ is a simplex, except the $(n-1)$-octahedral facets dual to the ideal vertices of $P$.

Following \cite{BB}, we define the \emph{ascending link} (respectively, \emph{descending link}) as the subcomplex of $\dot Q$ generated by the vertices with status O (respectively, I). Since $\dot Q$ is a flag complex, these subcomplexes are determined by their 1-skeleton.

Let now $P$ be equipped with both a colouring and a state. We get a manifold $M$ and a diagonal map $f\colon M \to S^1$.
For every vertex $v\in \matZ_2^c$ of the dual cubulation $C$, the link of $v$ in $C$
is precisely the simplicial complex $\lk(v) = \dot Q$, and it inherits the state $v(s)$ of $P_v$. The status of a vertex of $\lk(v)$ is I or O according to whether the corresponding oriented edge of $C$ points inward or outward with respect to $v$. The ascending and descending links at $v$ are denoted respectively as $\lk_\uparrow(v)$ and $\lk_\downarrow(v)$, and they are disjoint subcomplexes of $\lk(v)$.

The diagonal map $f\colon M \to S^1$ induces a homomorphism $f_*\colon \pi_1(M) \to \matZ$. We are interested in its kernel $H$.

\begin{teo} \cite[Theorem 4.1]{BB} \label{BB:teo}
The following holds:
\begin{itemize}
\item If $\lk_\uparrow(v)$, $\lk_\downarrow(v)$ are connected for every $v$, then $H$ is finitely generated.
\item If $\lk_\uparrow(v)$, $\lk_\downarrow(v)$ are simply connected for every $v$, $H$ is finitely presented.
\end{itemize}
\end{teo}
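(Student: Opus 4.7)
The plan is to apply the combinatorial Morse theory of \cite{BB} to an infinite cyclic cover. Since the deformation retraction $r \colon M \to C$ is a homotopy equivalence, we have $\pi_1(M) = \pi_1(C)$ and it suffices to study $H = \ker(f_* \colon \pi_1(C) \to \matZ)$. First I would pass to the infinite cyclic cover $\hat C \to C$ associated to $f_*$: it has deck group $\matZ$, fundamental group $H$, and the diagonal map lifts to a $\matZ$-equivariant continuous map $\hat f \colon \hat C \to \matR$ that is affine on every cube with generic diagonal slope and strictly monotone on every edge.

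The key observation is that $\hat f$ is a combinatorial Morse function in the Bestvina--Brady sense: its critical set is the vertex set, and at each vertex $\hat v$ lying over $v \in C$ the local combinatorics coincides with that of $v$, so the link of $\hat v$ in $\hat C$ is naturally identified with $\lk(v) = \dot Q$, and splits into the disjoint subcomplexes $\lk_\uparrow(\hat v)$ and $\lk_\downarrow(\hat v)$ according to the orientation on each incident edge. The crucial Morse lemma of \cite[Section 2]{BB} then says: if $[a,b]$ contains exactly one critical value $c = \hat f(\hat v)$, the superlevel sublevel set $\hat f^{-1}((-\infty, b])$ is obtained from $\hat f^{-1}((-\infty, a])$, up to homotopy equivalence, by attaching the cone on $\lk_\downarrow(\hat v)$ along the descending link itself; dually for $-\hat f$ with $\lk_\uparrow$.

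Because $\matZ$ acts cocompactly on $\hat C$ with compact fundamental domain $C$, each bounded sublevel set $\hat C_{[-n,n]} := \hat f^{-1}([-n,n])$ is compact, only finitely many $\matZ$-orbits of critical vertices appear in it, and $\hat C = \bigcup_n \hat C_{[-n,n]}$. Assume now that $\lk_\uparrow(v)$ and $\lk_\downarrow(v)$ are connected for every $v$ of $C$. Then every cone-attachment described above is performed along a connected subcomplex, so it preserves $\pi_0$ and is surjective on $\pi_1$. Hence the inclusions $\hat C_{[-n,n]} \hookrightarrow \hat C_{[-n-1, n+1]}$ are $\pi_0$-bijective and $\pi_1$-surjective. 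A standard Brown-type finiteness argument applied to this exhaustion by compact, connected subcomplexes then produces a finite generating set for $H = \pi_1(\hat C)$. If in addition the links are simply connected, van Kampen shows that the cone attachments are $\pi_1$-isomorphisms, so every loop in $\hat C$ bounds a disk already contained in some $\hat C_{[-n,n]}$; combined with finite generation this yields a finite presentation of $H$.

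The main technical obstacle is the precise form of the Morse lemma: one has to verify that a passage through a critical value $c$ in the affine-diagonal setting really is modelled by coning off the descending (resp.\ ascending) link as defined combinatorially from the I/O state, rather than by some other subcomplex of $\lk(v)$. This is the content of \cite[Lemma 2.5]{BB}, and its proof is an explicit deformation retraction on each cube using the diagonal coordinate. Once this identification is granted, everything else is the standard Bestvina--Brady exhaustion argument, so the theorem follows from Theorem 4.1 of \cite{BB} applied to our specific cube complex $C$ and diagonal map $f$.
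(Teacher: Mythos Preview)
The paper does not supply its own proof of this statement: it simply records it as \cite[Theorem~4.1]{BB} and moves on. Your proposal is a faithful outline of the Bestvina--Brady argument itself --- lift $f$ to a $\matZ$-equivariant Morse function on the infinite cyclic cover $\hat C$, use the Morse lemma to identify the change of homotopy type across a critical value with a cone over the descending (or ascending) link, and run the standard exhaustion/Brown argument --- so there is nothing to compare beyond noting that you have unpacked the citation rather than merely invoking it. One small slip: you wrote ``superlevel sublevel set'' where you mean ``sublevel set'', and the compactness of $\hat C_{[-n,n]}$ uses that $C$ is a \emph{finite} cube complex (which it is here, being dual to a finite tessellation), a hypothesis worth stating explicitly.
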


\subsection{Legal states}
Following \cite{JNW}, a state $s$ on $P$ is \emph{legal} if the ascending and descending links that it determines on the dual flag simplicial complex $\dot Q$ are both connected. The group $\matZ_2^c$ acts on the set of all states of $P$, and an orbit is \emph{legal} if it consists only of legal states. As noted in \cite{JNW}, Theorem \ref{BB:teo} implies the following. 
\begin{cor} \label{legal:cor}
A legal orbit defines a diagonal map $f\colon M \to S^1$ with finitely generated $H= \ker f_*$. 
\end{cor}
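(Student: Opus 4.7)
The plan is to reduce the statement directly to Theorem \ref{BB:teo}, with the only substantive observation being that the collection of states \emph{appearing} at the various vertices of the dual cubulation $C$ is precisely the orbit of the initial state $s$ under the $\matZ_2^c$-action.

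First, I would recall the description of the state induced on each tile of the tessellation: having fixed an initial state $s$ on $P$, every polytope $P_v$ in the tessellation of $M$ inherits the state $v(s)$, as was established when setting up the diagonal map. Consequently, for each vertex $v \in \matZ_2^c$ of the dual cube complex $C$, the simplicial link $\lk(v) = \dot Q$ is decorated by the state $v(s)$, and the subcomplexes $\lk_\uparrow(v)$ and $\lk_\downarrow(v)$ are exactly the ascending and descending subcomplexes of $\dot Q$ determined by the state $v(s)$.

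Next, I would invoke the hypothesis. By assumption the orbit of $s$ is legal, i.e.\ every state $v(s)$ with $v \in \matZ_2^c$ is legal. By the very definition of a legal state, for every such $v$ both the ascending and the descending link of $v(s)$ on $\dot Q$ are connected. Translating this through the identification of the previous paragraph, $\lk_\uparrow(v)$ and $\lk_\downarrow(v)$ are connected for every vertex $v$ of $C$.

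Finally, I would apply the first bullet of Theorem \ref{BB:teo} to the diagonal map $f\colon M \to S^1$, and conclude that $H = \ker f_*$ is finitely generated. There is essentially no obstacle here: the corollary is a direct packaging of Theorem \ref{BB:teo} together with the observation that the $\matZ_2^c$-orbit of $s$ records precisely the local data at every vertex of $C$, so the ``legal orbit'' hypothesis is exactly what is needed to verify the hypothesis of Theorem \ref{BB:teo} uniformly over all $v$.
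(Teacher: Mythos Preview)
Your proposal is correct and is exactly the argument the paper intends: the corollary is stated in the paper as an immediate consequence of Theorem~\ref{BB:teo}, and your proof simply unpacks this by observing that the states at the vertices of $C$ are precisely the orbit $\{v(s):v\in\matZ_2^c\}$, so legality of the orbit gives connectedness of all ascending and descending links.
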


The chase of a legal orbit is the combinatorial game introduced in \cite{JNW}. After introducing the rules of the game, the authors exhibited some legal orbits on two remarkable right-angled polytopes in $\matH^4$, namely the ideal 24-cell and the compact right-angled 120-cell \cite{JNW}, so providing the first algebraically fibering hyperbolic 4-manifolds. Here we play with the right-angled polytopes $P^n$ and find some legal orbits on all of them. More than that, we find some even better kind of orbits in the cases $n=3,7,8$, that we call \emph{1-legal}.

\subsection{1-legal states}
We extend the nomenclature of \cite{JNW} by saying that a state $s$ is \emph{1-legal} if its ascending and descending links are both simply connected. An orbit is \emph{1-legal} if it consists only of 1-legal states. Here is a consequence of Theorem \ref{BB:teo}.

\begin{cor} \label{1-legal:cor}
A 1-legal orbit defines a diagonal map $f\colon M \to S^1$ with finitely presented $H= \ker f_*$.
\end{cor}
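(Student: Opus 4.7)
The plan is to unwind the definitions and reduce directly to the second bullet of Theorem \ref{BB:teo}. The key observation is that the hypothesis ``1-legal orbit'' has been engineered to match precisely the ``for every $v$'' quantifier in the Bestvina--Brady statement, so no further work beyond a bookkeeping step is required.

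First I would recall that the diagonal map $f\colon M \to S^1$ is already manufactured from the chosen colouring and the initial state $s$ by the construction in Section \ref{diagonal:subsection}. For each vertex $v\in \matZ_2^c$ of the dual cubulation $C$, the link $\lk(v)$ is canonically identified with the flag simplicial complex $\dot Q$, and the state inherited on $\lk(v)$ from the orientation of the edges incident to $v$ is exactly $v(s)$, the translate of $s$ under the $\matZ_2^c$-action on states. This identification was recorded just before Theorem \ref{BB:teo} and is the bridge between the combinatorial data on $P$ and the local geometry of $M$ at each vertex of the spine.

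Next I would invoke the 1-legal hypothesis. By assumption, the orbit $\matZ_2^c \cdot s$ consists entirely of 1-legal states, so in particular every translate $v(s)$ is 1-legal. Unpacking the definition, this means that for every $v \in \matZ_2^c$ the ascending link $\lk_\uparrow(v)$ and the descending link $\lk_\downarrow(v)$ — which are by construction the subcomplexes of $\dot Q = \lk(v)$ spanned by the vertices with status O and I respectively under the state $v(s)$ — are both simply connected.

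Finally I would feed this into the second bullet of Theorem \ref{BB:teo}: since $\lk_\uparrow(v)$ and $\lk_\downarrow(v)$ are simply connected at \emph{every} vertex $v$ of $C$, the kernel $H = \ker f_*$ is finitely presented, as claimed. I do not see a genuine obstacle here; the whole content of the corollary is the formal observation that the legality condition, imposed on a full orbit, is exactly what guarantees the pointwise hypothesis of Bestvina--Brady at every vertex of the cubical spine. The nontrivial work lies elsewhere — namely, in later sections, in exhibiting actual 1-legal orbits on $P^7$ and $P^8$.
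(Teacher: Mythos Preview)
Your proposal is correct and matches the paper's approach exactly: the paper states this corollary without proof, presenting it simply as an immediate consequence of the second bullet of Theorem~\ref{BB:teo}, and your argument spells out precisely that deduction.
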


\subsection{The Euler characteristic check}
In the following pages we will double count the Euler characteristic of our manifolds as a safety check. If a colouring and a state on $P$ produce a manifold $M$ and a diagonal function $f\colon M \to S^1$, we always have
\begin{equation} \label{chi:eqn}
\chi(M) = \sum_{v \in \matZ_2^c} \big(1-\chi(\lk_{\uparrow}(v)) \big).
\end{equation}
The same formula holds with the descending link $\lk_{\downarrow}(v)$. We say that the integer $1-\chi(\lk_\uparrow(v))$ is the \emph{contribution of $v$} to the Euler characteristic of $M$. Note that a contractible ascending link contributes with zero, while a $k$-sphere contributes with $(-1)^{k+1}$.

We now construct a legal orbit on each individual polytope $P^3, \ldots, P^8$. We have used a code written with Sage to analyse all these cases; both the code and the resulting data are available from \cite{code}.

\subsection{A 1-legal orbit for $P^3$}
In the 3-colouring of $P^3$ the facets are partitioned into three pairs. For every pair, we assign the status O to one facet and I to the other, arbitrarily. The orbit of this state $s$ is independent of this choice and consists precisely of all the $2^3=8$ states that can be constructed in this way.

\begin{figure}
 \begin{center}
  \includegraphics[width = 9 cm]{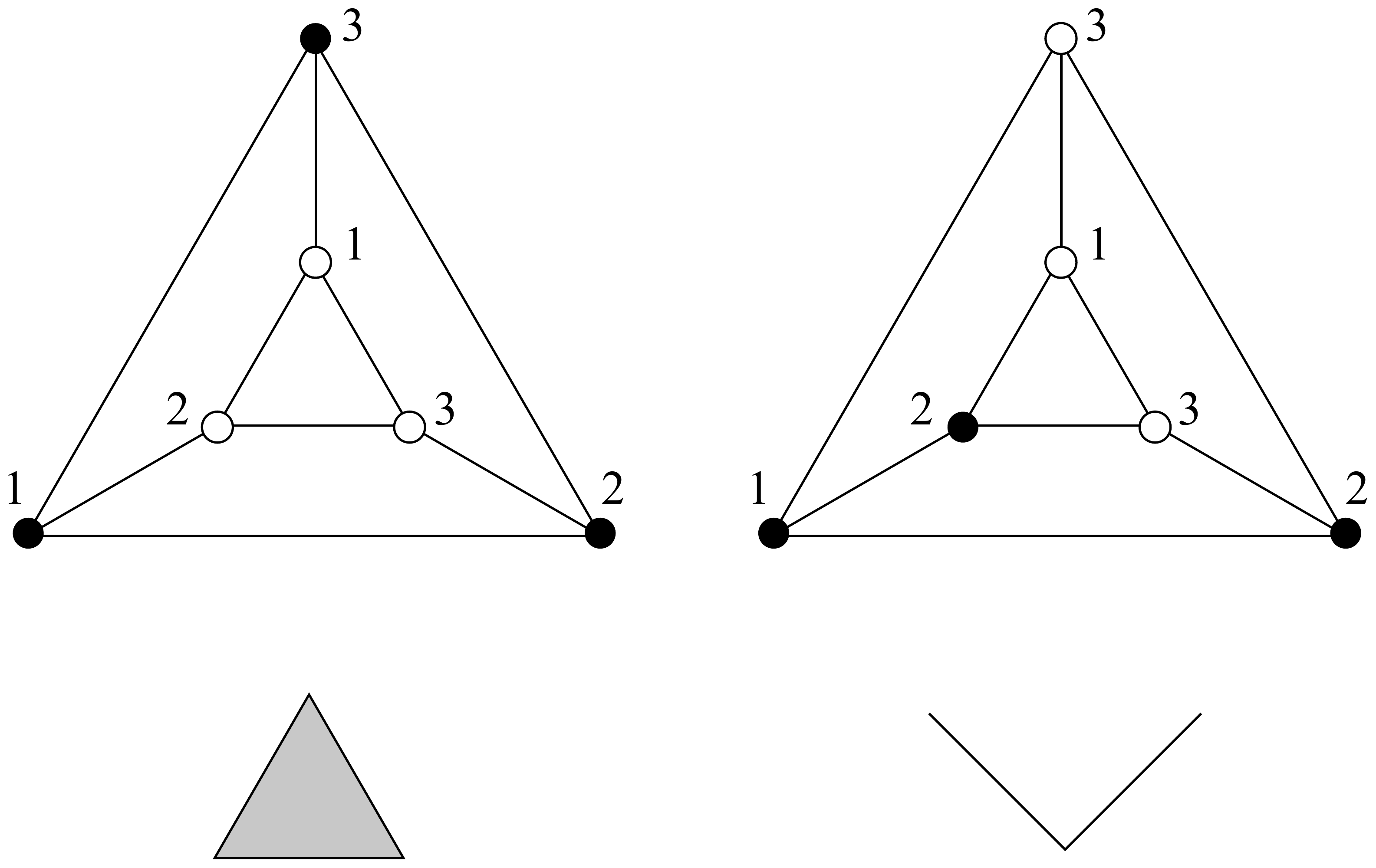}
 \end{center}
 \caption{We exhibit a state by colouring the vertices in black and white, with black (white) corresponding to the status I (O). There are only two states in the orbit of $P^3$ up to isomorphism, and in both cases the ascending and descending links are contractible: they are a triangle and two segments joined along an endpoint.}
 \label{P3_states:fig}
\end{figure}

By direct inspection we find that the 8 states reduce to 2 up to isomorphism, and they are shown in Figure \ref{P3_states:fig}. The ascending and descending links are both either a triangle or two segments connected along an endpoint. In both cases they are contractible. One can in fact verify that the conditions of \cite[Theorem 15]{BM} are satisfied and hence
the diagonal map $f\colon M^3 \to S^1$ can be smoothened to become a fibration (we will not need this here).

The ascending and descending links are simply connected and hence the orbit is 1-legal. By Corollary \ref{1-legal:cor} the kernel $H$ of $f_*\colon \pi_1(M^3) \to \matZ$ is finitely presented: it is the fundamental group of the surface fiber of the fibration $f\colon M^3 \to S^1$.

The formula (\ref{chi:eqn}) holds since $\chi(M^3)=0$ and each contractible link contributes with zero to the sum.

\subsection{A legal orbit for $P^4$}
In the 5-colouring of $P^4$ the facets are partitioned into five pairs. As in the previous case, we assign the statuses O and I arbitrarily to each pair. The orbit consists of all the states that assign distinct statuses to each pair. We get $2^5=32$ states.

\begin{figure}
 \begin{center}
  \includegraphics[width = 12.5 cm]{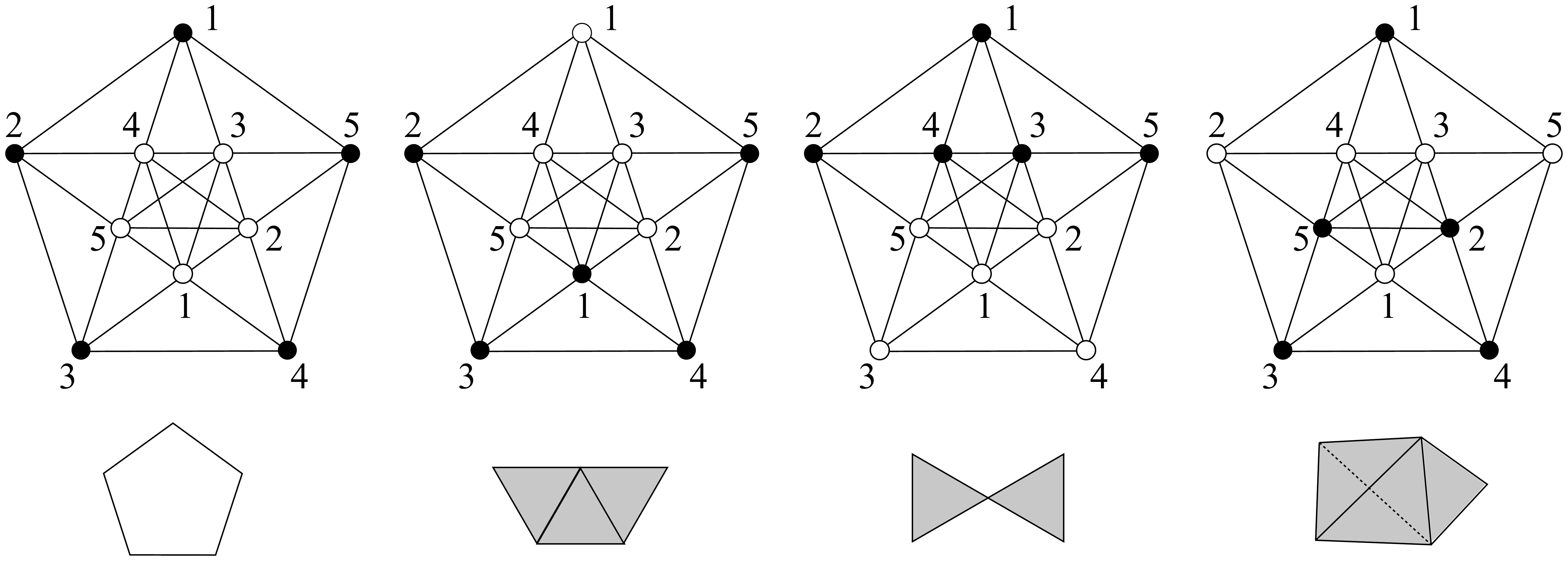}
 \end{center}
 \caption{We exhibit a state by colouring the vertices in black and white, with black (white) corresponding to the status I (O). There are only four states in the orbit of $P^4$ up to isomorphism. We show here the descending link, generated by the black vertices. In the first case we get a circle, while in the other cases we always get a contractible complex made of 3 triangles, 2 triangles, and 1 tetrahedron and 1 triangle respectively. The ascending links are of the same types.}
 \label{P4_states:fig}
\end{figure}

By direct inspection we find that these states reduce to 4 up to isomorphism, depicted in Figure \ref{P4_states:fig}. As shown in the figure, the ascending and descending links are always connected, so the orbit is legal. However, we note that the orbit is not 1-legal, since in the first case both the ascending and descending link are circles. The first case occurs only in 2 of the 32 states.

In fact, one can verify that the ascending and descending links in the first case form a Hopf link in $S^3$, if considered in the boundary of the Gosset polytope, and that the conditions of \cite[Theorem 15]{BM} are satisfied, so the diagonal function $f\colon M^4 \to S^1$ can be smoothened to a circle-valued Morse function with two index-2 critical points. This is the best that we can get in dimension 4, since no fibrations may occur on an even-dimensional hyperbolic manifold (we will not need these facts here, for more details see \cite{BM}).

By Corollary \ref{legal:cor} the kernel $H$ of $f_*\colon \pi_1(M^4) \to \matZ$ is finitely generated. The formula (\ref{chi:eqn}) holds since $\chi(M^4)=2$ and the 2 states of the first kind contribute each with 1, while all the others contribute with 0.

\subsection{A legal orbit for $P^5$}
In the 8-colouring for $P^5$ the facets are partitioned into 8 pairs. As in the previous cases, we assign the statuses O and I arbitrarily to each pair. 
The orbit consists of all the states that assign different statuses to each pair. We get $2^8 = 256$ states. 

\begin{figure}
 \begin{center}
  \includegraphics[width = 10 cm]{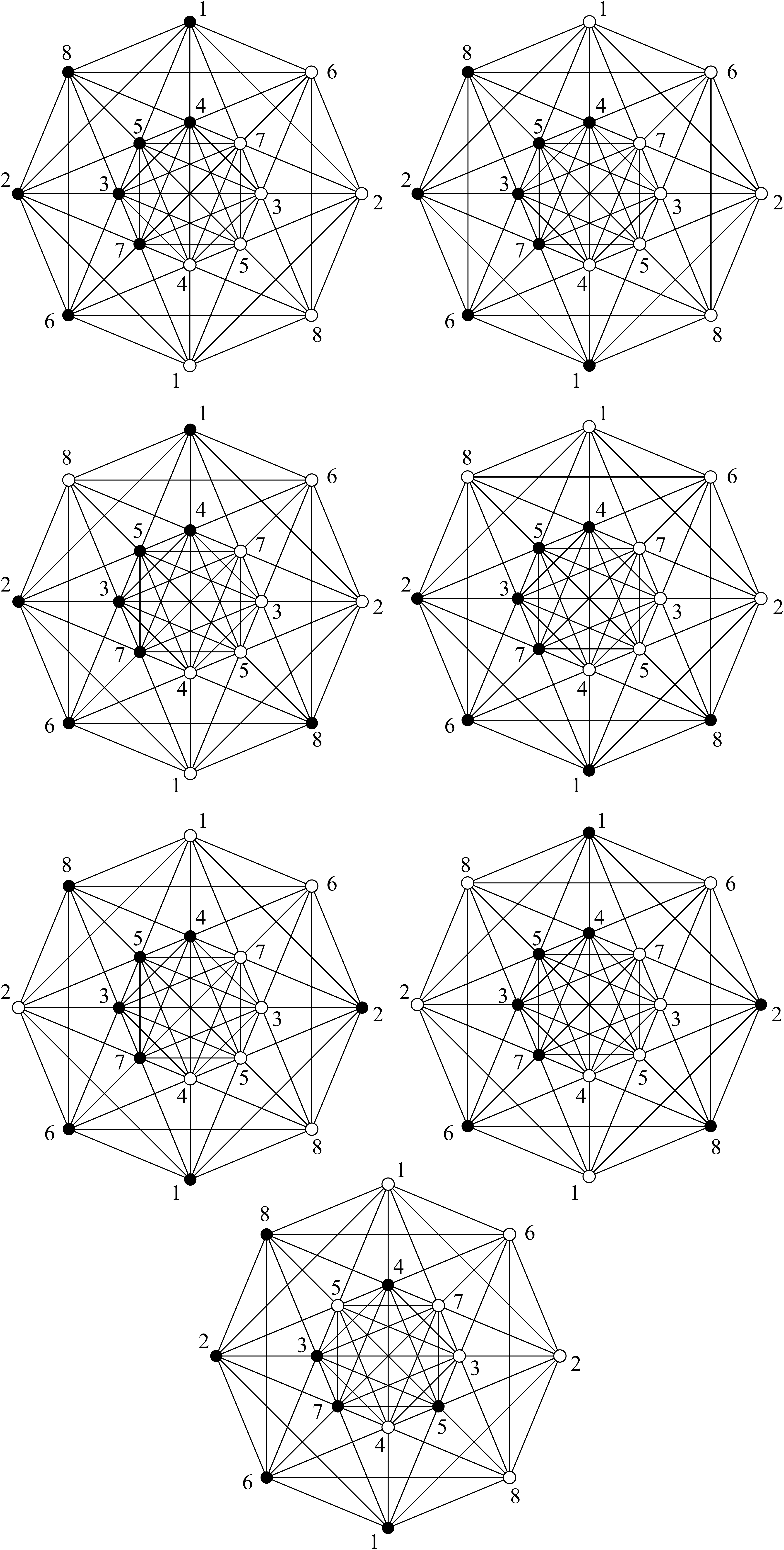}
 \end{center}
 \caption{Every ascending or descending link for $P^5$ is isomorphic to one of the 7 descending links shown here. }
 \label{P5_states:fig}
\end{figure}

Each state produces a pair of ascending and descending links. Since these are flag simplicial complexes, they are determined by their 1-skeleta. Either using our program with Sage or by direct inspection, we find that the resulting 512 graphs reduce to only 7 up to isomorphism. These 7 graphs are those generated by the black vertices in Figure \ref{P5_states:fig}.

\begin{figure}
 \begin{center}
  \includegraphics[width = 2.5 cm]{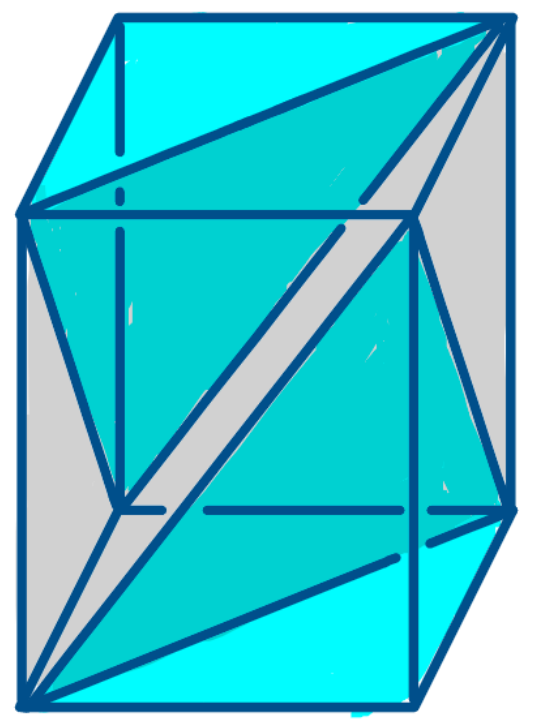} \hspace{1 cm}
  \includegraphics[width = 2.5 cm]{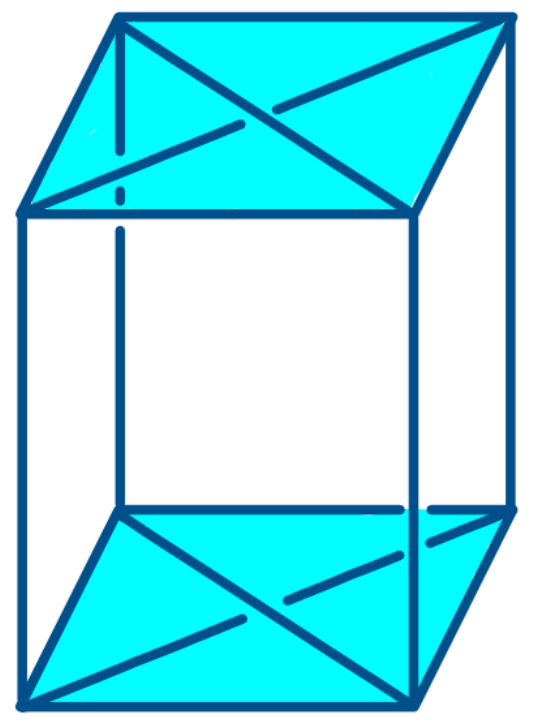}
 \end{center}
 \caption{Two simplicial complexes that collapse to $S^2$ and $\vee_3 S^1$. The first is the boundary of an octahedron with two tetrahedra attached to a pair of opposite faces. The second consists of two tetrahedra and four edges joining them.}
 \label{due_casi:fig}
\end{figure}

We can check by hand (or with our Sage program) that the first 4 graphs in the figure generate a contractible simplicial complex. The remaining three generate a simplicial complex that collapses respectively to $S^2$, a wedge of three circles $\vee_3 S^1$, and $S^3$. The complexes that collapse to $S^2$ and $\vee_3 S^1$ are shown in Figure \ref{due_casi:fig}. The complex that collapses to $S^3$ is actually homeomorphic to $S^3$, and it is the boundary of a 4-octahedron, decomposed into 16 tetrahedra. It corresponds dually to an ideal vertex of $P^5$.

In all the cases the simplcial complex is connected, so the orbit is legal. It is not always simply connected, so the orbit is not 1-legal. By Corollary \ref{legal:cor} the kernel $H$ of $f_*\colon \pi_1(M^5) \to \matZ$ is finitely generated.

The formula (\ref{chi:eqn}) holds since $\chi(M^5)=0$ and using Sage we find that we get 32 occurrences of $S^2$, 8 of $\vee_3 S^1$, and 8 of $S^3$. Their contributions to the Euler characteristic are $32\cdot (-1) + 8\cdot 3 + 8 \cdot 1 = 0$.

\subsection{A legal orbit for $P^6$}
In the 9-colouring for $P^6$ the 27 facets are partitioned into 9 triplets.
As opposite to the previous cases, there does not seem to be a natural choice of a state here. However, a brute computer search shows that there are many legal orbits for $P^6$. For instance, we may take $s$ as the state where the first vertex in each triple listed in Section \ref{M6:subsection} is O and the remaining two are I. By using our Sage program we find that the orbit of this state is legal. By Corollary \ref{legal:cor} the kernel $H$ of $f_*\colon \pi_1(M^6) \to \matZ$ is finitely generated.

As we said, a computer search shows that many initial states $s$ yield legal orbits. We could not find a single 1-legal orbit, but admittedly we have not checked all the possible initial states. 

\subsection{A 1-legal orbit for $P^7$}
In the 14-colouring for $P^7$ the 56 facets are partitioned into 14 quartets. We see $P^7$ as the facet of $P^8$ dual to the vertex $1$ in $4_{21}$. We will define below a state for $P^8$, and this will induce one $s$ for $P^7$ in the obvious way: every facet of $P^7$ inherits the status of the adjacent facet in $P^8$. 

The state $s$ inherited in this way turns out to be \emph{balanced} with respect to the colouring, in the following sense: there is an even number $2m$ of facets sharing the same colour, and precisely half of them $m$ are given the status I, and the other half $m$ the status O. If a state is balanced, then every other state in the orbit is also balanced. The states that we have chosen for $P^3, P^4$, and $P^5$ are balanced: for these polyhedra we have $m=1$ and there was in fact a unique orbit of balanced states. Here $m=2$, so in each quartet two facets receive the status I and two the status O. 

The orbit of $s$ consists of $2^{14}$ states, each contributing with an ascending and descending link. Using Sage we are pleased to discover that the resulting $2^{15} = 32768$ graphs reduce to only 106 up to isomorphism. (This is probably due to the many symmetries of $s$ that are inherited from $P^8$.)

Using Sage we also see that all the simplicial complexes generated by the 106 graphs are simply connected. Therefore the orbit is 1-legal. With Sage we have also checked (\ref{chi:eqn}). All the data can be found in \cite{code}. By Corollary \ref{1-legal:cor} the 
kernel $H$ of $f_*\colon \pi_1(M^7) \to \matZ$ is finitely presented.

\subsection{A 1-legal orbit for $P^8$}
In the 15-colouring for $P^8$ the 240 facets are partitioned into 15 hextets.
How can we find a good initial state $s$ for $P^8$? The numbers are overwhelming: the polytope $P^8$ has 240 facets, so there are $2^{240}$ possible states to choose from. Each orbit consists of $2^{15}$ distinct states, and we would like to find one orbit where \emph{all} these $2^{15}$ states are legal, or even better 1-legal. A brute force computer search is out of reach.

To define a legal state we take inspiration from the 24-cell sitting inside quaternions space, since this has some strong analogies with the Gosset polytope $4_{21}$ sitting in octonions space as we already noticed above. We have already exploited this analogy when we fixed a convenient colouring for $P^8$, and we do it again now to design a convenient state. 

\subsubsection*{A state for the 24-cell}
A nice legal state for the 24-cell was constructed in \cite{BM} as follows. Recall that its 24 vertices are divided into three octets: these are $ \pm 1, \pm i, \pm j, \pm k$,
the elements $\tfrac 12 (\pm 1 \pm i \pm j \pm k) $ with an even number of minus signs, and those with an odd number of minus signs. 

Consider the group $G=\{\pm 1, \pm i\}$ and its action on the 24 vertices by left-multiplication. We can verify easily that each octet is invariant by this action, and is subdivided into two orbits of four elements each. We assign arbitrarily the status I to one orbit, and O to the other. The resulting state $s$ is balanced (see the definition above), and also legal, as it was in fact already observed in \cite{JNW}. The ascending and descending links are each homeomorphic to a $G$-invariant annulus as in Figure \ref{nastri:fig}, so they are connected but not simply connected (the state is not 1-legal). The two $G$-invariant annuli form altogether a banded Hopf link in $S^3$.

\begin{figure}
 \begin{center}
  \includegraphics[width = 8 cm]{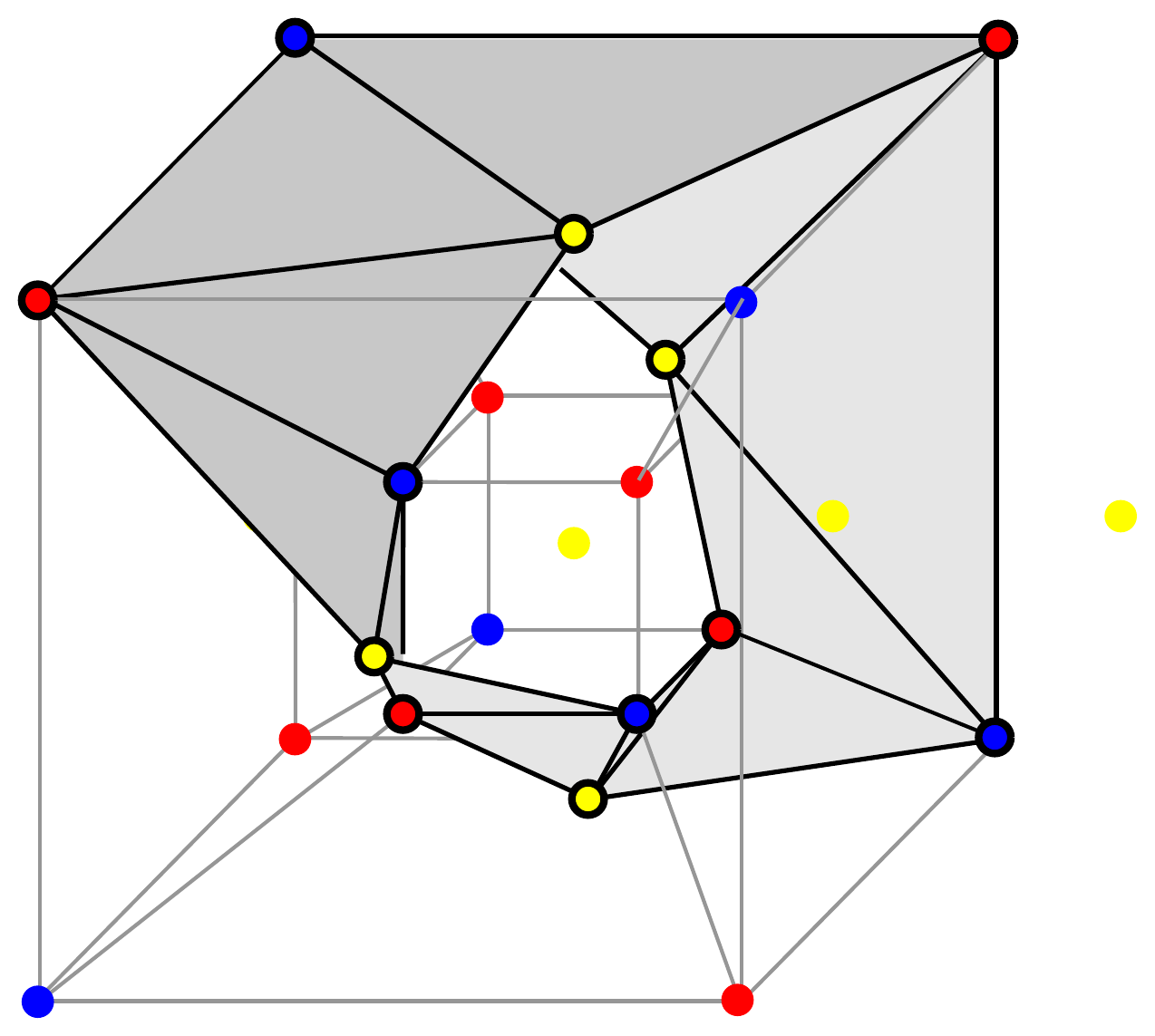}
 \end{center}
 \caption{The ascending and descending link are both an annulus decomposed into 12 triangles, and altogether they form two annuli that collapse onto a Hopf link in $S^3$. The figure (taken from \cite{BM}) shows the vertices of the 24-cell, with their 3-colouring (Blue / Red / Yellow) and their state (the vertices with a O status have an additional black circle). Only some edges of the 24-cell are shown for the sake of clarity: more edges should be added that connect each yellow vertex and its 8 neighbours.}
 \label{nastri:fig}
\end{figure}

The orbit of $s$ along the action of $\matZ_2^3$ consists of the $2^3$ states obtained from $s$ by reversing the I/O status of some octet. The geometry of the 24-cell is so extraordinary that these $2^3$ states are all isomorphic \cite{BM}. In particular, the orbit is legal (but it is not 1-legal). The choice of which orbit is I and which is O inside each octet is irrelevant, since different choices lead to the same orbit.

\subsubsection*{A state for $4_{21}$}
We now try to mimic the above construction for $4_{21}$. The 240 vertices are partitioned into 15 hextets, that is $\pm 1,\pm e_1, \pm e_2, \pm e_3, \pm e_4, \pm e_5, \pm e_6, \pm e_7$, the elements
$\tfrac 12(\pm 1 \pm e_{n} \pm e_{n+1} \pm e_{n+3})$ and
$\tfrac 12(\pm e_{n+2} \pm e_{n+4} \pm e_{n+5} \pm e_{n+6})$ with an even number of minus signs, and those with an odd number of minus signs, with the integer $n$ varying modulo 7.

It is now natural to consider the quaternion group $Q = \{ \pm 1, \pm e_1, \pm e_2, \pm e_4\}$ and its ``action'' on the 240 vertices of $4_{21}$ by left-multiplication. This is the analogue of the group $G=\{\pm 1, \pm i\}$ defined above, roughly because taking quaternions inside octonions looks like taking complex numbers inside quaternions. However, this is not really a group action because octonions are not associative, and hence we may have that $e_1(e_2(x)) \neq (e_1e_2)(x)$.
Therefore some caution is needed.

We already know that the set $S=\{\pm 1, \pm e_1, \ldots, \pm e_7\}$ acts freely and transitively by left-multiplication on every hextet (that is, for every pair of elements in the hextet there is a unique element in $S$ whose left-multiplication sends the first to the second). We pick the following 15 base elements, one inside each hextet:
$$1,\quad 1+e_n+e_{n+1}+e_{n+3},\quad -1+e_n+e_{n+1}+e_{n+3}$$
where $n$ runs modulo 7. We consider inside each hextet the 8 elements obtained by left-multiplying the base element by the elements of $Q$. We assign to these 8 elements the status O, and the status I to the remaining 8 of the hextet.
We have defined a balanced state $s$. The orbit consists of $2^{15}$ balanced states. 

\begin{rem} \label{analogy:rem}
By analogy with the 24-cell, it would be tempting to guess that the $2^{15}$ states in the orbit are all isomorphic, and maybe that the ascending and descending links are homotopic to two copies of $S^3$ linked in $S^7$. This is however impossible by a Euler characteristic argument, due to the fact that the 24-cell has $\chi = 1$ while $\chi(P^8) = 17/2$ is much bigger. In general, one should not push the analogies too far: the situation is intrinsically more complicated here. We will come back to this point below.
\end{rem}

Using our Sage code we have determined the ascending and descending links of each of the $2^{15} = 32768$ states. Note that each graph can have up to 240 vertices, and we have $2^{16} = 65536$ graphs to analyse. 
Luckily, these graphs reduce up to isomorphism to only 185. This is probably due to the extraordinary symmetries of both the colouring and the state that we have chosen for $P^8$. Our Sage program says that each of the 185 simplicial complexes generated by these graphs is connected and simply connected. Therefore the orbit is 1-legal. By Corollary \ref{1-legal:cor} the 
kernel $H$ of $f_*\colon \pi_1(M^8) \to \matZ$ is finitely presented.

\begin{rem}
We also checked (\ref{chi:eqn}). Both sides give (quite reassuringly) the same number $278528$. 
The formula (\ref{chi:eqn}) also explains a fact we alluded to in Remark \ref{analogy:rem}. Since $\chi(P^8) = 17/2$, the average contribution to the Euler characteristic of an ascending link is $17/2$, which is a relatively big number if compared to the Euler characteristic of the other polytopes considered above. Referring to Remark \ref{analogy:rem}, we note that it is certainly impossible that all the ascending links be $S^3$, since their individual contribution would be 1. The ascending links that we find with Sage are indeed quite complicated (they are typically some wedges of spheres of various dimensions $k\geq 2$), much more than those discovered with $P^3,\ldots, P^7$. They can be found in \cite{code}.
\end{rem}

\subsection{The restriction of $f$ to the cusps} \label{f:cusps:subsection}
In our analysis we have briefly mentioned the fact that when $n=3,4$ the chosen orbits satisfy the conditions of \cite[Theorem 15]{BM} and hence $f\colon M^n \to S^1$ can be smoothened to become a perfect smooth circle-valued Morse function (for $n=3$ this is a fibration). 

One may wonder if this is also the case when $n\geq 5$, and indeed this was our hope at the beginning of our study: it would be extremely interesting to find a fibration on an odd-dimensional hyperbolic manifold of dimension 5 or 7. We show that this is not the case, for any possible choice of initial state, a serious obstruction being the restriction of $f$ to the cusps of $M^n$. 

\begin{prop} \label{null:prop}
For $n=5, \ldots, 8$, there is some cusp $X\cong T^{n-1} \times [0, + \infty) \subset M^n$ where the restriction $f|_X$ is homotopic to a constant. This holds for any possible choice of a state for $P^n$.
\end{prop}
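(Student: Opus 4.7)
The restriction $f|_X$ factors through the retraction onto the cusp section $N = T^{n-1}$, and a map $T^{n-1} \to S^1$ is null-homotopic if and only if it induces the zero map on $\pi_1(T^{n-1})$. So I must exhibit a cusp $X$ (above some ideal vertex $v$ of $P^n$) on which all $n-1$ coordinate winding numbers of $f|_X$ vanish, independently of the state. By Proposition \ref{cubes:prop}, the dual cubulation of $N$ has $a_i \in \{2, 4\}$ cubes along the $i$-th coordinate circle, according as the two opposite $i$-facets of the link cube $C$ share a colour ($a_i = 2$) or carry two distinct colours ($a_i = 4$), and a generator of $\pi_1(N)$ in the $i$-th direction is represented by an $a_i$-edge loop in the dual cubulation.

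A direct computation using the orientation convention of Section \ref{diagonal:subsection} yields the key dichotomy for winding numbers. When $a_i = 4$, with the two opposite $i$-facets having distinct colours $j_1, j_2$, the generator loop alternates between edges dual to $F_1$ and $F_2$, and the two crossings of $F_1$-type edges occur between pairs of cubes whose $j_1$-coordinates have opposite parities; the orientation rule then makes the two contributions cancel, and similarly for $F_2$. The winding in direction $i$ is therefore $0$ for \emph{every} state. When $a_i = 2$, with the two opposite $i$-facets $F_1, F_2$ sharing a colour $j$, the generator loop has only two edges and the winding equals $\pm 2(s_{F_1} - s_{F_2})$ in terms of the initial statuses, which vanishes precisely when the two same-coloured opposite facets $F_1, F_2$ have matching initial status.

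This dichotomy immediately resolves the cases $n = 5, 7, 8$: the paper has already identified ``first type'' ideal vertices of $P^n$ whose link cube is coloured with the maximum $2(n-1)$ colours, so that every pair of opposite facets of $C$ carries two distinct colours. At such a $v$ the first case of the dichotomy applies in every coordinate direction, hence $f|_X$ is null-homotopic on every cusp above $v$, for any choice of state. The existence of such vertices (eight in $P^5$, $112$ in $P^7$, $1920$ in $P^8$) has been verified in the paper.

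The case $n = 6$ requires a finer combinatorial argument, since every ideal vertex of $P^6$ has a $9$-coloured $5$-cube link with exactly one pair of opposite same-coloured facets, whose winding need not vanish a priori. I will establish the bijection sending each ideal vertex $v$ to its unique same-coloured opposite pair in the link. The polytope $2_{21}$ has $27$ vertices of valence $16$, hence $(27 \cdot 10)/2 = 135$ pairs of non-adjacent vertices, while its $27$ octahedral facets contribute $5$ axes each for the same total $135$; $W(E_6)$ acts transitively on non-adjacent pairs (the stabiliser of a vertex is $W(D_5)$, which acts transitively on the five axes of the opposite octahedron), so by averaging each non-adjacent pair is an axis in exactly one octahedral facet. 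Restricting to same-coloured pairs produces the desired bijection between the $27$ ideal vertices of $P^6$ and the $27 = 9 \cdot 3$ same-coloured pairs. Given any state, pigeonhole applied to each of the nine triplets of same-coloured facets produces a pair with matching initial status; the bijection realises this pair at some ideal vertex $v$, whose cusp $X$ has vanishing winding in every direction by the dichotomy, and thus $f|_X$ is null-homotopic on it. The main obstacle is precisely this $n = 6$ case: no cusp of $P^6$ is ``automatically good'' in the way first-type cusps handle $n = 5, 7, 8$, and one must combine the bijection with the pigeonhole.
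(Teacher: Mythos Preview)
Your proof is correct and follows essentially the same route as the paper: for $n=5,7,8$ you use the ideal vertices whose link cube carries the maximal $2(n-1)$ colours (where the paper simply invokes Corollary~\ref{trivial:cor}, while you unpack the winding-number cancellation explicitly), and for $n=6$ you use the bijection between the 27 ideal vertices and the 27 same-coloured pairs together with pigeonhole on each triplet, exactly as the paper does. Your contribution beyond the paper is a self-contained proof of that bijection via the $W(E_6)$ action, where the paper just asserts it; one small slip there is that transitivity of $W(E_6)$ on non-adjacent \emph{pairs} requires the vertex stabiliser $W(D_5)$ to act transitively on the ten \emph{vertices} of the opposite $5$-octahedron, not merely on its five axes---but this stronger statement is equally true (e.g.\ $W(D_5)$ contains the double sign change $e_1\mapsto -e_1,\, e_2\mapsto -e_2$), so the argument goes through.
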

\begin{proof}
Let $s$ be any initial state for $P^n$.
In our discussion above we have said that when $n=5,7,8$ there is always some ideal vertex $v$ of $P^n$ whose link $C$ is a $(n-1)$-cube coloured with $2(n-1)$ distinct colours. Let $T\subset M^n$ be a torus section that lies above $C$. The restriction of $f$ to $T$ is determined by the restriction of the state $s$ of $P^n$ to $T$. No matter what the restriction of $s$ looks like, by Corollary \ref{trivial:cor} the restriction of $f$ to $T$ is homotopically trivial, and hence it is so on the cusp $X = T \times [0, + \infty)$ that it bounds.

The case $n=6$ is a bit more involved. When $n=6$, each of the 27 ideal vertices $v$ has a 9-coloured 5-cube link $C$. This implies that there exists exactly one pair of opposite facets sharing the same colour. In each of the 9 triplets of $P^6$, every pair is an opposite pair of facets of this kind, for some ideal vertex $v$ (we get $3\times 9 = 27$ pairs for $27$ ideal vertices). For any choice of a state, there will be one such pair with the same status, because the three statuses on a triple cannot be all distinct. By Proposition \ref{dichotomy:prop} the restriction of $f$ to this cusp is null-homotopic. 
\end{proof}

After writing a first draft of this paper we found a fibration for $M^5$ with a more elaborated construction that overcomes this problem, see \cite{IMM2}.

\subsection{The geometrically infinite coverings}
For every $n=3, \ldots, 8$, the kernel of $f_*\colon \pi_1(M^n) \to \pi_1(S^1) = \matZ$ is a normal subgroup $H\triangleleft \pi_1(M^n)$ of infinite index. We summarise our discoveries. 

\begin{teo}
The normal subgroup $H$ is finitely generated, and also finitely presented when $n=3,7,8$. The limit set of $H$ is the whole sphere $\partial \matH^n = S^{n-1}$.
\end{teo}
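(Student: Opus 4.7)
The finite generation and finite presentation assertions are essentially summaries of what the previous subsections establish. For each $n=3,\ldots,8$ we have exhibited an initial state on $P^n$ whose orbit is legal, so by Corollary \ref{legal:cor} the kernel $H=\ker f_*$ is finitely generated. For $n=3,7,8$ the orbit is moreover 1-legal, hence by Corollary \ref{1-legal:cor} the kernel is finitely presented. So the only new content in the statement is the identification of the limit set.

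To prove that the limit set $\Lambda(H)$ equals the whole sphere at infinity, I plan to invoke the following standard fact from the theory of Kleinian groups: if $\Gamma<\Isom(\matH^n)$ is a non-elementary discrete subgroup and $H\triangleleft \Gamma$ is a non-trivial normal subgroup, then $\Lambda(H)=\Lambda(\Gamma)$. The argument is soft: $\Lambda(H)$ is a closed $\Gamma$-invariant subset of $S^{n-1}$, because $\Lambda(H)$ is intrinsic to $H$ and $H$ is normal; but $\Lambda(\Gamma)$ is the minimal such non-empty closed invariant set whenever $\Gamma$ is non-elementary, so the two limit sets coincide.

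To apply this fact I need to verify its hypotheses for $\Gamma=\pi_1(M^n)$ and the normal subgroup $H$. Since $M^n$ has finite volume, $\Gamma$ is a lattice in $\Isom(\matH^n)$, so in particular it is non-elementary and satisfies $\Lambda(\Gamma)=S^{n-1}$. The subgroup $H$ is non-trivial: otherwise the quotient $\Gamma/H\cong\matZ$ would force $\Gamma\cong\matZ$, which is absurd for a lattice in $\Isom(\matH^n)$ with $n\geq 2$. The main step is thus just the citation of the normal subgroup fact, and I would give a reference (for instance to standard texts on Kleinian groups or geometrically infinite hyperbolic manifolds) rather than reprove it. I do not expect any serious obstacle here; the only small thing to check is non-triviality of $H$, which is immediate.
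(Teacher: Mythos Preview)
Your proposal is correct and matches the paper's approach exactly: the paper simply states that ``the limit set is the whole sphere because $H$ is normal in $\pi_1(M^n)$ and $M^n$ has finite volume,'' and the finiteness claims are indeed just summaries of the earlier subsections via Corollaries~\ref{legal:cor} and~\ref{1-legal:cor}. You have merely spelled out in more detail the standard fact about limit sets of normal subgroups that the paper invokes in a single clause.
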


The limit set is the whole sphere because $H$ is normal in $\pi_1(M^n)$ and $M^n$ has finite volume. In particular, the hyperbolic $n$-manifold
$$\widetilde M^n = \matH^n/H$$
that covers $M^n$ is geometrically infinite. The dimension $n=4$ was investigated in \cite{BM}. Here we are particularly interested in the dimensions $n=5,\ldots, 8$.

\begin{teo}
When $5\leq n \leq 8$, the hyperbolic manifold $\widetilde M^n$ has infinitely many toric cusps. In particular the Betti number $b_{n-1}(\widetilde M^n) = \infty$ is infinite and $\pi_1(\widetilde M^n)=H$ is not ${\rm F}_{n-1}$.
\end{teo}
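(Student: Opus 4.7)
The plan is to reduce everything to Proposition \ref{null:prop}, after which the theorem follows formally. For each $n$ with $5\leq n\leq 8$, that proposition supplies a toric cusp $X\cong T^{n-1}\times[0,+\infty)$ of $M^n$ on which the diagonal map $f$ is null-homotopic. First I would restate this algebraically: writing $i\colon X\hookrightarrow M^n$ for the inclusion and $H=\ker f_*$, the hypothesis becomes $i_*\pi_1(X)\subset H$.

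Next I would carry out the standard covering-space calculation for $p\colon \widetilde M^n\to M^n$. Since $H$ is normal in $\pi_1(M^n)$ and contains $i_*\pi_1(X)$, the components of $p^{-1}(X)$ are in bijection with the cosets $\pi_1(M^n)/H$, and the same inclusion $i_*\pi_1(X)\subset H$ forces each component to be mapped homeomorphically onto $X$ by $p$. Thus each component of $p^{-1}(X)$ is itself a cusp of $\widetilde M^n$ with cross-section $T^{n-1}$, and since $\pi_1(M^n)/H$ is infinite (it equals the nontrivial image $2\matZ\subset\matZ$ furnished by Proposition \ref{dichotomy:prop}), $\widetilde M^n$ has infinitely many toric cusps.

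The remaining homological assertions then follow as in Corollary \ref{alg:cor}. A hyperbolic manifold with infinitely many toric cusps is the interior of a compact-ends manifold whose infinitely many $T^{n-1}$ boundary components contribute independent fundamental classes to $H_{n-1}(\widetilde M^n;\matZ)$, giving $b_{n-1}(\widetilde M^n)=\infty$. Because $\widetilde M^n=\matH^n/H$ is aspherical, this equals $b_{n-1}(H)$; but any group of type ${\rm F}_{n-1}$ has finitely generated integral homology in all degrees $\leq n-1$, so $H$ cannot be of type ${\rm F}_{n-1}$.

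The only nontrivial ingredient in this plan is Proposition \ref{null:prop}, which is already proved by direct inspection of the chosen colourings at the ideal vertices of the polytopes $P^n$. Modulo that input, the present statement is a routine application of covering-space theory and of standard properties of the finiteness types ${\rm F}_m$, so no genuine obstacle remains.
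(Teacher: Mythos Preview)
Your proof is correct and follows the same route as the paper's own proof, which is the two-line argument: by Proposition \ref{null:prop} some toric cusp has $f$ null-homotopic on it, hence it lifts to infinitely many copies in $\widetilde M^n$; the Betti-number and ${\rm F}_{n-1}$ claims are then deduced exactly as in Corollary \ref{alg:cor}. You have simply spelled out the covering-space bijection and the homological consequences more explicitly than the paper does.
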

\begin{proof}
The restriction of $f$ to some cusp is null-homotopic by Proposition \ref{null:prop}. Therefore the cusp lifts to infinitely many copies of itself in $\widetilde M^n$.
\end{proof}

Recall that a group $H$ is of type ${\rm F}_m$ if there exists a $K(H,1)$ with finite $m$-skeleton \cite{BB}. If $H$ is ${\rm F}_m$, the Betti number $b_m(H)$ is obviously finite. 

\begin{cor}
When $n=7,8$, the fundamental group of the hyperbolic manifold $\widetilde M^n$ is finitely presented but not ${\rm F}_{n-1}$.
\end{cor}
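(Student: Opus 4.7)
The plan is essentially to bolt together the two theorems stated immediately above. For $n = 7, 8$ the first theorem supplies the ``finitely presented'' conclusion, and the second theorem supplies the failure of property ${\rm F}_{n-1}$ (which is in fact already in its statement). So the main work has already been done, and what remains is to make the Betti-number argument explicit.

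First, I would note that $\pi_1(M^n)$, being the fundamental group of a hyperbolic manifold, acts freely on $\matH^n$ and is torsion-free. Hence its subgroup $H = \ker f_*$ is torsion-free, the quotient $\widetilde M^n = \matH^n / H$ is a genuine hyperbolic manifold, and because $\matH^n$ is contractible $\widetilde M^n$ is a $K(H,1)$. In particular $b_k(H) = b_k(\widetilde M^n)$ for every $k$.

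Next, I would invoke the preceding theorem to conclude that $\widetilde M^n$ has infinitely many toric cusps for $5 \leq n \leq 8$. Compactifying by truncating the cusps turns $\widetilde M^n$ into the interior of a manifold with infinitely many compact $(n-1)$-dimensional boundary components. Each such boundary component is a closed orientable $(n-1)$-manifold and carries a non-trivial fundamental class, and these classes are linearly independent in $H_{n-1}(\widetilde M^n;\matQ)$ modulo the image of one overall relation (exactly as in the proof of Corollary \ref{alg:cor}). Consequently $b_{n-1}(\widetilde M^n) = \infty$, and hence $b_{n-1}(H) = \infty$.

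Finally, I would use the fact that any group of type ${\rm F}_{n-1}$ admits a classifying space with finite $(n-1)$-skeleton, and therefore has finite Betti number in every degree $\leq n-1$. Since $b_{n-1}(H) = \infty$, the group $H$ is not ${\rm F}_{n-1}$. Combined with the finite presentation of $H$ for $n = 7, 8$ supplied by the first theorem, this gives the corollary. There is no genuine obstacle here: all the technical content (finding 1-legal orbits, the cusp analysis producing infinitely many lifts, Bestvina--Brady theory) has been absorbed into the two theorems being cited.
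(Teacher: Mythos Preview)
Your proposal is correct and follows essentially the same approach as the paper: the corollary there is stated without proof, as an immediate juxtaposition of the two preceding theorems (finite presentation for $n=7,8$ from the first, failure of ${\rm F}_{n-1}$ already explicitly asserted in the second). Your write-up simply unpacks the Betti-number step that the second theorem already records, so there is nothing to add or correct.
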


\end{document}